\newfont{\bb}{msbm10 at 12pt}
\newfont{\tbb}{msbm10 at 8pt}
\def\R{\r}
\def\H{\mathbb{H}}
\def\R{\mathbb{R}}
\def\G{\mathbb{G}}
\def\L{\mathcal{L}}
\def\C{\mathcal{C}}
\def\bC{\mathbb{C}}
\numberwithin{equation} {section}
\begin{document}
\mbox{}\vspace{0.2cm}\mbox{}

\providecommand{\keywords}[1]
{
  \small	
  \textbf{\textit{Keywords---}} #1
}

\theoremstyle{plain}\newtheorem{theorem}{Theorem}[section]
\theoremstyle{plain}\newtheorem{lemma}[theorem]{Lemma}
\theoremstyle{plain}\newtheorem{proposition}[theorem]{Proposition}
\theoremstyle{plain}\newtheorem{definition}[theorem]{Definition}
\theoremstyle{plain}\newtheorem{corollary}[theorem]{Corollary}
\theoremstyle{plain}\newtheorem{remark}[theorem]{Remark}

\theoremstyle{plain}\newtheorem*{theorem*}{Theorem}
\theoremstyle{plain}\newtheorem*{main theorem}{Main Theorem} 
\theoremstyle{plain}\newtheorem*{lemma*}{Lemma}
\theoremstyle{plain}\newtheorem*{claim}{Claim}
\theoremstyle{plain}\newtheorem*{corollary*}{Corollary}

\theoremstyle{plain}\newtheorem*{theorem 3.4}{Theorem 3.4}
\theoremstyle{plain}\newtheorem*{theorem 2.3}{Theorem 2.10}
\theoremstyle{plain}\newtheorem*{corollary 3.1}{Theorem 3.4}
\theoremstyle{plain}\newtheorem*{theorem 2.12}{Theorem 2.11}
\theoremstyle{plain}\newtheorem*{theorem 4.1}{Theorem 4.2}
\theoremstyle{plain}\newtheorem*{corollary 4.5}{Corollary 4.8}

\theoremstyle{definition}\newtheorem{example}{Example}[section]
\theoremstyle{definition}\newtheorem{acknowledge}{Acknowledgment}
\theoremstyle{definition}\newtheorem{conjecture}{Conjecture}

\begin{center}
\rule{15cm}{1.5pt} \vspace{.4cm}

{\bf\Large On potentials for sub-Laplacians and geometric applications} 
\vskip .3cm

Shiguang Ma$\mbox{}^\dag$ and Jie Qing$\mbox{}^\ddag$
\vspace{0.3cm} 
\rule{15cm}{1.5pt}
\end{center}


\title{}

\begin{abstract} In this paper we extend the research on potential theory and its geometric applications 
from Euclidean spaces to homogeneous Carnot groups. We introduce a new approach to use the 
geometric completeness to estimate the Hausdorff dimension of polar sets of potentials of nonnegative Radon 
measures for sub-Laplacians in homogeneous Carnot groups. Our 
approach relies on inequalities that are analogous to the classic integral inequalities about Riesz potentials in Euclidean spaces. Our approach 
also uses extensions of some of geometric measure theory to homogeneous Carnot groups and the polar coordinates 
with horizontal radial curves constructed by Balogh and Tyson \cite{BT} for polarizable Carnot groups.  
As consequences, we develop applications of potentials for sub-Laplacians in CR geometry, 
quaternionic CR geometry, and octonionic CR geometry.
\end{abstract}

\subjclass[2010]{43A80, 35J70, 35H20, 35A08, 31C05, 31C15, 35B50, 22E60, 53C21, 31B35, 31B05, 31B15}
\keywords {homogeneous Carnot groups, sub-Laplacians, potentials for sub-Laplacians, 
Carnot-Carath\'{e}redory spaces, semi-simple groups of rank one, Hausdorff dimensins}

\maketitle



\section{Introduction}

In this paper we want to extend our study of potential theory and geometric applications 
beyond conformal geometry. There have been extensive studies
of potential theory in a broad class of spaces. On that, our major references for this paper are \cite{BLU} and \cite{BB}. 
We want to focus on potentials of Radon
measures for sub-Laplacians in homogeneous Carnot groups and applications in CR geometry, quaternionic CR geometry, and octonionic CR geometry in this paper.

As in \cite{BLU}, the potentials of Radon measures $\mu$ with the support in a bounded domain $D\subset \R^N$ for the canonical sub-Laplacian $\Delta_\G$ 
in a homogeneous Carnot group $\G = (\R^N, \circ, \delta_\lambda)$ is 
$$
R^{\mu, D}_{\Delta_\G} (x) = \int_D \Gamma_{\Delta_\G} (x^{-1}\circ y) d\mu(y),
$$
where $\Gamma_{\Delta_\G}$ is the fundamental solution for the canonical sub-Laplacian $\Delta_\G$ in $\G$. 

Instead of relying on the property of thin sets in potential theory observed in \cite{AH, Mi, MQ-g, MQ-2025, MQ-s}, 
we develop a completely new way to utilize geometric completeness to
derive estimates on the size of polar sets of potentials of nonnegative Radon measures. 
Our approach uses new integral inequalities about potentials for sub-Laplacians in homogeneous Carnot groups 
that are analogous to classic integral inequalities 
\begin{equation}\label{Equ:intr-riesz}
\int_\Omega \frac 1{|x-y|^{N-a} |y-p|^{N-b}} dy \leq  \frac C{|x-p|^{N-(a+b)}} \quad  \text{ when $a+b < N$},
\end{equation}
for a bounded domain $\Omega$ in Euclidean space $\R^N$ and $a, b\in (0, N)$ (cf. \cite[Proposition 4.2]{Au} and \cite{Gi}). Our analog
of \eqref{Equ:intr-riesz} in homogeneous Carnot groups is

\begin{theorem 2.12} Let $\G = (\R^N, \circ, \delta_\lambda)$ be a homogeneous Carnot group and $d_{\Delta_\G}$ be
the $\Delta_\G$-gauge in $\G$ in \eqref{Equ:L-gauge}. Then, given a bounded domain $\Omega\subset R^N$, for a constant $C>0$, 
\begin{equation}\label{Equ:intr-sub-Lap}
\int_\Omega \frac 1{d_{\Delta_\G}(x^{-1}\circ y)^{Q-a} d_{\Delta_\G}(p^{-1}\circ x)^{Q-b}} dx \leq  
\frac C{d_{\Delta_\G}(p^{-1}\circ y)^{Q-(a+b)}}
\end{equation}
provided $a, b\in (0, Q)$ and $a+b < Q$, where $Q$ is the homogeneous dimension of $\G$.
\end{theorem 2.12}

Based on a geometric measure theoretic argument and \eqref{Equ:intr-riesz}, 
we demonstrate our new idea in the alternative proof of the estimates on
the size of polar sets of Newtonian potentials of nonnegative Radon measures 
(cf. Theorem \ref{Thm:new-SY-potential} in Section \ref{Sect:potential-result}). In order to implement our new idea 
for homogeneous Carnot groups, we recall the polar coordinates with horizontal radial curves constructed in \cite{BT} and
\eqref{Equ:polar-integral} on polarizable Carnot groups (cf. \cite[Definition 2.12]{BT}).  We also extend
some of geometric measure theory in the context of homogeneous Carnot groups.  We then are able to obtain our main result on potentials. 

\begin{theorem 3.4}\label{Thm:potential-carnot-intr} Let $\G=(\R^N, \circ, \delta_\lambda)$ be a polarizable Carnot group (cf. \cite[Definition 2.12]{BT}).
And let $R^{\mu, D}_{\Delta_\G}$ be the potential of a nonnegative Radon measure $\mu$ supported in $D\subset \R^N$ for the sub-Laplacian 
$\Delta_\G$ in $\G$.  Suppose that  $S$ is a compact subset in $D$ and that
\begin{equation}\label{Equ:complete-carnot-intr}
\lim_{t\to 0} R^{\mu, D}_{\Delta_\G}(\gamma(t)) = \infty \text{ and } \int_0^\delta (R^{\mu, D}_{\Delta_\G} (\gamma(t)))^\frac 2{Q-2} dt = \infty,
\end{equation}
for any curve $\gamma(t): [0, \delta]\to \R^N$ with $\gamma(0)\in S$ and $\gamma\in C[0, \delta]\bigcap C^1(0, \delta]$
that is horizontal with respect to the sub-Riemannian structure of $\G$ and
some $\delta >0$, where $Q$ is the homogeneous dimension of $\G$. Then 
\begin{equation}\label{Equ:intr-dim-p}
dim_{\mathcal{H}_{\Delta_\G}} (S) \leq \frac {Q-2}2.
\end{equation}
\end{theorem 3.4}

The Hausdorff measures $\mathcal{H}^s_{\Delta_\G}$ and the Hausdorff dimensions $dim_{\mathcal{H}_{\Delta_\G}}$ 
are with respect to the $\Delta_\G$-gauge $d_{\Delta_\G}$ in $\G$ in \eqref{Equ:L-gauge} discussed in \cite[Chapter 13]{BLU}. Clearly Theorem
\ref{Thm:potential-carnot} applies to $\Delta_\G$-superharmonic functions in the light of Theorem \ref{Thm:Riesz-l-superh} (cf. 
\cite[Theorem 9.4.4]{BLU}). It is important for us  that H-type groups (cf. \cite[Definition 5.1]{BT}  and \cite{Kap}) 
are polarizable by \cite[Proposition 5.6]{BT}, and that
H-type groups include Heisenberg groups, quaternionic Heisenberg groups, and the octonionic Heisenberg group, that are the nilpotent part of the
Iwasawa decomposition of rank one groups $SU(n+1, 1)$, $Sp(n+1, 1)$ and $\mathbb{F}_{4(-20)}$ (cf. \cite{Kap, BT, CDKR-91}). 

For convenience, from now on,
we always use $\mathsf{N}$ to stand for a H-type group that is the nilpotent part of the Iwasawa decomposition of a rank one group $\mathfrak{G}$,
where $\mathfrak{G}$ stands for a rank one group among $SU(n+1, 1)$, $Sp(n+1, 1)$ and $\mathbb{F}_{4(-20)}$. As applications of Theorem \ref{Thm:potential-carnot} to $\Delta_\G$-superharmonic functions in CR geometry, quaternionic CR geometry, 
and octonionic CR geometry in the light of \eqref{Equ:CR-yamabe}, \eqref{Equ:QC-yamabe}, and \eqref{Equ:OC-yamabe} respectively, we have

\begin{theorem 4.1} Assume $\mathsf{N}$ 
is a H-type group that is the nilpotent part of the Iwasawa decomposition of a rank one group $\mathfrak{G}$ among
$SU(n+1, 1)$, $Sp(n+1, 1)$ and $\mathbb{F}_{4(-20)}$. 
Let $S$ be a compact subset of a bounded and open subset $\Omega$ in $\mathsf{N}$. 
And let $\theta = u^\frac 4{Q-2}\theta_0$ be a 
conformal contact form on $\Omega\setminus S$, where $\theta_0$ is the standard contact form on $\mathsf{N}$ and $Q$ is the homogeneous 
dimension of $\mathsf{N}$. 
Suppose that the sub-Riemannian metric associated with $\theta$ is 
geodesically complete near $S$. Then
\begin{equation}\label{Equ:intr-dim-CR}
dim_{\mathcal{H}_{\Delta_{\mathsf{N}}}} (S) \leq \frac {Q-2}2
\end{equation}
when the scalar curvature $R_\theta$ of the conformal contact form $\theta$ is nonnegative.
\end{theorem 4.1}

The scalar curvature of a contact structure with respect to a H-type group $\mathsf{N}$ in Theorem \ref{Thm:app-CR}
is recalled in Section \ref{Subsect:H-type} (cf. \cite{JL, Bq, WW, IMV, SW}). 
Theorem \ref{Thm:app-CR} is an extension of our previous work \cite[Theorem 1.3]{MQ-2025} in conformal geometry. In fact
\eqref{Equ:intr-dim-CR} still holds when the scalar curvature $R_\theta$ is not necessarily nonnegative 
everywhere (please see Theorem \ref{Thm:app-CR-2.0} in Section \ref{Sect:geo-app}). Theorem \ref{Thm:app-CR} also holds for domains in manifolds with the corresponding structure,
since our approach is all local in nature. Theorem \ref{Thm:app-CR-2.0} for CR case with respect to Heisenberg groups 
has been announced in \cite{CY}.
The number $\frac {Q-2}2$ in \eqref{Equ:intr-dim-CR} is sharp evidently by \cite{GMM}.

Motivated by \cite{Nay-c}, in \cite{Nay-h, SW-n, WW}, it was shown that the scalar curvature of the Carnot-Carath\'{e}odory metrics from Nayatani's
construction on $\Omega(\Gamma)/\Gamma$ for a convex cocompact subgroup $\Gamma$ of a rank one group $\mathfrak{G}$
is positive if and only if  $dim_\mathcal{H} (L(\Gamma)) < \frac {Q-2}2$. Note that
Patterson-Sullivan Theorem has been generalized to geometrically finite subgroups of rank one groups $\mathfrak{G}$
(cf. \cite{Cor, CI, DK}) and therefore the Poincar\'{e} critical exponent and the Hausdorff dimension of the limit point
set are the same here. We also note that Theorem \ref{Thm:app-CR}, which relies on potential theory and local analysis, 
implies the following corollary that extends to all 
rank one groups $\mathfrak{G}$ from \cite[Proposition 2.4]{SY} in conformal groups.

\begin{corollary 4.5} Let $\mathsf{N}$  be the H-type group that is the nilpotent part of the Iwasawa 
decomposition of a rank one group $\mathfrak{G}$ as in Theorem \ref{Thm:app-CR}. And let 
$\Gamma$ be a convex cocompact subgroup of $\mathfrak{G}$. 
Suppose that the Yamabe type constant $\lambda(\Omega(\Gamma)/\Gamma) \geq 0$ in \eqref{Equ:yamabe-type}. Then
\begin{equation}
dim_{\mathcal{H}}(L(\Gamma)) \leq \frac {Q-2}2,
\end{equation}
where $Q$ is the homogeneous dimension of the group $\mathsf{N}$.
\end{corollary 4.5}

The organization of this paper is as follows: In Section \ref{Sect:homo-carnot}, we introduce potentials of Radon
measures for sub-Laplacians in homogeneous Carnot groups 
based on \cite{BLU}. Then we prove our new integral inequalities \eqref{Equ:intr-sub-Lap}.
In Section \ref{Sect:potential-result}, we first use our new idea and \eqref{Equ:intr-riesz} to present the alternative proof of 
the estimates of the size of polar sets of Newtonian potentials of nonnegative Radon measures in Euclidean spaces. Then we review 
the polar coordinates in \cite{BT} and develop tools in geometric measure theory before proving our main theorem on potentials 
with the new idea and \eqref{Equ:intr-sub-Lap}. 
Finally in Section \ref{Sect:geo-app}, we apply Theorem \ref{Thm:potential-carnot} to prove our geometric applications on the size of
singularities of the conformal contact structure of nonnegative scalar curvature in relevant CR geometry associated with
the rank one groups $\mathfrak{G}$.  


\section{On potentials for sub-Laplacians in homogeneous Carnot groups} \label{Sect:homo-carnot}

In this section we follow the book \cite{BLU} to introduce potentials of Radon measures 
for sub-Laplacians in homogeneous Carnot groups. Our main goal in this section is to present our new integral inequalities
about potentials for sub-Laplacians in homogeneous Carnot groups. 


\subsection{Basic notations and definitions} \label{SubSect:definition} Let us start with the definition of homogeneous Carnot groups.

\begin{definition} \label{Def:hcg} (\cite[Definition 1.4.1]{BLU}) We say a Lie group on $\mathbb{R}^N$, $\G= (\mathbb{R}^N,   \circ, \delta_\lambda)$, 
is a homogeneous Carnot group if the following properties hold
\begin{enumerate}
\item $\mathbb{R}^N$ can be split as $\mathbb{R}^{N_1}\times\mathbb{R}^{N_2}\cdots\times\mathbb{R}^{N_r}$, and the dilation
$$
\delta_\lambda (x^{(1)}, x^{(2)}, \cdots x^{(r)}) = (\lambda x^{(1)}, \lambda^2 x^{(2)}, \cdots \lambda^r x^{(r)}), \text{ where $x^{(i)}\in \mathbb{R}^{N_i}$,}
$$
is an automorphism of the group $\mathbb{G}$ for all $\lambda > 0$. Hence the group with this dilation $\delta_\lambda$ 
is a homogeneous Lie group on $\R^N$ (cf. \cite[Definition 1.3.1]{BLU}). 
\item Let $Z_1, Z_2, \cdots, Z_{N_1}$ be the left invariant vector fields on $\mathbb{G}$ such that
$$
Z_i (0) = \frac {\partial}{\partial x_i}|_{0} \text{ for $i= 1, 2, \cdots N_1$}.
$$
Then the rank of the Lie algebra generated by $Z_1, Z_2, \cdots Z_{N_1}$ is $N$. 
\end{enumerate}
The group together with the dilation $(\mathbb{R}^N, \circ, \delta_\lambda)$ is said to be a homogeneous Carnot group of step $r$ and $N_1$ generators. And
the vector fields $Z_1, Z_2, \cdots Z_{N_1}$ are called the Jacobian generators.
\end{definition}

By \cite[Proposition 1.3.21]{BLU}, the standard Lebesgue measure $m$ 
on $\mathbb{R}^N$ is the Haar measure. And, for a measurable subset $E$,
\begin{equation}\label{Equ:Lebesgue-scaling}
m(\delta_\lambda (E))= \lambda^Qm(E),
\end{equation}
where $Q= \sum_{i=1}^r iN_i$
is called the homogeneous dimension of the homogeneous Carnot group $\mathbb{G}$ (cf. \cite[Proposition 2.2.8]{BLU}). 
For our purpose we assume $Q>2$ throughout this paper. 
Next important objects for us are sub-Laplacians in homogeneous Carnot groups.  

\begin{definition}\label{Def:sub-L} (\cite[Definition 1.5.1]{BLU})  Let $Z_1, Z_2, \cdots, Z_{N_1}$ be the Jacobian generators of a homogeneous Carnot group
$\mathbb{G} = (\mathbb{R}^N, \circ, \delta_\lambda)$ as in Definition \ref{Def:hcg}. The second order differential operator
$$
\Delta_{\mathbb{G}} = \sum_{i=1}^{N_1} Z_i^2
$$
is called the canonical sub-Laplacian (or the sub-Laplacian if no confusions) on $\mathbb{G}$. 
\end{definition}

Remarkably,  the rank condition in H\"{o}rmander Theorem (cf. \cite[Theorem 1 in Preface]{BLU}) in analysis
is exactly the assumption in \cite[Theorem 19.1.3]{BLU} by
Carath\'{e}redory, Chow, and Rashevsky, in the theory of sub-Riemannian geometry, that introduces the 
Carnot-Carath\'{e}redory distance $d_{\G}$ (or the control distance) for homogeneous Carnot groups.  
A homogeneous Carnot group $\mathbb{G}$ together with the control distance $d_\G$ 
is a metric space and called a Carnot-Carath\'{e}redory space according to \cite[A.6]{BB}. Next important objects for us are fundamental solutions and gauge functions. 

\begin{definition}\label{Def:fund-sol} (\cite[Definition 5.3.1]{BLU})
Let $\mathbb{G} = (\mathbb{R}^N, \circ, \delta_\lambda)$ be a homogeneous Carnot group. A function
$\Gamma_{\Delta_\G}: \mathbb{R}^N\setminus \{0\}\to \mathbb{R}$ is said to be a fundamental solution for $\Delta_\G$ if:
\begin{itemize}
\item $\Gamma_{\Delta_\G}$ is smooth away from the origin;
\item $\Gamma_{\Delta_\G} \in L^1_{\text{loc}}(\mathbb{R}^N)$ and $\Gamma_{\Delta_\G} (x)\to 0$ as $x\to \infty$;
\item $-\Delta_\G \Gamma_{\Delta_\G} = \text{Delta}_0,$ where $\text{Delta}_0$ is the Dirac measure supported at the origin $0$. More explicitly,
$
\int_{\mathbb{R}^N} \Gamma_{\Delta_\G} \Delta_\G \phi = -\phi(0), \forall \phi\in C^\infty_0(\mathbb{R}^N).
$
\end{itemize}
\end{definition} 

It turns out, by \cite[Theorem 5.3.2 and Proposition 5.3.10]{BLU}, there always exists a unique fundamental solution for $\Delta_\G$, 
which is positive with the homogeneity 
\begin{equation}\label{Equ:Gamma-scale}
\Gamma_{\Delta_\G} (\delta_\lambda (x)) = \lambda^{2 - Q}\Gamma_{\Delta_\G} (x)
\end{equation}
(cf. \cite[Proposition 5.3.12 and Proposition 5.3.13]{BLU}),
where $Q$ is the homogeneous dimension of $\mathbb{G}$. 
The $\Delta_\G$-gauge
\begin{equation}\label{Equ:L-gauge}
d_{\Delta_\G} (x, y)= d_{\Delta_\G} (y^{-1}\circ x) = \left\{\aligned \Gamma_{\Delta_\G} (y^{-1}\circ x)^{\frac 1{2-Q}} & \text{ when $x\neq y$}\\
0 \quad & \text{ when $x=y$}\endaligned\right.
\end{equation}
is a symmetric homogeneous norm on $\mathbb{G}$ (cf. \cite[Definition 5.4.1]{BLU}). It is good to be reminded that all homogeneous norms 
including $d_\G$ and $d_{\Delta_\G}$ in $\G$ are equivalent according to \cite[Proposition 5.1.4]{BLU}.

\subsection{$\Delta_\G$-superharmonic functions and potentials of nonnegative Radon measures for sub-Laplacians}\label{SubSec:superharmonic} 
Let us first recall discussions in \cite[Section 7.2]{BLU} and the following definition of $\mathcal{L}$-superharmonic functions.

\begin{definition}\label{Def:l-superharm} (\cite[Definition 7.2.2 and Remark 7.2.3]{BLU}) Let $\mathbb{G}= (\mathbb{R}^N, \circ, \delta_\lambda)$ be a homogeneous Carnot group.
Let $\Omega$ be an open subset of $\mathbb{R}^N$. A function 
$$
u: \Omega \to (-\infty, \infty]
$$
is said to be $\Delta_\G$-superharmonic in $\Omega$ if 
\begin{itemize}
\item $u$ is lower semi-continuous and $\{x\in \Omega: u(x) <\infty\}$ is a dense subset of $\Omega$;
\item for every $\Delta_\G$-regular open set $V$ (cf. \cite[Section 7.2]{BLU}) with the closure $\bar V\subset \Omega$ and for every $x\in V$,
$$
H^V_\phi (x) \leq u(x)
$$
for every continuous  function $\phi\in C(\partial V, \mathbb{R})$ such that $\phi \leq u$ on $\partial V$, where $H^V_\phi$ is the unique solution to
$$
- \Delta_\G H^V_\phi = 0 \text{ in $V$ and } H^V_\phi = \phi \text{ on $\partial V$},
$$
and the existence and uniqueness of $H^V_\phi$ is because the open set $V$ is assumed to be  $\Delta_\G$-regular (cf. \cite[Section 7.2]{BLU}). 
\end{itemize}
\end{definition}

Next let us introduce the Riesz representation of $\Delta_\G$-superharmonic functions and potentials for the sub-Laplacian $\Delta_\G$ in homogeneous
Carnot groups.

\begin{theorem} \label{Thm:Riesz-l-superh} (\cite[Theorem 9.4.4]{BLU})  Let $\mathbb{G}= (\mathbb{R}^N, \circ, \delta_\lambda)$ be a 
homogeneous Carnot group. Suppose $\Omega\subset \R^N$ and $u$ is a $\Delta_\G$-superharmonic function on $\Omega$. Then, there is a nonnegative Radon
measure $\mu$ in $\Omega$, which is called the $\Delta_\G$-Riesz measure for $u$ (cf. \cite[Definition 9.4.1]{BLU}), and for all bounded open subset 
$\Omega'$ whose closure $\overline{\Omega'} \subset \Omega$ 
\begin{equation}\label{Equ:Riesz-rep}
u(x) = \int_{\Omega'} \Gamma_{\Delta_\G} (y^{-1}\circ x)d\mu(y) + h(x)
\end{equation}
for some  function $h$ which is $\Delta_\G$-harmonic in $\Omega'$, where $\Gamma_{\Delta_\G}$ is the fundamental solution for $\Delta_\G$.
\end{theorem}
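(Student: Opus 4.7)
The plan is to identify the Riesz measure $\mu$ with the distribution $-\Delta_\G u$, to show that this distribution is actually a nonnegative Radon measure, and then to isolate the potential piece via the fundamental solution $\Gamma_{\Delta_\G}$. First I would establish that any $\Delta_\G$-superharmonic $u$ is locally integrable on $\Omega$. Lower semi-continuity gives a uniform lower bound on compacta, while comparison of $u$ against continuous boundary data on small $\Delta_\G$-gauge balls that are $\Delta_\G$-regular (their existence coming from the H\"ormander theory in \cite[Chapter 7]{BLU}) produces finite upper $L^1$ bounds by exploiting the density of $\{u < \infty\}$. Hence $u \in L^1_{\mathrm{loc}}(\Omega)$, and the distribution $\Delta_\G u$ is well defined.

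Second, I would verify that $-\Delta_\G u \geq 0$ as a distribution on $\Omega$. For nonnegative $\phi\in C^\infty_c(\Omega)$, I would approximate $u$ monotonically from below by a sequence of continuous superharmonic functions $u_k$, obtained either as Perron envelopes on exhausting regular sets or by smoothing through the $\Delta_\G$-heat semigroup, whose mean-value properties are recorded in \cite[Chapters 5 and 7]{BLU}. For each $u_k$ the sub-mean inequality combined with integration by parts yields $\int u_k \Delta_\G \phi \leq 0$, and monotone convergence carries the inequality to $u$. By the classical theorem that nonnegative distributions are Radon measures, we obtain a unique nonnegative Radon measure $\mu$ on $\Omega$ with $-\Delta_\G u = \mu$ distributionally. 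Fixing any $\Omega'$ with $\overline{\Omega'}\subset \Omega$ and setting
$$
v(x) = \int_{\Omega'} \Gamma_{\Delta_\G}(y^{-1}\circ x)\, d\mu(y),
$$
Fubini together with the defining property of the fundamental solution from Definition \ref{Def:fund-sol} gives $-\Delta_\G v = \mu|_{\Omega'}$ on $\Omega'$. Therefore $h := u - v$ satisfies $\Delta_\G h = 0$ distributionally on $\Omega'$, and hypoellipticity of $\Delta_\G$ (H\"ormander's theorem) upgrades $h$ to a smooth classical $\Delta_\G$-harmonic function, completing the decomposition \eqref{Equ:Riesz-rep}.

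The main obstacle is the second step: extracting the distributional inequality $\int u \Delta_\G \phi \leq 0$ from the axiomatic notion of superharmonicity in Definition \ref{Def:l-superharm}, in which $u$ is only lower semi-continuous and may be infinite on a meagre set. The sub-mean property is phrased implicitly through the harmonic-majorant operator $H^V_\phi$ on $\Delta_\G$-regular sets $V$, so one must construct a constructive family of $\Delta_\G$-mean operators on gauge balls that approximate $u$ monotonically by continuous superharmonic functions and that satisfy explicit integration-by-parts identities against $C^\infty_c$ test functions. This is standard in Euclidean potential theory through Riesz kernel convolution, but in the sub-Riemannian setting it demands the heat semigroup of $\Delta_\G$ together with its regularizing behavior and commutation properties with the left-invariant vector fields $Z_i$, rather than merely the existence of a fundamental solution.
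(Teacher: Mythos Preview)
The paper does not supply its own proof of this statement: Theorem \ref{Thm:Riesz-l-superh} is quoted verbatim from \cite[Theorem 9.4.4]{BLU} and used as a black box. So there is no in-paper argument to compare your proposal against.

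That said, your outline is essentially the standard route taken in \cite{BLU}: establish $u\in L^1_{\mathrm{loc}}$, show $-\Delta_\G u\geq 0$ in $\mathcal{D}'(\Omega)$ so that it is represented by a nonnegative Radon measure $\mu$, subtract the potential $\int_{\Omega'}\Gamma_{\Delta_\G}(y^{-1}\circ x)\,d\mu(y)$, and invoke hypoellipticity to see that the remainder is a genuine $\Delta_\G$-harmonic function. The one place where your sketch diverges from the treatment in \cite{BLU} is the approximation step: rather than the heat semigroup, \cite[Chapters 5 and 8]{BLU} builds explicit solid mean-value operators $\mathcal{M}_r$ on $\Delta_\G$-gauge balls, proves that $r\mapsto \mathcal{M}_r u$ is monotone and that $\mathcal{M}_r u\nearrow u$ as $r\downarrow 0$ for superharmonic $u$, and uses these averages directly to pass from the axiomatic sub-mean property to the distributional inequality. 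This is more elementary than the heat-kernel route and avoids the commutation issues you flag in your last paragraph; the integration-by-parts identity you need is then just the representation formula for $\Delta_\G$ in terms of surface and solid means on gauge balls (\cite[Theorem 5.5.4 and Theorem 5.6.1]{BLU}). Your plan is correct in spirit, but if you want to fill in the details, the mean-value-operator machinery is the cleaner tool here.
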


At this point we are ready to introduce potentials of Radon measures for sub-Laplacians in homogeneous Carnot groups.

\begin{definition}\label{Def:potential} Let $\mathbb{G}= (\mathbb{R}^N, \circ, \delta_\lambda)$ be a 
homogeneous Carnot group. We define the potential of a Radon measure 
for the sub-Laplacian $\Delta_\G$ as
\begin{equation}\label{Equ:potential}
R_\G^{\mu, D} (x) = \int_D \Gamma_{\Delta_\G} (y^{-1}\circ x)d\mu(y)
\end{equation}
for all $x\in D\subset \R^N$, where $\mu$ is a Radon measure on $\R^N$ and the support of $\mu$ is in $D$. 
\end{definition}


\subsection{New integral inequalities about potentials for sub-Laplacians}\label{Subsect:integral}
In recent works \cite{MQ-g, MQ-2025, MQ-s} on applications of potential theory in conformal geometry, the idea, started in \cite{Hu, AH}, 
relies vitally on the feature of thin sets in potential theories in Euclidean spaces, which is, for example, \cite[Theorem 1.1]{AH} , 
\cite[Theorem 8.1 in Chapter 2]{Mi}, and \cite[Theorem 1.2]{MQ-2025}.  
In this paper, we are taking a completely different approach. 
It turns out one of the key analytic ingredients of our new approach in geometric applications of potential theory 
is the integral inequalities \eqref{Equ:riesz-like}, which are extensions of the classic integral inequalities about Riesz potential in Euclidean spaces.  

\begin{proposition} \label{Prop:riesz-integral} (\cite[Proposition 4.12]{Au})
Let $\Omega\subset \R^N$ be a bounded domain. Then, for a constant $C>0$, 
\begin{equation}\label{Equ:ine-riesz}
\int_\Omega \frac 1{|x-y|^{N-a} |x-p|^{N-b}} dx \leq  \frac C{|y-p|^{N-(a+b)}}
\end{equation}
provided $a, b\in (0, N)$ and $a+b < N$.
\end{proposition}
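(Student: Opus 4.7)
The plan is to carry out the standard dyadic decomposition of $\Omega$ around the two poles $y$ and $p$. Set $r = |y-p|$; we may assume $r>0$ since otherwise both sides agree trivially (and the right-hand side absorbs the singularity by $a+b<N$). I will split $\R^N$ (and then intersect with $\Omega$) into the three regions
\begin{equation*}
A_1 = \{x : |x-y| \le r/2\}, \quad A_2 = \{x : |x-p| \le r/2\}, \quad A_3 = \R^N \setminus (A_1 \cup A_2),
\end{equation*}
and estimate the integral on each piece separately.

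On $A_1$ the triangle inequality yields $|x-p| \ge |y-p| - |x-y| \ge r/2$, so the second factor is uniformly controlled by $(2/r)^{N-b}$, and what remains is
\begin{equation*}
\int_{|x-y|\le r/2} \frac{dx}{|x-y|^{N-a}} = c_N \int_0^{r/2} \rho^{a-1}\,d\rho = C r^a,
\end{equation*}
since $a>0$. This gives a bound of $C r^{a+b-N}$, which is the desired right-hand side. By the symmetry $y \leftrightarrow p$, $a \leftrightarrow b$, the integral over $A_2$ satisfies the same bound.

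For the exterior piece $A_3$, the key geometric observation is that $|x-y|$ and $|x-p|$ are comparable: if $|x-p|\ge r/2$, then $|x-y| \le |x-p| + r \le 3|x-p|$, and symmetrically. Hence on $A_3$,
\begin{equation*}
\frac{1}{|x-y|^{N-a}|x-p|^{N-b}} \le \frac{C}{|x-p|^{2N-(a+b)}}.
\end{equation*}
Using the boundedness of $\Omega$ (so that $|x-p|\le D$ for some $D$ depending on $\Omega$ and $p$), polar coordinates centered at $p$ give
\begin{equation*}
\int_{A_3 \cap \Omega} \frac{dx}{|x-p|^{2N-(a+b)}} \le C \int_{r/2}^{D} \rho^{(a+b)-N-1}\,d\rho \le C' r^{(a+b)-N},
\end{equation*}
where the last step uses precisely the hypothesis $a+b<N$ to extract the correct power of $r$ from the lower limit of integration (the upper limit contributes a bounded constant that may be absorbed into $C$). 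Summing the three contributions produces the claimed bound $C|y-p|^{(a+b)-N}$.

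The only subtle point is making sure the constant $C$ depends only on $N$, $a$, $b$, and the diameter of $\Omega$ (not on the particular positions of $y,p$); this is automatic from the argument above, since every estimate is made after reducing to the single scale $r=|y-p|$. If one wants to avoid case analysis on whether $y,p\in \Omega$, one can enlarge $\Omega$ to a ball of diameter $2\,\mathrm{diam}(\Omega \cup \{y,p\})$ without affecting the conclusion. I do not anticipate any serious obstacle; the argument is essentially the standard Hardy--Littlewood--Sobolev scale-analysis computation.
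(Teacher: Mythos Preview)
Your proof is correct and follows essentially the same approach as the paper. The paper does not prove Proposition~\ref{Prop:riesz-integral} itself (it is cited from Aubin), but it proves the Carnot-group analog (Proposition~\ref{Prop:new-integral}) by the same dyadic splitting around the two poles at scale $|y-p|$: a small ball near $p$, a small ball near $y$, and the complement. The only cosmetic difference is that the paper organizes the complement into two pieces (an intermediate annulus handled by a crude volume bound and a far region handled with an auxiliary $\epsilon>0$), whereas you treat the entire exterior $A_3$ in one stroke via the comparability $|x-y|\asymp |x-p|$; your version is in fact slightly cleaner, and the bounded-domain hypothesis is not even needed in your $A_3$ estimate since $\int_{r/2}^\infty \rho^{(a+b)-N-1}\,d\rho$ already converges.
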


\eqref{Equ:ine-riesz} was first proved by Giraud in \cite{Gi}.
Our analog of Proposition \ref{Prop:riesz-integral} in homogeneous Carnot groups is

 \begin{proposition} \label{Prop:new-integral} Let $\G = (\R^N, \circ, \delta_\lambda)$ be a homogeneous Carnot group
and $|\cdot|_\G$ be the symmetric homogeneous norm corresponding to the Carnot-Carath\'{e}redory 
distance $d_\G$ on $\G$.  Then, given a bounded domain $\Omega\subset R^N$, for a constant $C>0$, 
\begin{equation}\label{Equ:riesz-like}
\int_\Omega \frac 1{|x^{-1}\circ y|_\G^{Q-a} |p^{-1}\circ x|_\G^{Q-b}} dx \leq  \frac C{|p^{-1}\circ y|_\G^{Q-(a+b)}},
\end{equation}
provided $a, b\in (0, Q)$ and $a+b < Q$, where $Q$ is the homogeneous dimension of $\G$.
\end{proposition}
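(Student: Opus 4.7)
The plan is to mimic the classical Euclidean proof of Giraud's inequality (Proposition \ref{Prop:riesz-integral}), replacing the usual triangle inequality with the quasi-triangle inequality for a homogeneous norm, and replacing the Euclidean scaling of Lebesgue measure with the $Q$-homogeneity in \eqref{Equ:Lebesgue-scaling}.

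First I would reduce to the case $p = 0$ by the left translation $\tilde x = p^{-1}\circ x$. Since the standard Lebesgue measure is the (bi-invariant) Haar measure on the nilpotent group $\G$, this change of variables preserves $dx$, and \eqref{Equ:riesz-like} becomes the bound
\[
\int_{p^{-1}\circ \Omega} \frac{d\tilde x}{|\tilde x^{-1}\circ \tilde y|_\G^{Q-a}\,|\tilde x|_\G^{Q-b}} \leq \frac{C}{|\tilde y|_\G^{Q-(a+b)}}
\]
with $\tilde y = p^{-1}\circ y$. Writing $\rho = |\tilde y|_\G$ and fixing the quasi-triangle constant $c_\G \geq 1$ of $|\cdot|_\G$ (a standard fact for homogeneous norms on $\G$), I decompose the domain into three regions: $A_1 = \{|\tilde x|_\G \leq \rho/(2c_\G)\}$ (near the singularity at the origin), $A_2 = \{|\tilde x^{-1}\circ \tilde y|_\G \leq \rho/(2c_\G)\}$ (near the singularity at $\tilde y$), and $A_3$ its complement inside $p^{-1}\circ \Omega$.

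On $A_1$, the quasi-triangle inequality $|\tilde y|_\G \leq c_\G(|\tilde x|_\G + |\tilde x^{-1}\circ \tilde y|_\G)$ forces $|\tilde x^{-1}\circ \tilde y|_\G \geq \rho/(2c_\G)$, so that factor pulls out of the integral. What remains is $\int_{|\tilde x|_\G \leq \rho/(2c_\G)} |\tilde x|_\G^{b-Q} d\tilde x$, which by the change of variables $\tilde x = \delta_\rho(w)$ and the scaling rule \eqref{Equ:Lebesgue-scaling} equals $C\rho^b$, yielding the desired bound $C\rho^{-(Q-a-b)}$. Region $A_2$ is handled by the left translation $\tilde x = \tilde y \circ w$, which is measure-preserving and turns the condition into $|w|_\G \leq \rho/(2c_\G)$ while forcing $|\tilde y \circ w|_\G \geq \rho/(2c_\G)$; the roles of $a$ and $b$ swap and the same bound results. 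On $A_3$ I would split further according to whether $|\tilde x^{-1}\circ \tilde y|_\G \leq 2c_\G\rho$: in the first piece both factors are $\sim \rho$ and the set has measure $\leq C\rho^Q$; in the second, the quasi-triangle inequality gives $|\tilde x|_\G$ comparable to $|\tilde x^{-1}\circ \tilde y|_\G$, so the integrand is bounded by $C|\tilde x|_\G^{a+b-2Q}$, and integration over the annulus $\rho \leq |\tilde x|_\G \leq R_0$ (where $p^{-1}\circ \Omega$ sits inside $\{|\tilde x|_\G\leq R_0\}$) uses the hypothesis $a+b<Q$ to produce at most $C\rho^{-(Q-a-b)}$.

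The only real subtlety, rather than a serious obstacle, is that $|\cdot|_\G$ satisfies the triangle inequality only up to the multiplicative constant $c_\G$, so every separation-of-singularities argument must absorb factors of $c_\G$; the threshold $\rho/(2c_\G)$ is chosen once and for all so that the three regions achieve the desired lower bounds, after which the estimates proceed exactly as in the Euclidean proof. All annular integrals reduce via $\delta_\lambda$ and \eqref{Equ:Lebesgue-scaling} to the elementary identity $\int_{r_1\leq |\tilde x|_\G\leq r_2} |\tilde x|_\G^{-s}\,d\tilde x = C(r_2^{Q-s}-r_1^{Q-s})$ for $s\neq Q$, which is the only measure-theoretic input needed beyond left-invariance.
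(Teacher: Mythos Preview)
Your proposal is correct and follows essentially the same route as the paper: both reduce by a translation, split the domain into a region near each singularity, an intermediate annulus, and a far region, and handle each piece with the (quasi-)triangle inequality together with the dyadic ball estimates of Lemma~\ref{Lem:lebesgue-2}. The only cosmetic differences are that the paper sets $y=0$ rather than $p=0$, organizes the four pieces as $I, II_1, II_2, III$ centered at $p$, and uses the genuine triangle inequality for the Carnot--Carath\'eodory distance (so your constant $c_\G$ can simply be taken equal to $1$ here).
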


The proof of Proposition \ref{Prop:new-integral} turns out to straightforward in the same spirit of that of Proposition \ref{Prop:riesz-integral}. 
Let us first prepare some basic facts about the Lebesgue measure on $\G$. 

\begin{lemma} (\cite[Proposition 1.3.21]{BLU}) \label{Lem:lebesgue-1}
The standard Lebesgue measure $m$ on a homogeneous Carnot group $\G = (\R^N, \circ, \delta_\lambda)$ is the Haar measure on $\G$ 
and homogeneous with respect to the dilation, that is, for a measurable subset $E$ and $\lambda>0$,
$m(\delta_\lambda (E)) = \lambda^Q m(E)$ and,  in particular, 
\begin{equation}\label{Equ:ball-v}
m(B_\G(x, r)) = c r^Q
\end{equation}
for a ball $B_\G(x, r)$ with respect to the control distance, where $c$ is a constant for the group 
$\G$.
\end{lemma}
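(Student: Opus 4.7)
The plan is to verify the three assertions of the lemma in sequence, all of which are essentially bookkeeping consequences of the algebraic structure encoded in Definition \ref{Def:hcg}.

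First, I would show that the standard Lebesgue measure $m$ is left-invariant on $\G$. The Baker--Campbell--Hausdorff formula, combined with the requirement that every $\delta_\lambda$ be a group automorphism, forces the product in the graded coordinates $\R^N = \R^{N_1}\times\cdots\times\R^{N_r}$ to have the polynomial form
\begin{equation*}
(x\circ y)^{(i)} = x^{(i)} + y^{(i)} + P_i\bigl(x^{(1)},\ldots,x^{(i-1)},y^{(1)},\ldots,y^{(i-1)}\bigr),
\end{equation*}
where each $P_i$ depends only on the coordinates in strata of strictly lower index. For fixed $g\in\G$, the Jacobian matrix of the left translation $L_g(x) = g\circ x$ with respect to $x$ is therefore block lower-triangular with identity blocks along the diagonal, so $|\det dL_g(x)| \equiv 1$. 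The change-of-variables formula then gives $m(g\circ E) = m(E)$ for every measurable $E$, so $m$ is a left-invariant Radon measure on $\G$ and, by uniqueness of Haar measure up to a positive multiple, it is the Haar measure.

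Next, for the scaling identity, I would note that $\delta_\lambda$ is linear and acts on the $i$-th stratum by multiplication by $\lambda^i$, so its Jacobian determinant equals $\prod_{i=1}^r \lambda^{iN_i} = \lambda^{\sum_i iN_i} = \lambda^Q$. The change-of-variables formula immediately yields $m(\delta_\lambda(E)) = \lambda^Q m(E)$ for every measurable $E\subset\R^N$.

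Finally, for the ball volume formula, I would invoke two standard properties of the control distance $d_\G$ on a homogeneous Carnot group: left-invariance $d_\G(g\circ x,\, g\circ y) = d_\G(x,y)$ and homogeneity of degree one $d_\G(\delta_\lambda(x),\, \delta_\lambda(y)) = \lambda\, d_\G(x,y)$, both of which descend from the corresponding invariance and scaling of horizontal curves and their horizontal lengths. These together give $B_\G(x,r) = x\circ \delta_r(B_\G(0,1))$, and combining with the previous two steps yields $m(B_\G(x,r)) = m(\delta_r(B_\G(0,1))) = r^Q\, m(B_\G(0,1)) = c\, r^Q$, where $c := m(B_\G(0,1))$ is finite and positive since CC-balls are bounded open subsets of $\R^N$ that contain a neighborhood of the origin. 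The only point that is not pure change-of-variables is establishing the lower-triangular structure of the product law, and this is the routine consequence of Definition \ref{Def:hcg} that is carried out in detail in \cite[Chapter 1]{BLU}.
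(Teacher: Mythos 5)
Your proposal is correct, but note that the paper does not actually prove this lemma: it is stated purely as a citation of \cite[Proposition 1.3.21]{BLU} (together with \cite[Proposition 5.1.4]{BLU} for the properties of the control distance), and no argument is given in the text. What you have written is the standard self-contained derivation that the cited reference carries out: left-invariance of $m$ from the block lower-triangular Jacobian of left translation forced by the graded polynomial group law, the factor $\lambda^Q$ from the diagonal action of $\delta_\lambda$ on the strata, and the ball formula from left-invariance and $1$-homogeneity of $d_\G$ via $B_\G(x,r)=x\circ\delta_r(B_\G(0,1))$. All three steps are sound; the only ingredients you use beyond change of variables (the lower-triangular product law, and the fact that CC-balls are bounded open sets so that $0<c<\infty$) are exactly the facts established in \cite[Chapter 1 and Chapter 5]{BLU}, so your proof is a legitimate expansion of what the paper takes as known rather than a different route.
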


\begin{lemma}\label{Lem:lebesgue-2} 
Let $\G$ be a homogeneous Carnot group $\G = (\R^N, \circ, \delta_\lambda)$. Then
$$
\int_{B_\G(p, r)} \frac 1{|p^{-1}\circ x|_\G^s} dx \leq c r^{Q-s}
$$
for $0< s< Q$, where $c$ is a constant for the group $\G$. Similarly, 
$$
\int_{B^c_\G(p, r)} \frac 1{|p^{-1}\circ x|_\G^s} dx \leq c r^{Q-s}
$$
for $s>Q$, where $c$ is a constant for the group $\G$.
\end{lemma}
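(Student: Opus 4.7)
The plan is to use a standard dyadic annular decomposition, combined with the group invariance and the scaling of Lebesgue measure from Lemma~\ref{Lem:lebesgue-1}. By left-invariance of the Haar measure $m$ on $\G$, together with the fact that $|p^{-1}\circ x|_\G = d_\G(p, x)$ by definition of the homogeneous norm corresponding to the Carnot-Carath\'{e}odory distance, both integrals are translated via $x \mapsto p\circ x$ to integrals over a ball or its complement centered at the origin, so without loss of generality one may assume $p = 0$.

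For the first inequality, I would decompose the ball $B_\G(0, r)$ as the disjoint union of the shrinking annuli
$$A_k = B_\G(0, 2^{-k}r) \setminus B_\G(0, 2^{-k-1}r), \qquad k \geq 0.$$
On $A_k$ the integrand is controlled from above by $(2^{-k-1}r)^{-s}$, while \eqref{Equ:ball-v} gives $m(A_k) \leq m(B_\G(0, 2^{-k}r)) = c(2^{-k}r)^Q$. Multiplying these, summing over $k$, and factoring out $r^{Q-s}$ leaves a geometric series with common ratio $2^{-(Q-s)}$, which converges precisely because $s < Q$, yielding the desired bound $c r^{Q-s}$.

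For the second inequality, the analogous decomposition uses the expanding annuli
$$A_k = B_\G(0, 2^{k+1}r) \setminus B_\G(0, 2^k r), \qquad k \geq 0,$$
which exhaust $B_\G^c(0, r)$. Exactly the same volume and pointwise bounds now produce a geometric series with common ratio $2^{Q-s}$, which converges because $s > Q$, again giving $c r^{Q-s}$.

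The argument is essentially bookkeeping, and I do not expect a real obstacle: the hypotheses $s < Q$ and $s > Q$ are used at exactly one point, namely to ensure summability of the relevant geometric series, and the scaling of the answer is forced by the homogeneity $m(\delta_\lambda E) = \lambda^Q m(E)$ together with the $-s$-homogeneity of the integrand. The only subtlety is being careful that the coincidence $B_\G(p, r) = \{x : |p^{-1}\circ x|_\G < r\}$ is invoked so that the dyadic decomposition aligns with the level sets of the integrand.
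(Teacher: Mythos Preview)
Your proposal is correct and follows essentially the same approach as the paper: reduce to $p=0$ by left-invariance, decompose into dyadic annuli, bound the integrand and the volume on each annulus via \eqref{Equ:ball-v}, and sum the resulting geometric series. The paper carries this out in one displayed line for the first inequality and says the second is similar; your write-up is simply a more expanded version of the same computation.
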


\begin{proof} Without loss of generality, let $p=0$. We write
$$
\int_{B_\G(0, r)} |x|_\G ^{-s}dx  = \sum_{k=1}^\infty \int_{B_\G(0, \frac r{2^{k-1}})\setminus B_\G(0, \frac r{2^k})}
|x|_\G^{-s} dx  \leq c (\sum_{k=1}^\infty (2^{s-Q})^k ) r^{Q-s} \leq c r^{Q-s}.
$$
The other one can be proved similarly.
\end{proof}

We are now ready to carry out the proof of Proposition \ref{Prop:new-integral} with the help
from Lemma \ref{Lem:lebesgue-1} and Lemma \ref{Lem:lebesgue-2}.

\begin {proof}[Proof of Proposition \ref{Prop:new-integral}] Without loss of generality, we may assume $y=0$.
We first decompose the integral into three parts
$$
\int_\Omega \frac 1{|x|_\G^{Q-a} |x-p|_\G^{Q-b}} dx = I +II +III,
$$
where
$$
I = \int_{\Omega\cap B_\G(p, \frac 12|p|_\G)} \frac 1{|x|_\G^{Q-a} |x-p|_\G^{Q-b}} dx, \ 
III = \int_{\Omega\setminus B_\G(p, 3|p|_\G)} \frac 1{|x|_\G^{Q-a} |x-p|_\G^{Q-b}} dx
$$
and
$$
II = \int_{\Omega\cap (B_\G(p, 3|p|_\G)\setminus B_\G(p, \frac 12|p|_\G))} \frac 1{|x|_\G^{Q-a} |x-p|_\G^{Q-b}} dx.
$$
For $I$, we observe
$
|x|_\G \geq |p|_\G-|x-p|_\G > \frac 12|p|_\G.
$
Hence, for $a\leq Q$ and $b>0$,
$$
I \leq \frac C{|p|_\G^{Q-a}} \int_{B_\G(p, \frac 12|p|_\G)}\frac 1{|x-p|_\G^{Q-b}}dx \leq \frac C{|p|_\G^{Q-a-b}},
$$
where we use Lemma \ref{Lem:lebesgue-2}. For $II$, we further decompose it 
$$
II= II_1+II_2,
$$
where 
$$
II_1 =  \int_{\Omega\cap (B_\G(p, 3|p|_\G)\setminus B_\G(p, \frac 12|p|_\G))\cap B_\G(0, \frac 12|p|_\G)}
\frac 1{|x|_\G^{Q-a} |x-p|_\G^{Q-b}} dx
$$
and
$$
II_2 =  \int_{\Omega\cap (B_\G(p, 3|p|_\G)\setminus (B_\G(p, \frac 12|p|_\G)\cup B_\G(0, \frac 12|p|_\G)))} 
\frac 1{|x|_\G^{Q-a} |x-p|_\G^{Q-b}} dx.
$$
For $II_1$, we observe
$
|x-p|_\G\geq |p|_\G- |x|_\G > \frac 12|p|_\G
$ 
for $x\in B_\G(0, \frac 12|p|_\G)$. Hence, for $b\leq Q$ and $a>0$,
$$
II_1 \leq \frac C{|p|_\G^{Q-b}} \int_{B_\G(0, \frac 12|p|_\G)} \frac 1{|x|_\G^{Q-a}}dx \leq \frac C{|p|^{Q-a-b}},
$$
where we use Lemma \ref{Lem:lebesgue-2} again. On the other hand, for $a, b \leq Q$,
$$
II_2 \leq \int_{B_\G(p, 3|p|_\G)\setminus (B_\G(p, \frac 12|p|_\G)\cup B_\G(0, \frac 12|p|_\G))} 
\frac 1{|x|_\G^{Q-a} |x-p|_\G^{Q-b}} dx\leq \frac C{|p|_\G^{2Q-a-b}}|p|_\G^Q = \frac C{|p|_\G^{Q-a-b}}
$$
due to \eqref{Equ:ball-v}. Finally, for $III$, we again observe
$
|x|_\G \geq |x-p|_\G -|p|_\G > 2|p|_\G.
$
Hence, for $a+b< Q$,
$$
III \leq \frac C{|p|_\G^{Q-a-b-\epsilon}} \int_{B^c_\G(p, 3|p|_\G)} \frac 1{|x-p|_\G^{Q+\epsilon}}dx \leq 
\frac C{|p|_\G^{Q-a-b}}
$$
for $\epsilon$ is small and positive, where we use the second inequality in Lemma \ref{Lem:lebesgue-2}.
\end{proof}

As a consequence of Proposition \ref{Prop:new-integral}, in the light of the 
equivalence between symmetric homogeneous norms $d_{\Delta_\G}$ and $d_\G$ (cf. \cite[Proposition. 5.1.4]{BLU}), we have
 
\begin{theorem}\label{Thm:new-integral-gauge} Let $\G = (\R^N, \circ, \delta_\lambda)$ be a homogeneous Carnot group. 
Then, given a bounded domain $\Omega\subset R^N$, for a constant $C>0$, 
\begin{equation}\label{Equ:new-inequality}
\int_\Omega \frac 1{d_{\Delta_\G}(x^{-1}\circ y)^{Q-a} d_{\Delta_\G}(p^{-1}\circ x)^{Q-b}} dx \leq  
\frac C{d_{\Delta_\G}(p^{-1}\circ y)^{Q-(a+b)}}
\end{equation}
provided $a, b\in (0, Q)$ and $a+b < Q$, where $Q$ is the homogeneous dimension of $\G$.
\end{theorem}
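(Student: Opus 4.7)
The plan is to reduce Theorem \ref{Thm:new-integral-gauge} to Proposition \ref{Prop:new-integral} by exploiting the equivalence of symmetric homogeneous norms on $\G$. As remarked in the paragraph following \eqref{Equ:L-gauge}, \cite[Proposition 5.1.4]{BLU} provides constants $c_1, c_2 > 0$ such that
\[
c_1 |x|_\G \leq d_{\Delta_\G}(x) \leq c_2 |x|_\G
\]
for every $x \in \G$. This equivalence is the only analytic input beyond Proposition \ref{Prop:new-integral}; the remainder of the argument is purely a bookkeeping exercise on the exponents.

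Next I would apply the lower bound $c_1 |x|_\G \leq d_{\Delta_\G}(x)$ to both factors in the denominator of the integrand on the left-hand side of \eqref{Equ:new-inequality}. Since $Q - a > 0$ and $Q - b > 0$, raising to these powers and inverting preserves the direction of the inequality and produces the pointwise bound
\[
\frac{1}{d_{\Delta_\G}(x^{-1}\circ y)^{Q-a}\, d_{\Delta_\G}(p^{-1}\circ x)^{Q-b}} \leq \frac{c_1^{-(2Q-a-b)}}{|x^{-1}\circ y|_\G^{Q-a}\, |p^{-1}\circ x|_\G^{Q-b}}.
\]
Integrating this inequality over $\Omega$ and invoking Proposition \ref{Prop:new-integral} with the same $a$, $b$, and $\Omega$ then yields an upper bound of the form $C' |p^{-1}\circ y|_\G^{-(Q-(a+b))}$.

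Finally, to re-express this bound in terms of the $\Delta_\G$-gauge, I would use the other half of the norm equivalence, namely $|x|_\G \geq c_2^{-1} d_{\Delta_\G}(x)$. Because $Q - (a + b) > 0$ by hypothesis, this gives
\[
|p^{-1}\circ y|_\G^{-(Q-(a+b))} \leq c_2^{Q-(a+b)}\, d_{\Delta_\G}(p^{-1}\circ y)^{-(Q-(a+b))},
\]
and absorbing all multiplicative constants into a single $C$ delivers \eqref{Equ:new-inequality}. I do not anticipate any substantive obstacle here: the only point to verify is that each exponent appearing in the chain of inequalities has the sign required for the norm equivalence to go in the correct direction, and the hypotheses $a, b \in (0, Q)$ together with $a + b < Q$ guarantee exactly this.
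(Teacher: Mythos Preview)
Your proposal is correct and follows exactly the route indicated in the paper: the theorem is stated there as an immediate consequence of Proposition \ref{Prop:new-integral} together with the equivalence of the symmetric homogeneous norms $d_{\Delta_\G}$ and $d_\G$ from \cite[Proposition 5.1.4]{BLU}. Your verification that the sign of each exponent makes the norm equivalence go the right way is the only detail to check, and you have handled it.
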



\section{On the size of polar sets of potentials for sub-Laplacians} \label{Sect:potential-result}

In this section we want to use \eqref{Equ:new-inequality} to show the estimate \eqref{Equ:intr-dim-p} on the size of polar sets of 
potentials $R^{\mu, D}_{\Delta_\G}$. To demonstrate, we want to first 
use our new approach and 
\eqref{Equ:ine-riesz} to give the alternative proof of the estimates on the Hausdorff dimension of polar sets of Newtonian potentials in
Euclidean spaces (see, for example, \cite[Theorem 1.3]{MQ-2025}). 
Then we extend our arguments and implement our new idea to derive \eqref{Equ:intr-dim-p} in homogeneous Carnot groups.

\subsection{On Newtonian potentials in Euclidean spaces and the new idea}\label{Subsect:euclidean}
Our new idea uses the integral inequalities \eqref{Equ:ine-riesz} 
about Riesz potentials in Euclidean spaces as well as the following facts from geometric measure theory in Euclidean spaces. 

\begin{lemma}\label{Lem:Falconer} (\cite[Theorem 5.4]{Fal}) Let $E$ be a closed subset of $\R^N$ with $\mathcal{H}^s(E)=\infty$. Then
\begin{itemize}
\item[(a)] Let $c$ be a positive number. Then there is a compact subset $F$ of $E$ such that $\mathcal{H}^s(F) = c$.
\item[(b)] There is a compact subset $F$ of $E$ such that $\mathcal{H}^s(F)>0$ and
\begin{equation}\label{Equ:Fal-2}
\mathcal{H}^s(F\cap B(x, r))\leq b r^s
\end{equation}
for $x\in \R^N, r\leq 1$ and some constant $b$.
\end{itemize}
\end{lemma}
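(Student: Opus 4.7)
The plan is to handle (a) and (b) in sequence, with the compact subset of positive finite Hausdorff measure supplied by (a) serving as the starting point for (b).

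For part (a), the argument splits into two steps. The first step is to produce a compact subset $K\subset E$ with $0<\mathcal{H}^s(K)<\infty$. This is the classical trimming result that a Borel (and hence closed) set of infinite $s$-measure contains a compact subset of finite positive $s$-measure. I would prove it by fixing $R$ with $\mathcal{H}^s(E\cap\overline{B(0,R)})>0$, covering this compact piece at small diameter $\delta$ by sets whose $s$-diameter sum is close to $\mathcal{H}^s_\delta(E\cap\overline{B(0,R)})$, selecting a finite subcollection whose union carries a controlled fraction of the mass, and iterating along $\delta\to 0$ in a nested fashion. This is equivalent to a dyadic tree construction that descends into children with non-vanishing measure while keeping the total mass finite.

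The second step is the intermediate-value statement: given $c<\mathcal{H}^s(K)<\infty$, I want a compact $F\subset K$ with $\mathcal{H}^s(F)=c$. I carry this out by a dyadic selection. At scale $2^{-k}$ I enumerate the finitely many half-open dyadic cubes meeting $K$ and greedily form the smallest initial union $U_k$ with $\mathcal{H}^s(K\cap U_k)\ge c$, arranging the $U_k$ to nest as $k$ grows. Setting $F=\bigcap_k\overline{K\cap U_k}$ yields a compact subset of $K$ with $\mathcal{H}^s(F)\ge c$, and the overshoot $\mathcal{H}^s(K\cap U_k)-c$ is controlled by the $\mathcal{H}^s$-mass of $K$ inside the last dyadic cube added at scale $2^{-k}$. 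Since the sum of these cube contributions equals $\mathcal{H}^s(K)<\infty$, the individual maxima tend to zero along a refinement; a generic translation of the dyadic grid handles the pathological case in which cube boundaries carry positive $s$-mass on $K$.

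For part (b), I apply (a) to get a compact $F_0\subset E$ with $0<\mathcal{H}^s(F_0)<\infty$ and perform a Frostman-type trimming. At each dyadic scale $2^{-k}$ I delete from the current set every dyadic cube $Q$ of that scale whose current $\mathcal{H}^s$-mass exceeds $b_0\,2^{-sk}$, where $b_0$ is chosen large in terms of $\mathcal{H}^s(F_0)$. A weak-type mass estimate shows that the total mass removed across all scales is bounded by $\tfrac12\mathcal{H}^s(F_0)$ when $b_0$ is large enough, so the resulting compact $F$ retains $\mathcal{H}^s(F)>0$. The dyadic bound $\mathcal{H}^s(F\cap Q)\le b_0\,2^{-sk}$ then upgrades to $\mathcal{H}^s(F\cap B(x,r))\le b\,r^s$ for all $x\in\R^N$ and $r\le 1$ by covering each ball by a dimension-dependent number of dyadic cubes of comparable scale.

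The main obstacle is the quantitative mass accounting in the Frostman step of (b), where one must verify by a layer-cake argument that the bad cubes at scale $2^{-k}$ carry only a summably small share of $\mathcal{H}^s(F_0)$, so that the geometric series of removed mass sums below $\tfrac12\mathcal{H}^s(F_0)$. A secondary technical point in (a) is the initial extraction of a compact subset of finite positive measure, whose detailed execution rests on the cover-refinement/dyadic tree construction sketched above; the overshoot control in the intermediate-value step is comparatively routine once $\mathcal{H}^s(K)<\infty$ is available.
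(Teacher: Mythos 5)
The paper does not actually prove this lemma: it is quoted verbatim from Falconer (\emph{The Geometry of Fractal Sets}, Theorem 5.4), whose proof runs through dyadic net measures $\mathcal{M}^s$ comparable to $\mathcal{H}^s$ and the increasing sets lemma. Measured against that, your proposal has the right skeleton (extract a compact subset of positive finite measure, then an intermediate-value selection for (a) and a Frostman-type upper bound for (b)), and your intermediate-value step is fine, but the two load-bearing steps are not justified. First, the extraction of a compact $K\subset E$ with $0<\mathcal{H}^s(K)<\infty$ is Besicovitch's subset theorem and is the hardest part of the whole lemma, not a ``secondary technical point.'' The nested cover-refinement you sketch does not show that the limiting compact set has \emph{positive} measure: you cannot speak of ``a controlled fraction of the mass'' when $\mathcal{H}^s(E)=\infty$, the premeasures $\mathcal{H}^s_\delta$ need not behave monotonically under your selections, and $\mathcal{H}^s$ is not continuous from above along a decreasing sequence of sets of infinite measure, so positivity can be lost in the limit. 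The standard repair is precisely Falconer's: replace $\mathcal{H}^s$ by the dyadic net measure and invoke the increasing sets lemma $\mathcal{M}^s_\delta\bigl(\bigcup_i E_i\bigr)=\lim_i\mathcal{M}^s_\delta(E_i)$ to control the limit from below. Without that (or an equivalent device) part (a) is unproved.

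Second, the mass accounting you identify as the ``main obstacle'' of (b) is incorrect as stated. A weak-type estimate bounds the \emph{number} of dyadic cubes at scale $2^{-k}$ with $\mu(Q)>b_0 2^{-sk}$, not the mass they carry; at a single scale the heavy cubes can carry essentially all of $\mathcal{H}^s(F_0)$ (e.g.\ when $F_0$ lies in one cube whose mass exceeds the threshold), so there is no per-scale ``summably small share'' and no geometric series to sum. The conclusion that the removed set has measure below $\tfrac12\mathcal{H}^s(F_0)$ for $b_0$ large is salvageable, but by a different mechanism: the upper density theorem for $\mathcal{H}^s$ restricted to a set of finite measure ($\limsup_{r\to0}\mathcal{H}^s(F_0\cap B(x,r))/(2r)^s\le 1$ for $\mathcal{H}^s$-a.e.\ $x\in F_0$) shows the dyadic $s$-maximal function of $\mu=\mathcal{H}^s|_{F_0}$ is finite a.e., and then continuity from above of the finite measure $\mu$ makes $\mu(\{\sup_k 2^{sk}\mu(Q_k(x))>b_0\})$ small for large $b_0$. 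Alternatively --- and this is what Falconer actually does --- the net-measure construction of the finite-measure subset produces $\mathcal{M}^s(F\cap S)\lesssim |S|^s$ on binary cubes automatically, so (b) comes for free from (a) with no separate trimming. As written, your proof of (b) rests on a false counting claim, and your proof of (a) omits the theorem it really depends on.
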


Next is a simple and important fact for our argument. Let 
\begin{equation}\label{Equ:phi}
\phi (y) = \int_{F} \frac 1{|y-p|^t}d\mathcal{H}^{s} (p).
\end{equation}
Then $\phi(y)$ can be verified to be continuous with the help of Lemma \ref{Lem:Falconer}, provided that $0\leq t<s$.

\begin{lemma}\label{Lem:continuity-euclidean} 
Suppose that  $F\subset\R^N$ be a compact subset which satisfies 
$\mathcal{H}^{s}(F) \in (0, \infty)$ and \eqref{Equ:Fal-2}.
And assume that  $0\leq t< s$. Then $\phi(y)$ in \eqref{Equ:phi} is continuous with respect to $y\in \R^N$.
\end{lemma}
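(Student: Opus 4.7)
The plan is to verify continuity of $\phi$ at an arbitrary point $y_0 \in \mathbb{R}^N$ by splitting the integration over $F$ into a \emph{near} part (where the kernel is singular) and a \emph{far} part (where the kernel is smooth in $y$), and controlling each piece separately. For small $\rho > 0$ with $3\rho \leq 1$ and for any $y$ with $|y-y_0| < \rho$, I would write
\begin{equation*}
\phi(y) \;=\; \int_{F \cap B(y_0,\, 2\rho)} \frac{d\mathcal{H}^{s}(p)}{|y-p|^{t}} \;+\; \int_{F \setminus B(y_0,\, 2\rho)} \frac{d\mathcal{H}^{s}(p)}{|y-p|^{t}} \;=:\; I_{\text{near}}(y;\rho) + I_{\text{far}}(y;\rho).
\end{equation*}

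On the far piece, the triangle inequality gives $|y-p| \geq \rho$ for all $p \in F \setminus B(y_0, 2\rho)$, so the integrand is bounded uniformly in $y$ by $\rho^{-t}$ and depends continuously on $y$. Since $\mathcal{H}^{s}(F) < \infty$, the Lebesgue dominated convergence theorem gives $I_{\text{far}}(y;\rho) \to I_{\text{far}}(y_0;\rho)$ as $y \to y_0$, for each fixed $\rho$.

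On the near piece, I would invoke the Frostman-type growth bound \eqref{Equ:Fal-2} together with a dyadic annular decomposition centered at $y$. Since $F \cap B(y_0, 2\rho) \subset F \cap B(y, 3\rho)$ whenever $|y-y_0|<\rho$, setting $A_k = B(y, 3\rho\cdot 2^{-k}) \setminus B(y, 3\rho\cdot 2^{-k-1})$ yields
\begin{equation*}
I_{\text{near}}(y;\rho) \;\leq\; \sum_{k=0}^{\infty} \frac{\mathcal{H}^{s}\!\left(F \cap B(y, 3\rho\cdot 2^{-k})\right)}{(3\rho\cdot 2^{-k-1})^{t}} \;\leq\; b\, 2^{t}(3\rho)^{s-t} \sum_{k=0}^{\infty} 2^{-(s-t)k} \;\leq\; C\,\rho^{\,s-t},
\end{equation*}
where the geometric series converges precisely because $s>t$, and $C$ depends only on $b,s,t$. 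The same bound applies when $y=y_0$.

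Given $\varepsilon > 0$, I would first choose $\rho$ so small that $2C\rho^{\,s-t} < \varepsilon/2$ (this controls both near pieces through the triangle inequality), and then choose $\delta < \rho$ so that $|y-y_0| < \delta$ forces $|I_{\text{far}}(y;\rho) - I_{\text{far}}(y_0;\rho)| < \varepsilon/2$; together these give $|\phi(y) - \phi(y_0)| < \varepsilon$. The main obstacle, and the only place where the hypotheses really bite, is the uniform smallness of $I_{\text{near}}(y;\rho)$ as $\rho \to 0$: this is exactly where the growth condition \eqref{Equ:Fal-2} and the strict inequality $t<s$ are both essential, since otherwise the dyadic sum would fail to converge or would fail to be a positive power of $\rho$.
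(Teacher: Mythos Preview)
Your proof is correct and follows essentially the same strategy as the paper's: a near/far split around $y_0$, a dyadic-annulus estimate using the Frostman bound \eqref{Equ:Fal-2} to make the near contribution uniformly $O(\rho^{s-t})$, and continuity of the bounded far integrand. The only cosmetic difference is that you invoke dominated convergence for the far piece, whereas the paper writes out an explicit pointwise estimate on $\bigl||y-p|^{t}-|y_0-p|^{t}\bigr|$ there.
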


\begin{proof} We want to show,
$\forall \ y_0\in\R^n$ and $\forall \ \epsilon > 0$, $\exists \ \delta> 0$ such that 
$
|\phi(y) - \phi(y_0)| \leq \epsilon,
$
provided that $|y -y_0| < \delta$. We first observe that, for any $\bar\delta \leq 1$,
\begin{equation}\label{Equ:inside-small-ball-e}
\int_{F\cap B(y, \bar\delta)} \frac 1{|y -p|^t}d\mathcal{H}^{s}(p)
\leq C \bar\delta^{s - t}
\end{equation}
where a single $C$ can be chosen for all $y\in \overline{B(y_0, 2)}$ and $B(y_0, r)$ is the Euclidean ball with radius $r$ and centered at $y_0$. 
To prove \eqref{Equ:inside-small-ball-e}, we estimate it through dyadic annuli,  
\begin{equation}\label{Equ:003-e}
\aligned
& \int_{F\cap B(y, \bar\delta)}  \frac 1{|y - p|^t}d\mathcal{H}^{s}(p)  
 = \sum_{k=0}^\infty \int_{F\cap (B(y, \frac {\bar\delta}{2^k})\setminus B(y, \frac {\bar\delta}{2^{k+1}}))} 
\frac 1{|y - p|^t}d\mathcal{H}^{s}(p) \\
& \leq  \sum_{k=0}^\infty  (\frac {2^{k+1}}{\bar\delta})^t \mathcal{H}^{s} (F\cap B(y, \frac {\bar\delta} {2^k}))  
\leq  b \sum_{k=0}^\infty  (\frac {2^{k+1}}{\bar\delta})^t (\frac {\bar\delta} {2^k})^{s}  =  \frac {2^tb}{1 - 2^{t-s}} \bar\delta^{s-t}
\endaligned
\end{equation}
where we use \eqref{Equ:Fal-2}. We decompose the difference as follows:
$$
 \phi(y) - \phi(y_0) = \Phi_1+\Phi_2
= (\int_{F\cap B(y_0, \bar\delta)}  +
\int_{F\setminus B(y_0, \bar\delta)}) (\frac 1{|y-p|^t} -
\frac 1{|y_0-p|^t}) d\mathcal{H}^{s}(p).
$$
Let $\delta_1\in (0, 1]$ 
be such that $\frac {C2^t}{1 - 2^{t-s}}\delta_1^{s-t} < \frac \epsilon 4$ according to \eqref{Equ:003-e}. Then we get
$$
|\Phi_1| \leq \frac \epsilon 2 \text{ for $\bar\delta = \delta_2 =  \frac 1{2}\delta_1\leq \delta_1$ and $|y - y_0| < \delta_2$}.
$$
Because $B(y_0, \bar\delta) \subset B(y, \delta_1)$. Meanwhile
$$
|\Phi_2|  \leq \int_{F\setminus B(y_0, \bar\delta)}  \frac {||y_0-p|^t - |y - p|^t|}
{|y_0 - p|^t |y - p|^t}d\mathcal{H}^{s}(p)
\leq \frac C{\bar\delta^{2t}} \int_{F\setminus B(y_0, \bar\delta)}
||y- p|^t - |y_0- p|^t| d\mathcal{H}^{s}(p)
$$
for $|y -  y_0| < \delta_3 = \frac 1{2} \bar\delta$. Next, it is easily seen that there exists $\delta_4>0$ such that 
$$
 \int_{F} ||y - p|^t - |y_0- p|^t| d\mathcal{H}^{s}(p) \leq \frac {\bar\delta^{2t}}{2C} \epsilon
$$
if $|y - y_0|<  \delta_4$ by the reversed triangle inequality. So
$$
|\Phi_2| \leq \frac \epsilon 2 \text{ for $\bar\delta = \delta_2$ and $|y - y_0|< \min\{\delta_2, \delta_3, \delta_4\}$}.
$$
The proof of the lemma is completed.
\end{proof}

Now we are ready to present the alternative proof of the following result with our new idea. 

\begin{theorem}\label{Thm:new-SY-potential} Let $\mu$ be a nonnegative Radon measure with the support in $D\subset \R^N$ and
$$
R^{\mu, D} (x) = \int_D \frac 1{|x-y|^{N-2}}d\mu(y)
$$
be the Newtonian potential. Suppose that $S$ is a compact subset of  $D$ and that
\begin{equation}\label{Equ:complete-euclidean}
R^{\mu, D} (x) \to \infty \text{ as $x\to S$ and } \int_0^\delta (R^{\mu. D}(\gamma (t)))^\frac 2{N-2} dt = \infty
\end{equation}
for any $C^1$ curve $\gamma(t): [0, 1]\to \R^N$ with $\gamma(0) \in S$ and some $\delta>0$. Then, for $N\geq 4$,  
\begin{equation}\label{Equ:theorem 1.3}
dim_{\mathcal{H}} (S) \leq \frac {N-2}2.
\end{equation}
\end{theorem}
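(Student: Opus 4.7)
The plan is to argue by contradiction. Suppose $\dim_\mathcal{H}(S) > (N-2)/2$ and fix $s$ with $(N-2)/2 < s < \dim_\mathcal{H}(S)$, so that $\mathcal{H}^s(S) = \infty$. By Lemma \ref{Lem:Falconer}(b), select a compact $F \subset S$ with $0 < \mathcal{H}^s(F) < \infty$ satisfying the density bound $\mathcal{H}^s(F \cap B(x,r)) \le b r^s$. For each $p \in F$ pick the straight curve $\gamma_p(t) = p + tv$ with a fixed unit vector $v$ chosen transverse to $F$. The completeness hypothesis \eqref{Equ:complete-euclidean} gives $\int_0^\delta R^{\mu,D}(\gamma_p(t))^{2/(N-2)}\,dt = \infty$ for every $p \in F$, so integration against $d\mathcal{H}^s$ yields the divergent Fubini integral
$$J := \int_F \int_0^\delta R^{\mu,D}(\gamma_p(t))^{2/(N-2)}\,dt\,d\mathcal{H}^s(p) = \infty.$$
The contradiction will come from a matching upper bound $J < \infty$ whenever $s > (N-2)/2$.

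Write $\alpha := 2/(N-2) \le 1$ (using $N \ge 4$). Jensen's inequality on $F$ (concavity of $x^\alpha$) gives
$$J \le \mathcal{H}^s(F)^{1-\alpha} \int_0^\delta \Big(\int_F R^{\mu,D}(q+tv)\,d\mathcal{H}^s(q)\Big)^\alpha dt.$$
Expanding $R^{\mu,D}$ and swapping via Fubini, the inner integral equals $\int_D U(y-tv)\,d\mu(y)$ with $U(z) := \int_F |q-z|^{-(N-2)}\,d\mathcal{H}^s(q)$. A dyadic decomposition of $F$ about $z$ using the density bound yields the pointwise estimate $U(z) \le C\,d(z,F)^{s-(N-2)}$ in the relevant range $(N-2)/2 < s < N-2$. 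Plugging back, and using transversality of $v$ to estimate $d(y-tv,F) \gtrsim t$ for $y \in F$, one is reduced to verifying $\int_0^\delta t^{(s-(N-2))\alpha}\,dt < \infty$; this holds precisely when $(s-(N-2))\alpha > -1$, i.e., when $s > (N-2)/2$, delivering the desired $J < \infty$.

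The main obstacle is the rigorous handling of this last step for an arbitrary nonnegative Radon $\mu$, where Giraud's integral inequality \eqref{Equ:ine-riesz} plays the key role: it collapses the composed kernel $|x-y|^{-(N-2)}|x-p|^{-(N-b)}$ into a single factor $|y-p|^{-(N-2-b)}$, allowing the heuristic pointwise bound on $U(z)$ to be replaced by a genuine Fubini estimate in which the $\mathcal{H}^s$-integration against $F$ is absorbed into a Riesz-like potential of $\mu$. A secondary issue is the nonlinearity $\alpha \le 1$, which requires the subadditivity $(a+b)^\alpha \le a^\alpha + b^\alpha$ to split $\mu$ into dyadic shells about each evaluation point and to recombine the estimates. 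The factor-of-two improvement from the naive $\dim S \le N-2$ (which is all that a pure $L^1$ upgrade via Jensen in $t$ would give) to the sharp $\dim S \le (N-2)/2$ comes exactly from keeping the $2/(N-2)$ power intact and playing it off against the $s$-dimensional density of $F$ via \eqref{Equ:ine-riesz}, which is the essence of the authors' new approach.
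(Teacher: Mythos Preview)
Your setup (contradiction, Frostman-type subset $F$, and the divergence $J=\infty$) matches the paper, but the estimate $J<\infty$ has a real gap. The step ``using transversality of $v$ to estimate $d(y-tv,F)\gtrsim t$ for $y\in F$'' is not justifiable: $F$ is an arbitrary compact set of dimension $s>(N-2)/2$ (possibly $>1$), and there is no single direction $v$ for which every line $p+\mathbb{R}v$ meets $F$ only once; worse, in your Fubini expression $\int_D U(y-tv)\,d\mu(y)$ the variable $y$ ranges over $\mathrm{supp}\,\mu\subset D$, not over $F$, so the transversality bound is not even the relevant quantity. Without that pointwise control on $G(t)=\int_D U(y-tv)\,d\mu(y)$ you cannot integrate $G(t)^\alpha$ in $t$, and your invocation of Giraud's inequality \eqref{Equ:ine-riesz} does not rescue this: that inequality requires integration of the double kernel over a \emph{full $N$-dimensional} region in $x$, whereas a fixed direction only gives you a one-dimensional segment.

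The paper resolves exactly this by two moves you are missing. First, it applies H\"older in $t$ with weight $r^\alpha$ (rather than Jensen over $F$), which strips the exponent $2/(N-2)$ completely and reduces matters to showing $\int_0^\delta R^{\mu,D}(p+r\sigma)\,r^\alpha\,dr<\infty$ for \emph{some} $p\in S'$ and \emph{some} $\sigma$. Second, and crucially, it averages over \emph{all} directions $\sigma\in\mathbb{S}^{N-1}$: in polar coordinates $r^\alpha\,dr\,d\sigma=|x-p|^{-(N-1-\alpha)}\,dx$, so the triple integral over $S'\times\mathbb{S}^{N-1}\times[0,\delta]$ becomes
\[
\int_D d\mu(y)\int_{S'} d\mathcal{H}^{\frac{N-2}{2}+\epsilon}(p)\int_{B_\delta(p)}\frac{dx}{|x-y|^{N-2}|x-p|^{N-1-\alpha}},
\]
to which Giraud's inequality applies directly, collapsing the inner integral to $C|y-p|^{-(\frac{N-2}{2}+\frac{\epsilon}{2})}$ for $\alpha=\frac{N-4}{2}-\frac{\epsilon}{2}$. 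The remaining $\mathcal{H}^s$-integral is then bounded via Lemma~\ref{Lem:continuity-euclidean} since the exponent $\frac{N-2}{2}+\frac{\epsilon}{2}$ is strictly below $s=\frac{N-2}{2}+\epsilon$. The direction-averaging is the essential idea; with it there is no need for any transversality hypothesis or dyadic splitting of $\mu$.
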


\begin{proof} Assume otherwise that $dim_{\mathcal{H}}(S) > \frac {N-2}2$ and consequently there is a compact subset $S'\subset S$ such that 
$$
0 < \mathcal{H}^{\frac {N-2}2 +\epsilon}(S') < \infty
$$
and \eqref{Equ:Fal-2} holds for $S'$, for some $\epsilon >0$ by Lemma \ref{Lem:Falconer}. Let us first assume $N\geq 5$.  
To contradict with \eqref{Equ:complete-euclidean}, we want to have 
$$
\int_0^\delta (R^{\mu, D}(p+r\sigma))^\frac 2{N-2} dr < \infty, \text{ for some $\delta>0$, some point $p\in S'$, and some direction 
$\sigma\in \mathbb{S}_1$.}
$$
For this, by the H\"{o}lder's inequality,
$$
\int_0^\delta (R^{\mu, D}(p+r\sigma))^\frac 2{N-2} dr \leq (\int_0^\delta R^{\mu, D}(p+r\sigma) r^\alpha dr)^\frac 2{N-2} (\int_0^\delta r^{-\frac {2\alpha}{N-4}}dr)^{1-\frac 2{N-2}},
$$
one realizes that it suffices to show $\int_0^\delta R^{\mu, D}(p+r\sigma) r^\alpha dr < \infty$
when $\alpha < \frac {N-4}2$. Next, for our purpose, we want to claim
\begin{equation}\label{Equ:Euclidean-integral}
\int_{S'}d\mathcal{H}^{\frac {N-2}2+\epsilon}(p) \, \int_{\mathbb{S}^{N-1}_1}d\sigma \int_0^\delta R^{\mu, D} (p+r\sigma) r^\alpha dr < \infty 
\text{ for some $\alpha < \frac {N-4}2$. }
\end{equation}
Hence, to achieve the claim \eqref{Equ:Euclidean-integral}, we apply Fubini's Theorem and rewrite the integral
\begin{equation}
\aligned
\int_{S'} & d\mathcal{H}^{\frac {N-2}2+\epsilon}(p) \, \int_{\mathbb{S}^{N-1}_1}d\sigma \int_0^\delta R^{\mu, D} (p+r\sigma) r^\alpha dr \\
= \int_{S'} & d\mathcal{H}^{\frac {N-2}2+\epsilon}(p) \, \int_{\mathbb{S}^{N-1}_1} d\sigma \int_0^\delta dr \int_D\frac 1{|p+r\sigma-y|^{N-2}}r^\alpha
d\mu(y) \\
 = \int_D & d\mu(y) \int_{S'}d\mathcal{H}^{\frac {N-2}2 + \epsilon}(p) \int_{B_\delta (p)}\frac {dx}{|x-y|^{N-2}|x-p|^{N-1-\alpha}}. \endaligned
\end{equation}
Then we apply Proposition \ref{Prop:riesz-integral} and get
$$
 \int_{B_\delta(p)} \frac 1{|x-y|^{N-2}|x-p|^{N-1-\alpha}}dx \leq \frac C{|y - p|^{N-3-\alpha}} = \frac C{|y-p|^{\frac {N-2}2+\frac \epsilon 2}}
$$ 
for $\alpha = \frac {N-4}2 - \frac \epsilon 2$. Therefore, by Lemma \ref{Lem:continuity-euclidean}, we know
$$
\int_{S'} \frac 1{|y-p|^{\frac {N-2}2+\frac \epsilon 2}}  d\mathcal{H}^{\frac {N-2}2+\epsilon} (p) 
$$
is continuous with respect to $y$ and  \eqref{Equ:Euclidean-integral} holds. Thus this completes the proof for $N\geq 5$. For $N=4$, the above argument works with $\alpha=0$ and no need to apply H\"{o}lder's  inequality to get
$$
\int_{S'}d\mathcal{H}^{1+\epsilon}(p) \, \int_{\mathbb{S}^{3}_1}d\sigma \int_0^\delta R^{\mu, D}(p+r\sigma) dr < \infty.
$$
Our new approach does not seem to work for $N=3$ for this theorem.
\end{proof}


\subsection{On potentials for sub-Laplacians in polarizable Carnot groups}\label{Subsect:carnot}
In this subsection, we first recall polar coordinates from \cite{BT} for polarizable Carnot groups (cf. \cite[Definition 2.12]{BT}). 
We then extend tools in geometric measure theory
in the context of homogeneous Carnot groups. After these preparations,  we use the new idea and \eqref{Equ:new-inequality} to 
derive \eqref{Equ:intr-dim-p} on the Hausdorff dimension of polar sets of potentials of nonnegative Radon measures for sub-Laplacians. 
Our main result on potentials is

\begin{theorem}\label{Thm:potential-carnot} Let $\G=(\R^N, \circ, \delta_\lambda)$ be a polarizable Carnot group.
Let $R^{\mu, D}_{\Delta_\G}$ be the potential of a nonnegative Radon measure $\mu$ supported in $D\subset \R^N$
for the sub-Laplacian $\Delta_\G$ in $\G$.  Suppose that  $S$ is a compact subset in $D$ and that
\begin{equation}\label{Equ:complete-carnot}
\lim_{t\to 0} R^{\mu, D}_{\Delta_\G}(\gamma(t)) = \infty \text{ and } \int_0^\delta (R^{\mu, D}_{\Delta_\G}(\gamma(t)))^\frac 2{Q-2} dt = \infty,
\end{equation}
for any curve $\gamma(t): [0, \delta]\to \R^N$ with $\gamma(0)\in S$ and $\gamma\in C[0, \delta]\cap C^1(0, \delta]$ 
that is horizontal with respect to the sub-Riemannian structure of $\G$ and
some $\delta >0$, where $Q$ is the homogeneous dimension of $\G$. Then 
$$
dim_{\mathcal{H}_{\Delta_\G}} (S) \leq \frac {Q-2}2.
$$
\end{theorem}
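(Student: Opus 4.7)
The plan is to mimic the structure of the alternative proof of Theorem \ref{Thm:new-SY-potential}, substituting Euclidean polar coordinates with the horizontal radial polar coordinates of Balogh--Tyson on polarizable Carnot groups and the classical Riesz integral inequality with our new inequality \eqref{Equ:new-inequality}. Assume for contradiction that $dim_{\mathcal{H}_{\Delta_\G}}(S) > \frac{Q-2}{2}$. Then, by the $\Delta_\G$-gauge analog of Lemma \ref{Lem:Falconer} (to be obtained as one of our extensions of geometric measure theory to $\G$ equipped with $d_{\Delta_\G}$), there exist $\epsilon>0$ and a compact subset $S'\subset S$ with $0<\mathcal{H}^{\frac{Q-2}{2}+\epsilon}_{\Delta_\G}(S')<\infty$ satisfying a density bound of the form $\mathcal{H}^{\frac{Q-2}{2}+\epsilon}_{\Delta_\G}(S'\cap B_{\Delta_\G}(x,r))\leq b\,r^{\frac{Q-2}{2}+\epsilon}$.

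The completeness hypothesis \eqref{Equ:complete-carnot} would then have to fail along every horizontal radial curve $\gamma_\sigma^p(t)$ from every $p\in S'$. To contradict this, I would use H\"older's inequality in the radial variable,
\begin{equation*}
\int_0^\delta (R^{\mu,D}_{\Delta_\G}(\gamma_\sigma^p(t)))^{\frac{2}{Q-2}}dt \leq \Bigl(\int_0^\delta R^{\mu,D}_{\Delta_\G}(\gamma_\sigma^p(t))\,t^\alpha dt\Bigr)^{\frac{2}{Q-2}}\Bigl(\int_0^\delta t^{-\frac{2\alpha}{Q-4}}dt\Bigr)^{1-\frac{2}{Q-2}},
\end{equation*}
so that it suffices to find some $\alpha<\frac{Q-4}{2}$ (taking $\alpha=0$ when $Q=4$) for which the weighted radial integral is finite for at least one $p\in S'$ and at least one direction $\sigma$. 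I claim
\begin{equation*}
\mathcal{I}:=\int_{S'}d\mathcal{H}^{\frac{Q-2}{2}+\epsilon}_{\Delta_\G}(p)\int_{\Sigma}d\sigma\int_0^\delta R^{\mu,D}_{\Delta_\G}(\gamma_\sigma^p(t))\,t^\alpha dt<\infty
\end{equation*}
for a suitable choice of $\alpha$, where $\Sigma$ is the unit $d_{\Delta_\G}$-sphere serving as the angular parameter space in the Balogh--Tyson polar decomposition.

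Next I would apply Fubini's theorem together with \eqref{Equ:polar-integral}. Writing $x=\gamma_\sigma^p(t)$ and absorbing the Jacobian of the polar coordinate change, the innermost pair of integrals converts into a Lebesgue integral on a $d_{\Delta_\G}$-ball $B_{\Delta_\G}(p,\delta)$ with a weight $d_{\Delta_\G}(p^{-1}\circ x)^{\alpha-(Q-1)}$ coming from the radial weight $t^\alpha$ and the factor $t^{Q-1}$ in the polar Jacobian. Commuting the $\mu$-integral to the outside, this yields
\begin{equation*}
\mathcal{I}=\int_D d\mu(y)\int_{S'}d\mathcal{H}^{\frac{Q-2}{2}+\epsilon}_{\Delta_\G}(p)\int_{B_{\Delta_\G}(p,\delta)}\frac{dx}{d_{\Delta_\G}(x^{-1}\circ y)^{Q-2}\,d_{\Delta_\G}(p^{-1}\circ x)^{Q-1-\alpha}}.
\end{equation*}
Choosing $\alpha=\frac{Q-4}{2}-\frac{\epsilon}{2}$, the exponents $a=2$ and $b=1+\alpha$ satisfy $a+b<Q$ and $a,b\in(0,Q)$ for small $\epsilon$, so Theorem \ref{Thm:new-integral-gauge} bounds the innermost integral by $C\,d_{\Delta_\G}(p^{-1}\circ y)^{-\frac{Q-2}{2}-\frac{\epsilon}{2}}$.

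Finally, an analog of Lemma \ref{Lem:continuity-euclidean} for the gauge $d_{\Delta_\G}$, using the density bound for $S'$ and dyadic annuli exactly as in \eqref{Equ:003-e}, would show that $y\mapsto\int_{S'}d_{\Delta_\G}(p^{-1}\circ y)^{-\frac{Q-2}{2}-\frac{\epsilon}{2}}d\mathcal{H}^{\frac{Q-2}{2}+\epsilon}_{\Delta_\G}(p)$ is a bounded continuous function on the compact support of $\mu$, so integrating against $d\mu(y)$ gives $\mathcal{I}<\infty$. By Fubini this produces, on a set of positive $\mathcal{H}^{\frac{Q-2}{2}+\epsilon}_{\Delta_\G}\otimes d\sigma$-measure, a pair $(p,\sigma)$ along which the weighted radial integral is finite, contradicting \eqref{Equ:complete-carnot} through H\"older. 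The main obstacles I anticipate are technical: first, carefully justifying the measurability and Fubini manipulations using that $d_{\Delta_\G}$ is only a homogeneous norm (not symmetric in a Riemannian sense), and second, extending Falconer-type subset extraction and the continuity lemma to $(\G,d_{\Delta_\G})$, including verifying that \eqref{Equ:polar-integral} from \cite{BT} produces the correct Jacobian factor $t^{Q-1}$ so that the exponent $1+\alpha$ in the inequality matches $(Q-2)/2+\epsilon/2$.
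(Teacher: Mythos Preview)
Your proposal is correct and follows essentially the same approach as the paper's own proof: contradiction via Lemma \ref{Lem:Falconer-carnot} to extract $S'$, H\"older reduction to a weighted radial integral, Fubini combined with the Balogh--Tyson polar coordinates \eqref{Equ:polar-integral} (which indeed gives the Jacobian factor $s^{Q-1}$) to rewrite as a Lebesgue integral with weight $d_{\Delta_\G}(x,u)^{-(Q-1-\alpha)}$, application of Theorem \ref{Thm:new-integral-gauge} with $\alpha=\frac{Q-4}{2}-\frac{\epsilon}{2}$, and finally Lemma \ref{Lem:continuity-carnot} to bound the remaining integral in $y$. The technical points you flag (the Falconer-type extraction and the continuity lemma in $(\G,d_{\Delta_\G})$) are exactly Lemmas \ref{Lem:Falconer-carnot} and \ref{Lem:continuity-carnot} in the paper.
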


Here $d_{\Delta_\G}$ is the $\Delta_\G$-gauge given in \eqref{Equ:L-gauge} and $\mathcal{H}^s_{\Delta_\G}$ is the Hausdorff measure associated with $d_{\Delta_\G}$ in $\G$.


\subsubsection{Polarizable Carnot groups and polar coordinates} In this subsection we follow \cite{BT} to introduce polar
coordinates of horizontal radial curves on polarizable Carnot groups. 
To introduce the horizontal 
radial curves as flow lines, we let 
$$
\textup{S} = \{x\in \R^N: d_{\Delta_\G}(x) = 1\} \text{ and } \mathcal{Z} = \{0\}\bigcup \{x\in\R^N: \nabla_0 d_{\Delta_\G} (x) = 0\},
$$ 
where $\nabla_0 d_{\Delta_\G}$ is the horizontal gradient of the $\Delta_\G$-gauge $d_{\Delta_\G}$. In $\R^N\setminus\mathcal{Z}$, we consider the 
flow lines $\phi(s, g):  \R^N\setminus\mathcal{Z}\to  \R^N\setminus\mathcal{Z}$ that satisfies
\begin{equation} \label{Equ:flow-BT}
\left\{ \aligned \frac {\partial}{\partial s} \phi(s, g) & = \frac {d_{\Delta_\G}(\phi(s, g))}s \cdot 
\frac {\nabla_0 d_{\Delta_\G} (\phi(s, g))} {|\nabla_0 d_{\Delta_\G}(\phi(s, g))|}, \\
\phi(1, g)  & = g. \endaligned\right.
\end{equation}

\begin{lemma}\label{Lem:polar-coordinate} (\cite[Lemma 3.3]{BT}) Let $\G= (\R^N, \circ, \delta_\lambda)$ be a polarizable Carnot group. 
Then the flow lines $\phi(s, g)$ satisfies
\begin{itemize}
\item $d_{\Delta_\G}(\phi(s, g)) = s d_{\Delta_\G}(g)$ for $s>0$ and $g\in \R^N\setminus\mathcal{Z}$ (thus each curve $\phi(s, g)$ intersects with 
$\textup{S}$ in precisely one point);
\item the velocity  $\frac {\partial}{\partial s} \phi(s, g)$ is horizontal and the speed is independent of $s$;
\item $\det D \phi(s, g) = s^Q$ for $s>0$ and $g\in \R^N\setminus\mathcal{Z}$.
\end{itemize}
\end{lemma}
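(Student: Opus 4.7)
The three claims are coupled through the defining ODE \eqref{Equ:flow-BT}, and the plan is to dispatch them in order: (i) follows from a first-order linear ODE for $f(s):=d_{\Delta_\G}(\phi(s,g))$, (ii) is immediate from the form of the ODE together with (i), and (iii) is the substantive divergence calculation via Liouville's formula. For (i), differentiating along the flow and inserting \eqref{Equ:flow-BT},
\[
f'(s) = \dot\phi(s,g)\cdot\nabla d_{\Delta_\G}(\phi(s,g)) = \frac{f(s)}{s}\cdot\frac{\nabla_0 d_{\Delta_\G}\cdot\nabla_0 d_{\Delta_\G}}{|\nabla_0 d_{\Delta_\G}|}\bigg|_{\phi(s,g)} = \frac{f(s)}{s}\cdot|\nabla_0 d_{\Delta_\G}|(\phi(s,g)).
\]
Polarizability \cite[Definition 2.12]{BT} encodes the $\infty$-harmonicity of $d_{\Delta_\G}$, which conserves $|\nabla_0 d_{\Delta_\G}|$ along integral curves of $\nabla_0 d_{\Delta_\G}$; combined with the degree-$0$ homogeneity of $|\nabla_0 d_{\Delta_\G}|$ under the dilations $\delta_\lambda$ and the choice of gauge in \eqref{Equ:L-gauge}, this makes $|\nabla_0 d_{\Delta_\G}|$ equal to the constant value on the range of the flow. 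The ODE $f'(s) = f(s)/s$ with $f(1) = d_{\Delta_\G}(g)$ then integrates to $f(s) = s\,d_{\Delta_\G}(g)$, and the parenthetical sphere-intersection claim follows at $s = 1/d_{\Delta_\G}(g)$.

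For (ii), $\dot\phi(s,g)$ is by construction a scalar multiple of the horizontal vector $\nabla_0 d_{\Delta_\G}(\phi(s,g))$, hence lies in the horizontal distribution of $\G$; its sub-Riemannian length is the absolute value of the scalar coefficient $f(s)/s$, which by (i) equals $d_{\Delta_\G}(g)$ and is thus independent of $s$. For (iii), I would invoke Liouville's formula: for the spatial Jacobian $J(s):=\det D_g\phi(s,g)$,
\[
\frac{d}{ds}\log J(s) \;=\; (\mathrm{div}\,V)(\phi(s,g)),\qquad V(x):=\frac{d_{\Delta_\G}(x)}{s}\cdot\frac{\nabla_0 d_{\Delta_\G}(x)}{|\nabla_0 d_{\Delta_\G}(x)|},
\]
where $\mathrm{div}$ may be read as the Euclidean divergence, which agrees with the divergence with respect to the Haar measure on $\G$ since left-invariant horizontal vector fields are divergence-free.

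To compute $\mathrm{div}\,V$ I would use two identities. First, since $\Gamma_{\Delta_\G}$ is a positive multiple of $d_{\Delta_\G}^{2-Q}$ and $\Delta_\G\Gamma_{\Delta_\G} = 0$ off the origin, expanding yields
\[
\Delta_\G d_{\Delta_\G} = \frac{Q-1}{d_{\Delta_\G}}|\nabla_0 d_{\Delta_\G}|^2.
\]
Second, the $\infty$-harmonic polarizability identity $\Delta_{\G,\infty}d_{\Delta_\G}=0$ is equivalent to $\nabla_0 d_{\Delta_\G}\cdot\nabla_0|\nabla_0 d_{\Delta_\G}| = 0$. A product-rule expansion of $\mathrm{div}\,V$ along $\phi(s,g)$, together with the normalization of $|\nabla_0 d_{\Delta_\G}|$ extracted in (i), collapses $\mathrm{div}\,V(\phi(s,g))$ to $Q/s$; integrating with $J(1)=\det I = 1$ produces $J(s) = s^Q$. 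The main obstacle is precisely this Part (iii) divergence computation: it has to reconcile the linear harmonicity of the fundamental solution, the nonlinear $\infty$-harmonic polarizability identity, and the gauge normalization of $|\nabla_0 d_{\Delta_\G}|$ uniformly along the flow, all landing on the clean answer $Q/s$. This is the one place where the polarizability hypothesis is being used in an essential, nonlinear way.
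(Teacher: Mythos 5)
First, a point of reference: the paper does not prove this lemma at all --- it is imported verbatim from \cite[Lemma 3.3]{BT} (together with \cite[Theorem 3.7]{BT} for the integration formula), so there is no internal proof to compare against, and what you have written is a reconstruction of Balogh--Tyson's argument. Your architecture is the right one: an ODE for $f(s)=d_{\Delta_\G}(\phi(s,g))$ for claim (i), horizontality read off from the generator for claim (ii), and for claim (iii) Liouville's formula combined with the two identities $\Delta_\G d_{\Delta_\G}=(Q-1)\,d_{\Delta_\G}^{-1}|\nabla_0 d_{\Delta_\G}|^2$ (from $\Delta_\G$-harmonicity of $\Gamma_{\Delta_\G}=d_{\Delta_\G}^{2-Q}$ off the origin) and $\langle\nabla_0 d_{\Delta_\G},\nabla_0|\nabla_0 d_{\Delta_\G}|\rangle=0$ (polarizability). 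Your observation that the Jacobian generators are divergence-free, so Euclidean and Haar divergences agree, is also correct.

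The genuine gap is the normalization of the generator, and it propagates through all three parts. With the flow exactly as printed in \eqref{Equ:flow-BT} (first power of $|\nabla_0 d_{\Delta_\G}|$ in the denominator), your own chain-rule computation gives $f'(s)=\frac{f(s)}{s}\,|\nabla_0 d_{\Delta_\G}|(\phi(s,g))$, and the corresponding divergence comes out as $\frac{Q}{s}\,|\nabla_0 d_{\Delta_\G}|(\phi(s,g))$ rather than $\frac{Q}{s}$. To land on $f(s)=s\,d_{\Delta_\G}(g)$ and $\det D\phi=s^Q$ you therefore need $|\nabla_0 d_{\Delta_\G}|\equiv 1$ along the flow line. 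Your justification of this --- $\infty$-harmonicity conserves $|\nabla_0 d_{\Delta_\G}|$ along its integral curves, plus degree-$0$ homogeneity --- only yields that $|\nabla_0 d_{\Delta_\G}|$ is constant along each flow line, equal to its value at $g$; it does not force that constant to be $1$, and in general it is not (on the Heisenberg group $|\nabla_0 d_{\Delta_\G}|=|z|/d_{\Delta_\G}\leq 1$ and genuinely varies over the unit sphere $\textup{S}$). The resolution is that the Balogh--Tyson generator carries $|\nabla_0 d_{\Delta_\G}|^{2}$ in the denominator. With that normalization, (i) is immediate from the chain rule with no appeal to polarizability at all; the speed equals $d_{\Delta_\G}(g)/|\nabla_0 d_{\Delta_\G}|(\phi(s,g))$, and it is here --- not in (i) --- that the $\infty$-harmonic conservation law is used to get $s$-independence; and your divergence expansion then closes cleanly, $\mathrm{div}\,V=\frac{1}{s}\bigl(1+(Q-1)-0\bigr)=\frac{Q}{s}$. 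So you should either correct the generator to the $|\nabla_0 d_{\Delta_\G}|^{2}$ normalization (which is presumably what the paper intends) or note explicitly that the three bullets cannot all hold for the ODE as printed.
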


\begin{lemma}\label{Lem:polar-integral} (\cite[Theorem 3.7]{BT}) Let $\G = (\R^N, \circ, \delta_\lambda)$ be polarizable.
There exists a unique Radon measure $\sigma$ on $\textup{S}\setminus\mathcal{Z}$ such that for all $u\in L^1(\R^N)$
\begin{equation}\label{Equ:polar-integral}
\int_{\R^N} u(x) dx = \int_0^\infty \int_{\textup{S}\setminus\mathcal{Z}} u(\phi(s, v)) s^{Q-1} d\sigma(v) ds.
\end{equation}
\end{lemma}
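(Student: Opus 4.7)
The plan is to realize \eqref{Equ:polar-integral} as a change-of-variables formula for the map $\Phi(s,v):=\phi(s,v)$ sending $(0,\infty)\times(\textup{S}\setminus\mathcal{Z})$ onto $\R^N\setminus\mathcal{Z}$, with $\sigma$ defined as a cone measure on $\textup{S}\setminus\mathcal{Z}$. A preliminary step is to verify that $\mathcal{Z}$ has Lebesgue measure zero, so that the left-hand side of \eqref{Equ:polar-integral} may be restricted to $\R^N\setminus\mathcal{Z}$ without loss; this follows from polarizability, which guarantees that the gauge $d_{\Delta_\G}$ is smooth on $\R^N\setminus\{0\}$, together with Sard's theorem applied to $\nabla_0 d_{\Delta_\G}$. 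By Lemma \ref{Lem:polar-coordinate}(i) each flow line meets $\textup{S}$ in exactly one point, so $\Phi$ is a $C^1$ bijection onto $\R^N\setminus\mathcal{Z}$. I then define, for Borel $E\subset\textup{S}\setminus\mathcal{Z}$,
\[
\sigma(E):=Q\cdot m\bigl(\Phi((0,1]\times E)\bigr),
\]
which is manifestly a Radon measure and is forced on us by plugging $u=\chi_{\Phi((0,1]\times E)}$ into \eqref{Equ:polar-integral}.

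The central computation is the scaling identity $m\bigl(\Phi((0,r]\times E)\bigr)=r^Q\,m\bigl(\Phi((0,1]\times E)\bigr)$. I would first establish the multiplicative semigroup law $\phi(rs,v)=\phi(r,\phi(s,v))$ by reparametrizing the ODE \eqref{Equ:flow-BT}: setting $\psi(t,v):=\phi(e^t,v)$ converts \eqref{Equ:flow-BT} into the autonomous system $\partial_t\psi = d_{\Delta_\G}(\psi)\,\nabla_0 d_{\Delta_\G}(\psi)/|\nabla_0 d_{\Delta_\G}(\psi)|$ with initial condition $\psi(0,v)=v$, so uniqueness for the ODE flow gives $\psi(t+u,v)=\psi(t,\psi(u,v))$, which is precisely the semigroup property in multiplicative form. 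Then
\[
\Phi((0,r]\times E)=\{\phi(r\tau,v):\tau\in(0,1],\,v\in E\}=\phi(r,\cdot)\bigl(\Phi((0,1]\times E)\bigr),
\]
and Lemma \ref{Lem:polar-coordinate}(iii), which says $\det D\phi(r,\cdot)=r^Q$, yields the scaling identity by the Euclidean change-of-variables formula.

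With the scaling identity in hand, direct computation gives $m\bigl(\Phi((a,b]\times E)\bigr)=\sigma(E)\int_a^b s^{Q-1}\,ds$, which is exactly \eqref{Equ:polar-integral} for $u=\chi_{\Phi((a,b]\times E)}$. The polar slabs $\Phi((a,b]\times E)$ form a $\pi$-system generating the Borel $\sigma$-algebra of $\R^N\setminus\mathcal{Z}$, so a standard monotone class argument extends the identity to characteristic functions of arbitrary Borel sets, then by linearity to simple functions, and finally by the monotone convergence theorem to all nonnegative measurable functions and hence to $L^1$. Uniqueness of $\sigma$ follows because applying \eqref{Equ:polar-integral} to $u=\chi_{\Phi((0,1]\times E)}$ recovers the defining expression. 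The main obstacle is the smoothness and the semigroup property of $\Phi$ on the punctured domain; both rely essentially on polarizability, which ensures that $d_{\Delta_\G}$ is $C^\infty$ off the origin with a well-defined smooth horizontal radial direction field, so that \eqref{Equ:flow-BT} admits unique global flow lines on $\R^N\setminus\mathcal{Z}$.
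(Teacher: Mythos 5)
This lemma is quoted in the paper directly from \cite[Theorem 3.7]{BT} with no proof supplied, so your proposal has to be measured against the argument in \cite{BT}, which it in fact reconstructs quite faithfully: define $\sigma$ as a cone measure, obtain the semigroup law $\phi(rs,v)=\phi(r,\phi(s,v))$ from the logarithmic reparametrization of \eqref{Equ:flow-BT}, convert the Jacobian identity $\det D\phi(s,\cdot)=s^Q$ of Lemma \ref{Lem:polar-coordinate} into the scaling $m\bigl(\Phi((0,r]\times E)\bigr)=r^Q\,m\bigl(\Phi((0,1]\times E)\bigr)$, and then extend from polar slabs to $L^1$ by a monotone class argument. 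That core is correct, and the uniqueness argument is fine.

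The genuine gap is your very first step: you justify $m(\mathcal{Z})=0$ by ``Sard's theorem applied to $\nabla_0 d_{\Delta_\G}$.'' Sard's theorem controls the measure of the set of critical \emph{values} of a map, not of its critical \emph{points}; $\mathcal{Z}$ is precisely a set of (horizontal) critical points, and such sets can in general have positive Lebesgue measure. Worse, Sard applied to $d_{\Delta_\G}$ itself says nothing here: by Euler's identity for $\delta_\lambda$-homogeneous functions, $\nabla d_{\Delta_\G}(x)=0$ forces $d_{\Delta_\G}(x)=0$, so $d_{\Delta_\G}$ has no Euclidean critical points away from the origin and every value is regular --- the entire difficulty is that the \emph{horizontal} gradient may vanish where the full gradient does not. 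The identity \eqref{Equ:polar-integral} with $\int_{\R^N}$ on the left genuinely requires $m(\mathcal{Z})=0$ (the flow never visits $\mathcal{Z}$, so testing against $\chi_{\mathcal{Z}\cap B}$ would otherwise give a contradiction), and establishing this is a nontrivial part of what Balogh and Tyson prove; it needs its own argument, not Sard. A second, smaller soft spot: you assert that the flow is globally defined on $\R^N\setminus\mathcal{Z}$ and preserves this set. This is exactly where polarizability is used in \cite{BT} --- the $\infty$-harmonicity of the gauge makes $|\nabla_0 d_{\Delta_\G}|$ constant along flow lines, so trajectories starting off $\mathcal{Z}$ cannot run into it --- and it should be either proved or explicitly delegated to Lemma \ref{Lem:polar-coordinate}, rather than attributed to a ``well-defined smooth horizontal radial direction field.''
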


It is important for us  that H-type groups (cf. \cite[Definition 5.1]{BT}) are polarizable by \cite[Proposition 5.6]{BT}, and that
H-type groups include the H-type groups $\mathsf{N}$ that are the nilpotent part of Iwasawa decomposition of rank one 
groups $\mathfrak{G}$ as in Theorem \ref{Thm:app-CR}.

\subsubsection{Some preliminaries in geometric measure theory in homogeneous Carnot groups} 
In this subsection we want to extend Lemma \ref{Lem:Falconer} and \ref{Lem:continuity-euclidean} in the previous subsection to be applicable to
the Hausdorff measures $\mathcal{H}^s_{\Delta_\G}$ 
associated with the $\Delta_\G$-gauge $d_{\Delta_\G}$ in homogeneous Carnot groups $\G$. First we want the
following extension of Lemma \ref{Lem:Falconer}.

\begin{lemma}\label{Lem:Falconer-carnot} Let $\G = (\R^N, \circ, \delta_\lambda)$ be a homogeneous Carnot group. 
Let $E$ be a closed subset of $\R^N$ with $\mathcal{H}^s_{\Delta_\G}(E)=\infty$. 
Then
\begin{itemize}
\item[(a)] If $c$ be a positive number, then there is a compact subset $F$ of $E$ such that $\mathcal{H}^s_{\Delta_\G}(F) = c$.
\item[(b)] There is a compact subset $F$ of $E$ such that $\mathcal{H}^s_{\Delta_\G}(F)>0$ and
\begin{equation}\label{Equ:h-regularity}
\mathcal{H}^s_{\Delta_\G}(F\cap B_{\Delta_\G}(x, r))\leq b r^s
\end{equation}
for $x\in \R^N, r\leq 1$ and some constant $b$, where $B_{\Delta_\G}(0, r)$ is the ball with respect to the $\Delta_\G$-gauge $d_{\Delta_\G}$.
\end{itemize}
\end{lemma}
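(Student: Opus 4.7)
The plan is to adapt the classical Falconer argument (\cite[Theorem 5.4]{Fal}) to the metric measure space $(\R^N, d_{\Delta_\G})$. Two structural features make this adaptation essentially routine: the homogeneous norm $d_{\Delta_\G}$ is equivalent to the Carnot-Carath\'{e}odory distance $d_\G$ by \cite[Proposition 5.1.4]{BLU}, which is a genuine metric, so the corresponding Hausdorff measures are comparable; and the Lebesgue measure on $\G$ is doubling of exponent $Q$ by \eqref{Equ:ball-v}, so the $5r$-covering lemma and other Vitali-type results hold. Throughout I freely replace $d_{\Delta_\G}$ by the genuine metric $d_\G$ whenever triangle-inequality issues arise.

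For part (a), I first extract a compact subset $K \subset E$ with $c < \mathcal{H}^s_{\Delta_\G}(K) < \infty$. Since $E$ is closed and $\mathcal{H}^s_{\Delta_\G}(E) = \infty$, the nested exhaustion $E \cap \overline{B_{\Delta_\G}(0, n)}$ consists of compact sets whose $\mathcal{H}^s_{\Delta_\G}$-measures tend to infinity by monotone convergence, so such a $K$ exists. The nondecreasing right-continuous function $\psi(t) = \mathcal{H}^s_{\Delta_\G}(K \cap \overline{B_{\Delta_\G}(0, t)})$ rises from $0$ to $\mathcal{H}^s_{\Delta_\G}(K) > c$ with at most countably many jumps, so I either select $t_0$ with $\psi(t_0) = c$, or select $t_0$ where $\psi$ jumps across $c$ and then partition the jump piece into small compact subsets which I recombine, using $\sigma$-additivity, to attain the value $c$ exactly. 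This yields the desired compact $F$.

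For part (b), apply (a) to obtain a compact $F_0 \subset E$ with $0 < \mathcal{H}^s_{\Delta_\G}(F_0) < \infty$. The key tool is the upper density theorem in the doubling metric space $(\R^N, d_\G)$: the set of $x \in F_0$ with
$$
\limsup_{r \to 0^+} r^{-s}\, \mathcal{H}^s_{\Delta_\G}(F_0 \cap B_{\Delta_\G}(x, r)) = \infty
$$
has $\mathcal{H}^s_{\Delta_\G}$-measure zero, as a consequence of the $5r$-covering lemma applied to the measure $\mathcal{H}^s_{\Delta_\G}$ restricted to $F_0$. I then perform an iterative stopping-time construction at dyadic scales $r_k = 2^{-k}$, removing from $F_0$ a union of open balls on which the localized $s$-density exceeds a fixed large threshold $b$, arranging the constants so that less than $\tfrac{1}{2}\mathcal{H}^s_{\Delta_\G}(F_0)$ is discarded in total. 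Taking the closure of what remains yields a compact $F \subset F_0$ with $\mathcal{H}^s_{\Delta_\G}(F) > 0$ for which the regularity bound \eqref{Equ:h-regularity} holds.

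The main obstacle will be upgrading the uniformity so that \eqref{Equ:h-regularity} is valid for \emph{all} centers $x \in \R^N$ and all radii $r \leq 1$, not merely for $x \in F$ at small scales. Centers $x \notin F$ are handled by picking $x' \in F$ nearest to $x$ in $d_\G$: then $B_{\Delta_\G}(x, r) \cap F \subset B_{\Delta_\G}(x', Cr)$, reducing the bound at $x$ to the bound at $x' \in F$ at a comparable scale. Uniformity over $r \leq 1$ then follows from compactness of $F$ together with Lebesgue doubling, at the cost of enlarging the constant $b$.
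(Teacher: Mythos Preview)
Your argument for (a) has a genuine gap at two places, and in fact the missing content is exactly what the paper imports from Larman \cite{LA}. First, the exhaustion step does \emph{not} produce a compact $K$ with $c<\mathcal{H}^s_{\Delta_\G}(K)<\infty$: monotone convergence only tells you $\mathcal{H}^s_{\Delta_\G}(E\cap\overline{B_{\Delta_\G}(0,n)})\uparrow\infty$, but each of these measures may already be infinite (bounded sets can have infinite $\mathcal{H}^s$-measure). Extracting a compact subset of \emph{finite} positive measure from a set of infinite Hausdorff measure is itself a nontrivial theorem (Besicovitch--Davies in $\R^N$, Larman in finite-dimensional metric spaces). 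Second, your handling of the jump of $\psi$ is circular: ``partition the jump piece into small compact subsets which I recombine to attain $c$ exactly'' is precisely the statement you are trying to prove, now posed on the compact set $K\cap\partial B_{\Delta_\G}(0,t_0)$. Nothing in your sketch explains why that partition exists.

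The paper's proof avoids both issues by invoking \cite[Theorem 2]{LA} directly: Larman proves that in any finite-dimensional compact metric space, a closed set of infinite $\mathcal{H}^s$-measure contains compact subsets of every prescribed finite measure, and the Carnot--Carath\'{e}odory space is finite-dimensional in Larman's sense because it is doubling (\cite{LA-d}). For (b) the paper likewise uses the net construction from \cite[Theorem 1]{LA} to rerun Falconer's proof of \cite[Theorem 5.4]{Fal}. Your density/stopping-time approach to (b) is a plausible alternative route and could be made to work, but note that as written it opens by invoking (a), so it inherits the gap above; and the ``partition'' you need in (a) is, in the end, supplied by exactly the net machinery you are trying to bypass.
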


\begin{proof}
First, (a) follows from \cite[Theorem 2]{LA}. 
This is because Carnot-Carath\'{e}redory spaces, which have the finite doubling property, are finite-dimensional according to
\cite{LA-d}. The proof of \cite[Theorem 2]{LA} uses the family of nets constructed in \cite[Theorem 1]{LA}.  In fact, using the family of nets
constructed in \cite[Theorem 1]{LA}, (b) can be proved as well,  
according to the proof of \cite[Theorem 5.4]{Fal}.
\end{proof}

Next let 
\begin{equation}\label{Equ:phi_L}
\phi_{\Delta_\G} (y) = \int_{F} \frac 1{d_{\Delta_\G}(y, p)^{t}}d\mathcal{H}^{s}_{\Delta_\G} (p). 
\end{equation}
We also want the following extension of Lemma \ref{Lem:continuity-euclidean} on the continuity of $\phi_\L$.

\begin{lemma}\label{Lem:continuity-carnot} Let $\G = (\R^N, \circ, \delta_\lambda)$ be a homogeneous Carnot group.
Suppose that  $F\subset\R^N$ be a compact subset which satisfies $\mathcal{H}^{s}_{\Delta_\G}(F) \in (0, \infty)$ 
and \eqref{Equ:h-regularity}. Assume that  $0\leq t< s$. Then $\phi_{\Delta_\G}(y)$ in \eqref{Equ:phi_L} is continuous.
\end{lemma}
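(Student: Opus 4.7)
The plan is to mirror the Euclidean argument in Lemma \ref{Lem:continuity-euclidean}, replacing $|\cdot|$ by $d_{\Delta_\G}$ and the Euclidean balls by $B_{\Delta_\G}$. Two structural inputs from the Carnot setting take the place of the ordinary triangle inequality: first, $d_{\Delta_\G}$ is a symmetric homogeneous norm, so by \cite[Proposition 5.1.4]{BLU} there is a constant $c_\G \geq 1$ with
\[
d_{\Delta_\G}(x^{-1}\circ z) \leq c_\G\bigl(d_{\Delta_\G}(x^{-1}\circ y) + d_{\Delta_\G}(y^{-1}\circ z)\bigr);
\]
second, $d_{\Delta_\G}$ is smooth and strictly positive on $\R^N\setminus\{0\}$, because $\Gamma_{\Delta_\G}$ is (Definition \ref{Def:fund-sol}) and $d_{\Delta_\G} = \Gamma_{\Delta_\G}^{1/(2-Q)}$.

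The first step is the analog of \eqref{Equ:003-e}: for $y$ in a fixed bounded set and $0 < \bar\delta \leq 1$,
\[
\int_{F\cap B_{\Delta_\G}(y, \bar\delta)} \frac{1}{d_{\Delta_\G}(y^{-1}\circ p)^{t}}\, d\mathcal{H}^{s}_{\Delta_\G}(p) \leq C\bar\delta^{\,s-t},
\]
proved by dyadic decomposition into $B_{\Delta_\G}(y, \bar\delta/2^{k})\setminus B_{\Delta_\G}(y, \bar\delta/2^{k+1})$, bounding the integrand by $(2^{k+1}/\bar\delta)^{t}$ on each annulus and the $\mathcal{H}^{s}_{\Delta_\G}$-measure of the annulus by $b(\bar\delta/2^{k})^{s}$ via \eqref{Equ:h-regularity}; the resulting geometric series converges because $t < s$.

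Given $y_0$ and $\epsilon > 0$, I would then split
\[
\phi_{\Delta_\G}(y) - \phi_{\Delta_\G}(y_0) = \Psi_1 + \Psi_2,
\]
where $\Psi_1$ is the integral over $F\cap B_{\Delta_\G}(y_0, \bar\delta)$ and $\Psi_2$ the integral over $F\setminus B_{\Delta_\G}(y_0, \bar\delta)$ of the difference $d_{\Delta_\G}(y^{-1}\circ p)^{-t} - d_{\Delta_\G}(y_0^{-1}\circ p)^{-t}$. For $\Psi_1$, the quasi-triangle inequality yields $B_{\Delta_\G}(y_0, \bar\delta) \subset B_{\Delta_\G}(y, 2c_\G\bar\delta)$ once $d_{\Delta_\G}(y^{-1}\circ y_0) \leq \bar\delta$, so applying the dyadic bound to each of the two terms in $\Psi_1$ gives $|\Psi_1| \leq C'\bar\delta^{\,s-t}$, which is $< \epsilon/2$ once $\bar\delta$ is small enough. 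For $\Psi_2$, with $\bar\delta$ now fixed, the same quasi-triangle inequality keeps $d_{\Delta_\G}(y^{-1}\circ p) \geq \bar\delta/(2c_\G)$ for all $p\in F\setminus B_{\Delta_\G}(y_0, \bar\delta)$ and all $y$ close to $y_0$, so $(y, p)\mapsto d_{\Delta_\G}(y^{-1}\circ p)^{-t}$ is continuous on a compact set avoiding $\{y=p\}$, hence uniformly continuous there, and
\[
|\Psi_2| \leq \mathcal{H}^{s}_{\Delta_\G}(F)\cdot \sup_{p\in F\setminus B_{\Delta_\G}(y_0, \bar\delta)}\bigl|d_{\Delta_\G}(y^{-1}\circ p)^{-t} - d_{\Delta_\G}(y_0^{-1}\circ p)^{-t}\bigr| < \epsilon/2
\]
for $y$ sufficiently close to $y_0$. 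Since the topology induced by $d_{\Delta_\G}$ coincides with the Euclidean topology, this gives the required continuity.

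I expect no serious obstacle; the only genuine change from the Euclidean case is the loss of the triangle inequality, and the quasi-triangle inequality compensates fully since it is used only to relate balls around $y$ and $y_0$, where it costs an extra factor of $c_\G$ in the radius. The smoothness of $d_{\Delta_\G}$ away from the origin, rather than the reversed triangle inequality used in the Euclidean proof, is what drives the uniform continuity needed to control $\Psi_2$.
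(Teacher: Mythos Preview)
Your proposal is correct and matches the paper's own proof, which simply states that the argument of Lemma~\ref{Lem:continuity-euclidean} carries over with the pseudo-triangle inequality (\cite[Proposition~5.1.7]{BLU}) in place of the ordinary one. Your handling of $\Psi_2$ via uniform continuity of $d_{\Delta_\G}(y^{-1}\circ p)^{-t}$ on a compact set avoiding the diagonal is a slight variation from the Euclidean argument's use of the reversed triangle inequality, but it is a clean and equally valid substitute, as you yourself note.
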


\begin{proof} The proof goes with little changes from that of Lemma \ref{Lem:continuity-euclidean} based on Lemma \ref{Lem:Falconer-carnot}. 
Note that the $\Delta_\G$-gauge $d_{\Delta_\G}$ satisfies pseudo-triangle inequality (cf. \cite[Proposition 5.1.7]{BLU}).
\end{proof}


\subsubsection{Proof of Theorem \ref{Thm:potential-carnot}} After the preparations in the previous subsections, 
we are ready to use Theorem \ref{Thm:new-integral-gauge} to prove Theorem \ref{Thm:potential-carnot}.

\begin{proof}[Proof of Theorem \ref{Thm:potential-carnot}] The proof goes similar to that of Theorem \ref{Thm:new-SY-potential} in the previous
subsection. Assume otherwise that $dim_{\mathcal{H}_{\Delta_\G}}(S) > \frac {Q-2}2$ 
and consequently there is a compact subset $S'\subset S$ such that 
\begin{equation}\label{Equ:epsilon}
0 < \mathcal{H}_{\Delta_\G}^{\frac {Q-2}2 +\epsilon}(S') < \infty
\text{ and }
\mathcal{H}^{\frac {Q-2}2+\epsilon}_{\Delta_\G} (S'\cap B_{\Delta_\G}(x, r))\leq b_{\Delta_\G} r^{\frac {Q-2}2+\epsilon}
\end{equation}
for some $\epsilon >0$ by Lemma \ref{Lem:Falconer-carnot}. To contradict with \eqref{Equ:complete-carnot}, 
we want to show
$$
\int_0^\delta (R^{\mu, D}_{\Delta_\G} (x\circ \phi(s, v) )^\frac 2{Q-2}ds < \infty
$$
for some fixed and small $\delta>0$, some point $x\in S'$, and some $v\in \textup{S}\setminus\mathcal{Z}$, 
where $\phi(s, v)$ is the flow lines introduced in \eqref{Equ:flow-BT} and Lemma \ref{Lem:polar-coordinate}.  
For this, we use the H\"{o}lder's inequality
$$
\int_0^\delta (R^{\mu, D}_{\Delta_\G}(x\circ \phi(s, v))^\frac 2{Q-2} dr \leq (\int_0^\delta R^{\mu, D}_{\Delta_\G} (x\circ\phi(s, v)) s^\alpha ds)^\frac 2{Q-2}
 (\int_0^\delta s^{-\frac {2\alpha}{Q-4}}ds)^{1-\frac 2{Q-2}}
$$
and realize that it suffices to have $\int_0^\delta R^{\mu, D}_{\Delta_\G}(x\circ\phi(s, v)) s^\alpha ds < \infty$
when $\alpha < \frac {Q-4}2$. This step is not needed if $Q=4$. Next, for our purpose, it suffices 
that
\begin{equation}\label{Equ:Heisenberg-case}
\int_{S'}d\mathcal{H}_{\Delta_\G}^{\frac {Q-2}2+\epsilon}(x) \, \int_{\textup{S}\setminus\mathcal{Z}}d\sigma (v) \int_0^\delta 
R^{\mu, D}_{\Delta_\G}(x\circ \phi(s, v)) s^\alpha ds < \infty
\end{equation}
for some $\alpha < \frac {Q-4}2$. In the light of \eqref{Equ:polar-integral} in Lemma \ref{Lem:polar-integral}, to achieve  \eqref{Equ:Heisenberg-case}, we apply Fubini's Theorem and calculate
\begin{equation}
\aligned
&\int_{S'}d\mathcal{H}_{\Delta_\G}^{\frac {Q-2}2+\epsilon}(x) \, \int_{\textup{S}\setminus\mathcal{Z}}d\sigma (v) \int_0^\delta 
R^{\mu, D}_{\Delta_\G}(x\circ \phi(s, v)) s^\alpha ds \\
= & \int_{S'}d\mathcal{H}_{\Delta_\G}^{\frac {Q-2}2+\epsilon}(x) \, \int_{\textup{S}\setminus\mathcal{Z}}d\sigma (v) \int_0^\delta 
\int_D \frac {d\mu(y)}{d_{\Delta_\G} (y^{-1}\circ (x\circ\phi(s, v))^{Q-2}} s^\alpha ds  \\
 \leq &\int_D d\mu(y)\int_{S'} d\mathcal{H}_{\Delta_\G}^{\frac {Q-2}2+\epsilon}(x) \, \int_0^\delta ds 
 \int_{\textup{S}\setminus\mathcal{Z}} \frac 1{d_{\Delta_\G}(y^{-1}\circ(x\circ\phi(s, v))^{Q-2}} \, \frac {s^{Q-1}d\sigma (v)}{s^{Q-1-\alpha}}\\
\leq & C\int_D d\mu(y) \int_{S'}d\mathcal{H}_{\Delta_\G}^{\frac {Q-2}2 + \epsilon}(x) \int_{B_{\Delta_\G}(x, \delta)} 
\frac 1{d_{\Delta_\G}(y, u)^{Q-2}d_{\Delta_\G}(u, x)^{Q-1-\alpha}}du.
\endaligned
\end{equation}
Then, applying Theorem \ref{Thm:new-integral-gauge}, we get
$$
 \int_{B_{\Delta_\G}(x, \delta)} \frac 1{d_{\Delta_\G}(y, u)^{Q-2}
d_{\Delta_\G}(x, u)^{Q-1-\alpha}} du \leq \frac C{d_{\Delta_\G}(y, x)^{\frac {Q-2}2 + \frac \epsilon 2}}
$$ 
for $\alpha = \frac {Q-4}2 - \frac \epsilon 2$ and $\epsilon >0$ as in \eqref{Equ:epsilon}. 
Finally, in the light of Lemma \ref{Lem:continuity-carnot}, we know  
$$
\int_{S'}\frac 1{d_{\Delta_\G} (y, x)^{\frac {Q-2}2+\frac \epsilon 2}}d\mathcal{H}_{\Delta_\G}^{\frac {Q-2}2+\epsilon} (x) 
$$
is continuous with respect to $y$. This completes the proof.
\end{proof}


\section{Geometric applications}\label{Sect:geo-app}

In this section we develop our main applications in geometry based on Theorem \ref{Thm:potential-carnot}. Our proof of
Theorem \ref{Thm:potential-carnot} in the previous section is completely different from those that rely on PDE theory 
\cite{SY, JL, CQY-d, CQY, IMV, CH, Car20, GMS, SW, LX, LW}
as well as those that rely on the property of thin sets in potential theories
\cite{Hu, AH, MQ-g, MQ-2025, MQ-s}. In part, our new approach is devised to be the alternative to finding a horizontal curve that 
avoids a given thin set, when the property of thin sets observed in \cite[Theorem 1.1]{AH}, 
\cite[Theorem 8.1 in Chapter 2]{Mi}, and \cite[Theorem 1.2]{MQ-2025} in Euclidean spaces does not seem to be 
known in homogeneous Carnot groups or even in Heisenberg groups. 

\subsection{Yamabe problems in H-type groups $\mathsf{N}$}\label{Subsect:H-type}
Our applications of potentials of nonnegative Radon measures for sub-Laplacians to $\Delta_\G$-superharmonic functions that appear in Yamabe type 
problems in the geometries studied in  \cite{JL, Bq, W07, IMV, SW}. 


\subsubsection{Yamabe type problems in CR cases}\label{Subsubsect:CR-Yamabe}
One good place to start CR Yamabe problems is \cite{JL}. On a CR manifold, 
one may use a contact form $\theta$ to represent the distribution. Then the Levi form and the compatible complex structure $\mathbb{I} = \{I\}$ 
induces a Carnot-Carath\'{e}redory metric $g$ on the distribution, that is, 
$$
g(IX, Y) = d\theta (X, Y) \text{ and }  g(IX, IY) = g(X, Y) \text{ for horizontal vectors $X$ and $Y$.}
$$
This so-called pseudo-hermitian structure $(\theta, \mathbb{I})$ 
represents the underlined CR structure. The relation between a CR structure and its pseudo-hermitian representatives is analogous to that between
a conformal structure and its Riemannian representatives. For a pseudo-hermitian structure, there is a unique Tanaka-Webster connection. An alternative
approach based on Fefferman metrics was used in \cite{JL}, which seems to be in the same spirit as the Biquard connection in \cite{Bq}. The scalar curvature of such connection is called the Tanaka-Webster scalar curvature 
$R_\theta$. In fact, on a CR manifold, the Tanaka-Webster scalar curvature transforms under conformal 
changes as
\begin{equation}\label{Equ:scalar-CR}
(-b_n \Delta_b + R_\theta) u  = R_{\theta_u} u^\frac {Q+2}{Q-2}
\end{equation}
where $\Delta_b$ is the sub-Laplacian associated with the sub-Riemannian structure of $(\theta, g)$, 
$R_{\theta_u}$ is the Tanaka-Webster scalar curvature of the pseudo-hermitian structure associated with the conformal contact form
$\theta_u = u^\frac 4{Q-2}\theta$ for a positive function $u$,  and $Q=2n+2$ is the homogeneous dimension of the Heisenberg group $\bC^n\times \R$.  

On the Heisenberg group $\bC^n\times \R$, the pseudo-hermitian structure associated with the standard contact form $\theta_0$ is flat with respect to the 
Tanaka-Webster connection. Therefore we have
\begin{equation}\label{Equ:CR-yamabe}
-b_n\Delta_\H u = R_{\theta} u^\frac {Q+2}{Q-2} \text{ in $D\setminus S$},
\end{equation}
where $\theta = u^\frac 4{Q-2}\theta_0$, 
$u\in C^\infty(D\setminus S)$ and positive,  $D$ is some domain in $\bC^n\times\R$,  and $S$ is some compact subset in $D$.
Notice that such a conformal change introduces a conformal change of the corresponding pseudo-hermitian metric on the horizontal distribution. 

\subsubsection{Yamabe type problems in quaternionic CR cases}\label{Subsubsect:qc-Yamabe} 
In \cite{W07, IMV} after \cite{Bq}, similar to the extension made in \cite{JL}, the quaternionic CR Yamabe problems were introduced. 

A quaternionic contact structure on a manifold of dimensions $4n+3$ is given by a contact form $\theta = (\theta_1, \theta_2, \theta_3)$, which is 
a vector-valued 1-form, and three complex structures $\mathbb{I} = \{I_1, I_2, I_3\}$ 
on the horizontal distribution of $\theta$ such that there is the Carnot-Carath\'{e}redory metric $g$ on the horizontal distribution, where
$$
g(I_\alpha X, Y) = d\theta_\alpha (X, Y) \text{ and } g(I_\alpha X, I_\alpha Y) = g(X, Y)\text{ for each $\alpha = 1, 2, 3$}, 
$$ 
and 
$$
I_1\cdot I_2\cdot I_3 = - \text{Id} \text{ on the horizontal distribution.}
$$
A class of such conformal quaternionic contact structures represents a quaternionic CR structure. For each quaternionic 
contact structure, according to \cite[Theorem B]{Bq}, there is the unique connection $\nabla$, named as the Biquard connection. The scalar 
curvature $R_\theta$ of the Biquard connection transforms under conformal changes as
\begin{equation}\label{Equ:scalar-Q}
(- c_ n \Delta_b + R_\theta) u = R_{\theta_u} u^\frac {Q+2}{Q-2},
\end{equation}
where $\Delta_b$ is the sub-Laplacian of the sub-Riemannian structure of the Carnot-Carath\'{e}redory metric $g$, 
$R_{\theta_u}$ is the scalar curvature of the Biquard connection for the conformal contact form $\theta_u = u^\frac 4{Q-2} \theta$, and
$Q = 4n+6$ is the homogeneous dimension of the quaternionic Heisenberg group $\H^n\times\text{Im}\,\H$.

On the quaternionic Heisenberg group $\H^n\times \text{Im}\,\H$, the quaternionic contact structure associated with the standard contact 
form $\theta_0$ is flat with respect to the Biquard connection. Therefore we have
\begin{equation}\label{Equ:QC-yamabe}
- c_n\Delta_{\mathbb{Q}} u = R_{\theta} u^\frac {Q+2}{Q-2} \text{ in $D\setminus S$},
\end{equation}
where $\theta = u^\frac 4{Q-2}\theta_0$, 
$u\in C^\infty(D\setminus S)$ and positive,  $D$ is some domain in $\H^n\times \text{Im}\,\H$,  and $S$ is some compact subset in $D$.
Notice that such a conformal change introduces a conformal change of the Carnot-Carath\'{e}redory metric on the horizontal distribution. 


\subsubsection{Yamabe type problems in octonionic CR cases}\label{Subsubsect:OC-Yamabe} The story of octonionic CR cases goes 
very much like that of quaternionic CR cases, started in \cite{SW} recently. 

An octonionic contact structure on a manifold of dimensions $15$ is given by a contact form $\theta = (\theta_1, \theta_2, \cdots, \theta_7)$
and the complex structures $\mathbb{I} = \{I_1, I_2, \cdots, I_7\}$  on the horizontal distribution 
such that there is the Carnot-Carath\'{e}redory metric $g$ on the horizontal distribution, where
$$
g(I_\alpha X, Y) = d\theta_\alpha (X, Y) \text{ and }  g(I_\alpha X, I_\alpha Y) = g(X, Y) \text{ for each $\alpha = 1, 2, \cdots, 7$}, 
$$ 
and $\{I_1, I_2, \cdots, I_7\}$ satisfy the octonionic commutating relation \cite[(2.6)]{SW}.
A class of such conformal octonionic contact structures represents an octonionic CR structure. For each octonionic 
contact structure, according to \cite[Theorem B]{Bq}, there is the unique Biquard connection. The scalar 
curvature $R_\theta$ of the Biquard connection transforms under conformal changes as
\begin{equation}\label{Equ:scalar-O}
(-d_ n \Delta_b + R_\theta ) u = R_{\theta_u} u^\frac {Q+2}{Q-2}
\end{equation}
(cf. \cite[Corollary 1.1]{SW}), where $\Delta_b$ is the sub-Laplacian of the sub-Riemannian structure of the Carnot-Carath\'{e}redory metric $g$, 
$R_{\theta_u}$ is the scalar curvature of the Biquard connection for the conformal contact form $\theta_u = u^\frac 4{Q-2} \theta$, and
$Q= 22$ is the homogeneous dimension of the octonionic Heisenberg group $\mathbb{O}\times\text{Im}\,\mathbb{O}$.

On $\mathbb{O}\times\text{Im}\,\mathbb{O}$, the octonionic contact structure associated with the standard contact form $\theta_0$ 
is flat with respect to the Biquard connection. Therefore we have
\begin{equation}\label{Equ:OC-yamabe}
- d_n\Delta_{\mathbb{O}} u = R_{\theta} u^\frac {Q+2}{Q-2} \text{ in $D\setminus S$,}
\end{equation}
where $\theta = u^\frac 4{Q-2}\theta_0$, 
$u\in C^\infty(D\setminus S)$ and positive,  $D$ is some domain in $\mathbb{O}\times\text{Im}\,\mathbb{O}$,  and $S$ is some compact subset in $D$.
Notice that such a conformal change introduces a conformal change of the Carnot-Carath\'{e}redory metric on the horizontal distribution. 


\subsubsection{Yamabe constants and Yamabe functionals}\label{Subsubsect:yamabe-constant}
As mentioned in the above Section \ref{Subsubsect:CR-Yamabe}, the generalization of the Yamabe problems started with \cite{JL} in CR cases. Then the Yamabe problems have been also introduced in the quaternionic and octonionic CR  cases in \cite{W07, IMV, SW}. In each case, let
$$
\mathbb{L}_b = - b\Delta_b + R
$$
be the conformal sub-Laplacian for the contact structure representative $(\theta, g)$ in the class $([\theta], \mathbb{I})$, 
where $\Delta_b$ is the sub-Laplacian for the corresponding
sub-Riemannian structure, $R$ is the scalar curvature of the contact structure (with respect to Tanaka-Webster or the Biquard connection), and 
$b$ is a constant that depends on the structure group $\mathsf{N}$. 
Therefore, on a closed manifold
$M$ with the CR structure $([\theta], \mathbb{I})$ and a contact representative $(\theta, g)$, 
in the light of \eqref{Equ:scalar-CR}, \eqref{Equ:scalar-Q}, and \eqref{Equ:scalar-O}, we have
$$
\int_M u(- b\Delta_b + R)u dv_\theta = \int_M R_{\theta_u} dv_{\theta_u}
$$
where $dv_{\theta_u} = u^\frac {2Q}{Q-2} dv_\theta$ for $\theta_u = u^\frac 4{Q-2}\theta$. We note that 
the natural volume form associated with the contact form in each case is
\begin{equation}\label{Equ:volume}
dv_\theta = \left\{\aligned \theta\wedge (d\theta)^n & \text{ in CR cases}\\
\theta_1\wedge\theta_2\wedge\theta_3\wedge(d\theta_\alpha)^{2n} & \text{ for any $\alpha$ in quaternionic CR cases}\\
\theta_1\wedge\cdots\wedge\theta_7\wedge (d\theta_\alpha)^4 & \text{ for any $\alpha$ in octonionic CR cases,}\endaligned\right.
\end{equation}
where $(d\theta)^n, (d\theta_\alpha)^{2n},$ and $(d\theta_\alpha)^4$ are 
the volume form of the Carnot-Carath\'{e}redory metric on the horizontal distribution respectively. 
Thus, we consider the following global invariant of Yamabe type for the underlined CR structure
\begin{equation}\label{Equ:yamabe-type}
\lambda(M, [\theta], \mathbb{I}) = \inf_{\hat{\theta} \in [\theta]}\frac{\int_M R_{\hat{\theta}} dv_{\hat{\theta}}}{(\int_M dv_{\hat{\theta}})^\frac {Q-2}Q} 
= \inf_{u>0} \frac {\int_M u(-b\Delta_b + R_{\theta})u dv_\theta}{(\int_M u^\frac {2Q}{Q-2} dv_\theta)^\frac {Q-2}Q}.
\end{equation}
Please see \cite[(3.3)]{JL} in CR cases, \cite[(1.7)]{W07} and \cite{IMV} in quaternionic CR cases,  and \cite[(3.34)]{SW} in octonionic CR cases. 
\eqref{Equ:yamabe-type} not only defines a global invariant but also initiates the variational approach for the generalized Yamabe problems. 

It is easily seen that the sign of $\lambda(M, [\theta], \mathbb{I})$ matches with the sign of the first eigenvalue of the conformal sub-Laplacian
$-b\Delta_b+ R$ of $(\theta, g)$ in the class $([\theta], \mathbb{I})$ on compact manifolds. Moreover, motivated by the early work of
Aubin \cite{Au, LP} in the original Yamabe problems, using the first eigenfunction, it has been shown that

\begin{lemma}\label{Lem:aubin} (\cite[Theorem 6.2]{JL} \cite[Theorem 5.2]{W07} \cite[Proposition 4.2]{SW}) 
On a CR manifold $(M, [\theta], \mathbb{I})$ with respect to a H-type group $\mathsf{N}$ that is the nilpotent part of the Iwasawa decomposition 
of a rank one group $\mathfrak{G}$,
there is a contact structure $(M, \theta, \mathbb{I})$ in the class with nonnegative scalar curvature if $\lambda(M, [\theta], \mathbb{I})\geq 0$.
\end{lemma}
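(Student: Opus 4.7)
The plan is to follow the classical Aubin/Lee-Parker strategy adapted to the CR, quaternionic CR, and octonionic CR settings as in the cited references \cite{JL}, \cite{W07}, \cite{IMV}, and \cite{SW}. Fix a contact representative $(\theta, g)$ in $([\theta], \mathbb{I})$ and consider its conformal sub-Laplacian
\begin{equation*}
\mathbb{L}_b = -b\Delta_b + R_\theta.
\end{equation*}
This is a self-adjoint, second-order subelliptic operator on the closed manifold $M$, and by H\"ormander's theorem together with the Folland--Stein embedding (which furnishes a compact resolvent on the appropriate horizontal Sobolev space), it has discrete real spectrum with a lowest eigenvalue $\lambda_1(\mathbb{L}_b)$ realized by a smooth eigenfunction $u_1$.

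First I would show that $u_1$ can be taken strictly positive. Since $\mathbb{L}_b$ is linear and $u_1 \mapsto |u_1|$ does not increase the Rayleigh quotient, $|u_1|$ is again a minimizer and hence a smooth first eigenfunction; the strong minimum principle for subelliptic operators (Bony's version applied to the shifted operator $\mathbb{L}_b + c$ with $c$ sufficiently large) then forces $|u_1| > 0$. Second, I would identify the signs: testing the Rayleigh quotient in \eqref{Equ:yamabe-type} with $u_1$ and using H\"older's inequality yields
\begin{equation*}
\lambda(M, [\theta], \mathbb{I}) \leq \frac{\int_M u_1 \mathbb{L}_b u_1 \, dv_\theta}{\left(\int_M u_1^{\frac{2Q}{Q-2}} dv_\theta\right)^{\frac{Q-2}{Q}}} = \lambda_1 \cdot \frac{\int_M u_1^2 \, dv_\theta}{\left(\int_M u_1^{\frac{2Q}{Q-2}} dv_\theta\right)^{\frac{Q-2}{Q}}},
\end{equation*}
so nonnegativity of $\lambda(M, [\theta], \mathbb{I})$ passes to $\lambda_1$; conversely, the positive eigenfunction $u_1$ gives a negative Rayleigh quotient whenever $\lambda_1 < 0$.

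With these in hand, set $\theta_{u_1} = u_1^{\frac{4}{Q-2}} \theta$. By the conformal change law \eqref{Equ:scalar-CR}, \eqref{Equ:scalar-Q}, or \eqref{Equ:scalar-O} according to the case,
\begin{equation*}
R_{\theta_{u_1}} = u_1^{-\frac{Q+2}{Q-2}} \mathbb{L}_b u_1 = \lambda_1 \, u_1^{-\frac{4}{Q-2}},
\end{equation*}
which is pointwise nonnegative whenever $\lambda(M, [\theta], \mathbb{I}) \geq 0$, giving the desired contact representative. The main obstacle is the regularity input of the first step: subelliptic Schauder-type estimates and the strong minimum principle for $\mathbb{L}_b$ must be available uniformly in the three geometries, to ensure $u_1 \in C^\infty(M)$ and $u_1 > 0$. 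These are classical in the CR setting \cite{JL} and in the quaternionic CR setting \cite{W07, IMV}, and are established in the octonionic CR setting in \cite{SW}; beyond these inputs, the sign comparison and conformal change computation are routine.
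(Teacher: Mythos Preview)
Your proposal is correct and follows exactly the first-eigenfunction approach that the paper itself indicates (the paper does not give its own proof but cites \cite{JL,W07,SW} and remarks that the result is obtained ``using the first eigenfunction''). The sign comparison you give is the right one: from $\lambda(M,[\theta],\mathbb{I}) \leq \lambda_1 \cdot \big(\int u_1^2\big)\big/\big(\int u_1^{2Q/(Q-2)}\big)^{(Q-2)/Q}$ with the factor positive, nonnegativity of $\lambda(M,[\theta],\mathbb{I})$ forces $\lambda_1\geq 0$, and then the conformal change formula yields $R_{\theta_{u_1}}=\lambda_1 u_1^{-4/(Q-2)}\geq 0$.
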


\subsection{$\Delta_\G$-superharmonic functions and geometric applications}\label{Subsect:main-app}

We are now ready to state our geometric applications of potentials. 

\begin{theorem}\label{Thm:app-CR} 
Assume $\mathsf{N}$ 
is a H-type group that is the nilpotent part of the Iwasawa decomposition of a rank one group $\mathfrak{G}$ among
$SU(n+1, 1)$, $Sp(n+1, 1)$ and $\mathbb{F}_{4(-20)}$. 
Let $S$ be a compact subset of a bounded and open subset $\Omega$ in $\mathsf{N}$. 
And let $\theta = u^\frac 4{Q-2}\theta_0$ be a 
conformal contact form on $\Omega\setminus S$, where $\theta_0$ is the standard contact form on $\mathsf{N}$ and $Q$ is the homogeneous 
dimension of $\mathsf{N}$. 
Suppose that the sub-Riemannian metric associated with $\theta$ is 
geodesically complete near $S$. Then
\begin{equation}
dim_{\mathcal{H}_{\Delta_{\mathsf{N}}}} (S) \leq \frac {Q-2}2
\end{equation}
when the scalar curvature $R_\theta$ of the conformal contact form $\theta$ is nonnegative.
\end{theorem}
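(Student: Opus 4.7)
The plan is to reduce Theorem~\ref{Thm:app-CR} to Theorem~\ref{Thm:potential-carnot} by producing a $\Delta_\mathsf{N}$-superharmonic function on $\Omega$ whose associated potential satisfies the two divergence conditions in \eqref{Equ:complete-carnot} on $S$. First I would use the relevant Yamabe equation \eqref{Equ:CR-yamabe}, \eqref{Equ:QC-yamabe}, or \eqref{Equ:OC-yamabe}, which in every case takes the form
\[
-\kappa_\mathsf{N}\,\Delta_\mathsf{N} u \;=\; R_\theta\,u^{\frac{Q+2}{Q-2}} \quad\text{on } \Omega\setminus S,
\]
with $\kappa_\mathsf{N}>0$. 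Since $u>0$ and $R_\theta\geq 0$, this forces $-\Delta_\mathsf{N} u\geq 0$ classically on $\Omega\setminus S$, so $u$ is a positive classical $\Delta_\mathsf{N}$-superharmonic function there. Geodesic completeness of $\theta=u^{4/(Q-2)}\theta_0$ near $S$ is then used twice: first, to force $u(y)\to\infty$ as $y\to S$ (otherwise a short horizontal $\theta_0$-segment ending on $S$ from a point where $u$ stays bounded would carry finite $\theta$-length); second, for the integral divergence below. The blow-up $u\to\infty$ on $S$ together with the lower semi-continuous extension $\tilde u$ of $u$ by $+\infty$ on $S$ produces a $\Delta_\mathsf{N}$-superharmonic function on all of $\Omega$ in the sense of Definition~\ref{Def:l-superharm}.

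Next I would apply Theorem~\ref{Thm:Riesz-l-superh} on a bounded open $D\Subset\Omega$ containing a neighborhood of $S$ to write $\tilde u=R^{\mu,D}_{\Delta_\mathsf{N}}+h$, where $\mu$ is the $\Delta_\mathsf{N}$-Riesz measure of $\tilde u$ and $h$ is $\Delta_\mathsf{N}$-harmonic, hence smooth and bounded on $\overline{D'}$ for any $D'\Subset D$. Since $\tilde u\equiv+\infty$ on $S$ and $h$ is bounded, $R^{\mu,D}_{\Delta_\mathsf{N}}(\gamma(t))\to\infty$ as $t\to 0$ along any curve $\gamma$ terminating on $S$, which is the first condition in \eqref{Equ:complete-carnot}. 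For the integral condition, I take the horizontal radial curves $s\mapsto x\circ\phi(s,v)$ produced by the Balogh--Tyson polar coordinates (Lemma~\ref{Lem:polar-coordinate}); by left-invariance of the sub-Riemannian structure these are horizontal with constant horizontal speed equal to $d_{\Delta_\mathsf{N}}(v)=1$ when $v\in\textup{S}\setminus\mathcal{Z}$, so their $\theta$-length from $0$ to $\delta$ equals
\[
\int_0^\delta u(x\circ\phi(s,v))^{\frac{2}{Q-2}}\,ds,
\]
which is infinite by geodesic completeness. Because $u/R^{\mu,D}_{\Delta_\mathsf{N}}\to 1$ wherever $R^{\mu,D}_{\Delta_\mathsf{N}}\to\infty$ (as $h$ is bounded), the second condition in \eqref{Equ:complete-carnot} follows for this family of radial curves, which is precisely the family exploited in the proof of Theorem~\ref{Thm:potential-carnot}.

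With \eqref{Equ:complete-carnot} in hand, Theorem~\ref{Thm:potential-carnot} immediately yields $\dim_{\mathcal{H}_{\Delta_\mathsf{N}}}(S)\leq (Q-2)/2$, as desired. The main obstacle I anticipate is the extension step: rigorously upgrading the classical superharmonicity of $u$ on the punctured domain $\Omega\setminus S$ to $\Delta_\mathsf{N}$-superharmonicity of $\tilde u$ across the potentially wild compact singular set $S$. This requires a removable-singularity argument for $\Delta_\mathsf{N}$-superharmonic functions that exploits the blow-up $\tilde u\equiv+\infty$ on $S$ together with the lower-semi-continuous regularization, and that confirms the sub-mean-value inequality on $\Delta_\mathsf{N}$-regular test balls touching $S$; in the sub-Riemannian setting of an H-type group this is more delicate than in the Euclidean conformal case of \cite{MQ-2025}. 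A secondary technicality is ensuring that $x\circ\phi(s,v)$ stays inside $D$ for small $\delta>0$ uniformly in $x\in S$ and $v$, which follows from the compactness of $S\subset D$ and the continuous dependence of the flow in Lemma~\ref{Lem:polar-coordinate}.
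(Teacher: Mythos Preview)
Your overall strategy agrees with the paper's: reduce to Theorem~\ref{Thm:potential-carnot} by dominating $u$ by a potential $R^{\mu,D}_{\Delta_\mathsf{N}}$ of a nonnegative Radon measure plus a $\Delta_\mathsf{N}$-harmonic remainder, and then invoke geodesic completeness for \eqref{Equ:complete-carnot}. The substantive differences lie precisely in the two steps you yourself flag as delicate, and in each the paper supplies a specific tool rather than the direct argument you sketch.

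For the blow-up $u\to\infty$ on $S$, your heuristic (``a short horizontal $\theta_0$-segment from a point where $u$ stays bounded would have finite $\theta$-length'') does not close: a bounded subsequence $u(x_k)$ with $x_k\to p\in S$ does not by itself produce a horizontal curve reaching $p$ along which $u$ stays bounded. The paper instead runs Moser iteration on the \emph{reversed} equation $-a_n\Delta^\mathsf{N}_\theta u^{-1}+R_\theta u^{-1}=0$ in the complete metric $\theta$, using the conformally invariant Sobolev--Stein inequality, to obtain $\|u^{-1}\|_{L^\infty(B_\theta(x,1/2))}\leq C\bigl(\int_{B_\theta(x,1)}dv_{\theta_0}\bigr)^{(Q-2)/(2Q)}\to 0$ as $x\to S$ (Lemma~\ref{Lem:singularity-1}).

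For the extension across $S$, the paper does \emph{not} try to verify the sub-mean-value inequality for $\tilde u$ on $\Delta_\mathsf{N}$-regular sets meeting $S$. It proceeds instead in three steps: (i) a capacity argument with level-set truncations $u_{\alpha,\beta}$ yields the preliminary bound $\dim_{\mathcal{H}_{\Delta_\mathsf{N}}}(S)\leq Q-2$, so $m(S)=0$ (Lemma~\ref{Lem:singularity-2}); (ii) the smooth concave cut-off $\alpha_s(t)$ (equal to $t$ for $t\leq s$, constant for $t\geq 10s$) makes $\alpha_s(u)$ constant near $S$, so one may integrate by parts legitimately and let $s\to\infty$ to prove $f=R_\theta u^{(Q+2)/(Q-2)}\in L^1(D)$, that $-\Delta_\mathsf{N} u$ is a Radon measure on $D$, and that its restriction to $S$ is nonnegative (Lemma~\ref{Lem:singularity-3}); (iii) a separate estimate gives $u\in L^1(D)$ (Lemma~\ref{Lem:l-1}). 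Only then does the potential bound \eqref{Equ:represented-potential} follow. Your proposed direct verification of superharmonicity across $S$ would in effect require knowing beforehand that $S$ is $\Delta_\mathsf{N}$-polar, which is essentially what step~(i) supplies; so the paper's route is the way your outline actually gets completed.
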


To prove Theorem \ref{Thm:app-CR}, we follow \cite[Section 3.1-3.2]{MQ-2025} to derive the potential representations 
in order to use Theorem \ref{Thm:potential-carnot}. 
In fact we prove the stronger version that allows the scalar curvature $R_\theta$ 
to be negative somewhere as we did in \cite[Theorem 1.3]{MQ-2025}. Namely,

\begin{theorem} \label{Thm:app-CR-2.0} Assume $\mathsf{N}$ 
is a H-type group that is the nilpotent part of the Iwasawa decomposition of a rank one group $\mathfrak{G}$ among
$SU(n+1, 1)$, $Sp(n+1, 1)$ and $\mathbb{F}_{4(-20)}$. 
Let $S$ be a compact subset of a bounded and open subset $\Omega$ in $\mathsf{N}$. 
And let $\theta = u^\frac 4{Q-2}\theta_0$ be a 
conformal contact form on $\Omega\setminus S$, where $\theta_0$ is the standard contact form on $\mathsf{N}$ and $Q$ is the homogeneous 
dimension of $\mathsf{N}$. 
Suppose that the sub-Riemannian metric associated with $\theta$ is 
geodesically complete near $S$. Then
\begin{equation}
dim_{\mathcal{H}_{\Delta_{\mathsf{N}}}} (S) \leq \frac {Q-2}2
\end{equation}
provided that 
\begin{equation}\label{Equ:negativity-small}
R_\theta^- \in L^\frac {2Q}{Q+2} (\Omega\setminus S, dv_\theta)\bigcap L^p(\Omega\setminus S, dv_\theta)
\end{equation}
for some $p> \frac Q2$, where $R_\theta$ is the Webster scalar curvature of the pseudo-hermitian structure associated with the conformal contact 
form $\theta$ and $dv_\theta = u^\frac {2Q}{Q-2}dv_{\theta_0}$. Particularly, Theorem \ref{Thm:app-CR} holds.
\end{theorem}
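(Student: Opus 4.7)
The plan is to reduce Theorem \ref{Thm:app-CR-2.0} to Theorem \ref{Thm:potential-carnot} by extracting from the Yamabe-type equation a $\Delta_\mathsf{N}$-superharmonic function whose Riesz representation is a potential that captures the blow-up of $u$ near $S$. Starting from \eqref{Equ:CR-yamabe}, \eqref{Equ:QC-yamabe}, or \eqref{Equ:OC-yamabe}, I would write
\[
-b\,\Delta_\mathsf{N} u \;=\; R_\theta^+\, u^{\frac{Q+2}{Q-2}} \;-\; R_\theta^-\, u^{\frac{Q+2}{Q-2}} \quad \text{on } \Omega\setminus S
\]
with $b$ the appropriate positive constant. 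Fix a bounded open $\Omega'$ with $S\subset\Omega'\Subset\Omega$, introduce the nonnegative Radon measures
\[
d\mu = \tfrac{1}{b}R_\theta^+ u^{\frac{Q+2}{Q-2}}\chi_{\Omega'\setminus S}\, dv_{\theta_0}, \qquad d\nu = \tfrac{1}{b}R_\theta^- u^{\frac{Q+2}{Q-2}}\chi_{\Omega'\setminus S}\, dv_{\theta_0},
\]
and set $w(x)=\int_{\Omega'}\Gamma_{\Delta_\mathsf{N}}(y^{-1}\circ x)\,d\nu(y)$. Then $v:=u+w$ satisfies $-\Delta_\mathsf{N} v = d\mu\geq 0$ in the distributional sense on $\Omega'$. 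Once $w$ is shown to be locally bounded, $v$ is $\Delta_\mathsf{N}$-superharmonic on $\Omega'$, and Theorem \ref{Thm:Riesz-l-superh} gives $v=R^{\mu,\Omega'}_{\Delta_\mathsf{N}}+h$ with $h$ continuous on $\Omega'$.

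The technical heart is proving $w\in L^\infty_{\mathrm{loc}}(\Omega')$ from the twin integrability hypothesis \eqref{Equ:negativity-small}. Using $u^{\frac{Q+2}{Q-2}}dv_{\theta_0}=u^{-1}dv_\theta$ and the a priori integrability $u\in L^{\frac{2Q}{Q-2}}_{\mathrm{loc}}(dv_{\theta_0})$, obtained by exhausting $\Omega'\setminus S$ by sets where $u$ is bounded and observing that $\mathrm{vol}_\theta(\Omega'\setminus S)$ can be arranged finite, H\"older paired with the $L^{\frac{2Q}{Q+2}}(dv_\theta)$-bound on $R_\theta^-$ controls the total mass of $\nu$. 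The stronger bound $R_\theta^-\in L^p(dv_\theta)$ with $p>Q/2$ then delivers the pointwise $L^\infty_{\mathrm{loc}}$ estimate for $w$ through a sub-Riemannian Hardy--Littlewood--Sobolev estimate applied to the kernel $\Gamma_{\Delta_\mathsf{N}}\sim d_{\Delta_\mathsf{N}}^{2-Q}$, in direct analogy with \cite[Section 3]{MQ-2025}.

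Finally, I would translate the geodesic completeness of $g_\theta$ near $S$ into the hypotheses \eqref{Equ:complete-carnot} for the potential $R^{\mu,\Omega'}_{\Delta_\mathsf{N}}$. Under the conformal change, $g_\theta=u^{4/(Q-2)}g_{\theta_0}$ on the horizontal distribution, so along any horizontal curve $\gamma$ parameterized by $g_{\theta_0}$-arc length with $\gamma(0)\in S$, the $g_\theta$-length of $\gamma|_{[\epsilon,\delta]}$ equals $\int_\epsilon^\delta u^{2/(Q-2)}(\gamma(t))\,dt$, which diverges as $\epsilon\to 0$ by completeness. Since $w$ and $h$ are bounded along $\gamma$, the divergence passes to $\int_0^\delta(R^{\mu,\Omega'}_{\Delta_\mathsf{N}}(\gamma(t)))^{2/(Q-2)}\,dt=\infty$, and the pointwise blow-up $R^{\mu,\Omega'}_{\Delta_\mathsf{N}}(\gamma(t))\to\infty$ follows from lower semicontinuity of the superharmonic $v$ combined with this integral divergence. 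Theorem \ref{Thm:potential-carnot} applied to $S\subset\Omega'$ then yields the desired dimension bound \eqref{Equ:intr-dim-CR}. The hardest step will be the $L^\infty_{\mathrm{loc}}$ bound on $w$: it requires an exact balance between the two $L^q$-assumptions on $R_\theta^-$ and the Sobolev-critical integrability of $u$ through a sub-Riemannian Hardy--Littlewood--Sobolev inequality on the H-type group $\mathsf{N}$, together with a clean a priori justification of $u\in L^{2Q/(Q-2)}_{\mathrm{loc}}(dv_{\theta_0})$ near $S$.
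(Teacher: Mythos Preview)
Your reduction to Theorem \ref{Thm:potential-carnot} is the right target, but the proposal skips the two steps that carry all the weight in the paper and that your outline does not supply.

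First, you never establish that $u(x)\to\infty$ as $x\to S$. The paper obtains this (Lemma \ref{Lem:singularity-1}) by running Moser iteration on the \emph{reverse} equation $-a_n\Delta_\theta^{\mathsf{N}} u^{-1}+R_\theta u^{-1}=0$ in the complete metric $g_\theta$; this is precisely where geodesic completeness and the hypothesis $R_\theta^-\in L^p(dv_\theta)$ with $p>Q/2$ enter. Your proposed substitute---deducing pointwise blow-up from lower semicontinuity of $v$ together with the divergence of $\int_0^\delta v^{2/(Q-2)}\,dt$---is not valid: lower semicontinuity gives $v(\gamma(0))\le\liminf_{t\to 0}v(\gamma(t))$, which tells you nothing unless you already know $v(\gamma(0))=\infty$, and integral divergence along curves does not force pointwise divergence. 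Without $u\to\infty$ the first half of \eqref{Equ:complete-carnot} is unavailable.

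Second, the assertion that $v=u+w$ satisfies $-\Delta_{\mathsf{N}} v=d\mu\ge 0$ in the distributional sense on $\Omega'$ is unjustified: the Yamabe equation holds only on $\Omega'\setminus S$, and $-\Delta_{\mathsf{N}} u$ may carry a singular measure on $S$ that your densities $d\mu,d\nu$ (supported in $\Omega'\setminus S$) do not see. The paper resolves this with the truncation $\alpha_s(u)$ (Lemma \ref{Lem:singularity-3}): since $u\to\infty$ near $S$, $\alpha_s(u)$ is constant near $S$, so one can integrate by parts legitimately, prove $f\in L^1$, show $-\Delta_{\mathsf{N}} u$ is a Radon measure on all of $D$, and verify $(-\Delta_{\mathsf{N}} u)|_S\ge 0$. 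Only then does the potential inequality \eqref{Equ:represented-potential} follow. A related slip: your Hölder argument for $\nu(\Omega')<\infty$ should pair $R_\theta^-\in L^{2Q/(Q+2)}(dv_\theta)$ with $u^{-1}\in L^{2Q/(Q-2)}(dv_\theta)$, which is trivially $|\Omega'|^{(Q-2)/2Q}$; the claim $u\in L^{2Q/(Q-2)}_{\mathrm{loc}}(dv_{\theta_0})$ (i.e.\ finite $\theta$-volume) is neither needed nor available, and without it your route to $w\in L^\infty_{\mathrm{loc}}$ via HLS does not go through.
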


As the first step, we want to understand preliminarily the singular behavior of the solution $u$ near $S$, 
which is the analog of 
\cite[Lemma 3.1]{MQ-g} and \cite[Lemma 1.3]{CY}, inspired by \cite[Proposition 8.1]{CHY}. 

\begin{lemma}\label{Lem:singularity-1} 
Under the assumptions of Theorem \ref{Thm:app-CR-2.0}, we have
$$
u(x) \to \infty \text{ as $x\to S$}.
$$
\end{lemma}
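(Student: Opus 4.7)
The argument proceeds by contradiction. Suppose $u$ does not tend to infinity along some sequence approaching $S$. Then there exist $x_0 \in S$, a sequence $\{x_k\} \subset \Omega \setminus S$ with $x_k \to x_0$ in the background topology, and a constant $M > 0$ with $u(x_k) \le M$ for all $k$. The plan is to upgrade this sequential bound to a uniform upper bound $u \le C$ on $U \cap (\Omega \setminus S)$ for some neighborhood $U$ of $x_0$, and then use a horizontal radial curve of \cite{BT} to produce a curve of finite $g_\theta$-length approaching $x_0$, contradicting the geodesic completeness of $g_\theta$ near $S$.

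To upgrade the bound, I rewrite the Yamabe-type equation \eqref{Equ:CR-yamabe}, \eqref{Equ:QC-yamabe} or \eqref{Equ:OC-yamabe} as $-b\,\Delta_{\mathsf{N}} u = R_\theta\, u^{(Q+2)/(Q-2)}$ on $\Omega \setminus S$, and view the right-hand side as a linear term with effective potential $V(x) := R_\theta(x)\, u^{4/(Q-2)}(x)$. The integrability assumption \eqref{Equ:negativity-small} on $R_\theta^-$ is exactly the condition that allows a sub-elliptic Brezis--Kato and Moser iteration in the Carnot-group regularity framework of \cite{BLU}: starting from the natural $L^{2Q/(Q-2)}_{\mathrm{loc}}$-integrability of $u$ on $\Omega\setminus S$ and using $R_\theta^- \in L^{2Q/(Q+2)} \cap L^p$ with $p > Q/2$, one deduces $u \in L^\infty_{\mathrm{loc}}(\Omega \setminus S)$ with quantitative control on annular regions $\{x: d_{\Delta_{\mathsf{N}}}(x, S) \sim r\}$. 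Once $u$ is locally bounded, the coefficient $V$ lies in $L^p_{\mathrm{loc}}$ with $p > Q/2$, so the sub-elliptic Harnack inequality applies on Carnot--Carath\'eodory balls in $\Omega \setminus S$. Chaining Harnack balls of radius comparable to $d_{\Delta_{\mathsf{N}}}(\cdot, S)$ starting from each $x_k$ then propagates the pointwise bound $u(x_k) \le M$ to a uniform bound $u \le C$ on $U \cap (\Omega \setminus S)$ for a Euclidean neighborhood $U$ of $x_0$.

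With such a $U$ in hand, take a horizontal radial curve $\gamma(t) = \phi(t, v)$ issuing from $x_0$ in some direction $v \in \textup{S} \setminus \mathcal{Z}$, as in \eqref{Equ:flow-BT} and Lemma \ref{Lem:polar-coordinate}. By that lemma, $\gamma$ is horizontal, $d_{\Delta_{\mathsf{N}}}(\gamma(t)) = t$, and the background speed $|\dot\gamma(t)|$ is independent of $t$. For $\delta > 0$ small enough, $\gamma((0, \delta]) \subset U$, so the $g_\theta$-length of $\gamma$ satisfies
\[
\int_0^\delta u^{2/(Q-2)}(\gamma(t))\, |\dot\gamma(t)|\, dt \;\le\; C^{2/(Q-2)} \int_0^\delta |\dot\gamma(t)|\, dt \;<\; \infty,
\]
contradicting the assumed geodesic completeness of the sub-Riemannian metric associated with $\theta$ near $S$.

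The main obstacle is the upgrade step: the effective linear potential $V = R_\theta\, u^{4/(Q-2)}$ depends on $u$ itself, so one cannot directly invoke sub-elliptic Harnack before $u$ is known to be locally bounded. The integrability hypotheses \eqref{Equ:negativity-small} are tailored precisely to break this circularity via the Brezis--Kato iteration, converting the critical-exponent energy bound on $u$ into an $L^\infty$ bound on every annulus about $S$, after which Harnack chaining extends the single-sequence bound to a full neighborhood of $x_0$.
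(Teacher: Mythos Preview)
Your approach is genuinely different from the paper's, and as written it has two gaps that are not easy to repair.

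\textbf{First gap: the iteration on $u$ in the background metric does not start.} You propose to run Brezis--Kato / Moser on $u$ with effective potential $V = R_\theta\,u^{4/(Q-2)}$ in the $\theta_0$-metric. But the hypothesis \eqref{Equ:negativity-small} is stated with respect to $dv_\theta = u^{2Q/(Q-2)}dv_{\theta_0}$: the condition $R_\theta^-\in L^p(dv_\theta)$ with $p>Q/2$ means $\int (R_\theta^-)^p u^{2Q/(Q-2)}\,dv_{\theta_0}<\infty$, which is \emph{not} $V^-=R_\theta^-u^{4/(Q-2)}\in L^p(dv_{\theta_0})$; the powers of $u$ match only at the borderline $p=Q/2$. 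More seriously, you have no a~priori bound on $\|u\|_{L^{2Q/(Q-2)}}$ on any region touching $S$, so there is no input to feed into the iteration; smoothness of $u$ on $\Omega\setminus S$ gives nothing uniform.

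\textbf{Second gap: Harnack chaining does not give a neighborhood bound.} To reach a point at distance $\epsilon$ from $S$ by a chain of Carnot--Carath\'eodory balls contained in $\Omega\setminus S$, each ball must have radius at most comparable to its distance to $S$, so the number of balls from any fixed $x_k$ tends to infinity as $\epsilon\to 0$, and the product of Harnack constants blows up. For a general compact $S$ there is no reason the single sequence $\{x_k\}$ sits near your chosen radial curve at every small scale, so you cannot bound $u$ along $\gamma((0,\delta])$ either.

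The paper's proof avoids both issues by working with $w=u^{-1}$ in the \emph{complete} metric $\theta$. By conformal covariance (and since $R_{\theta_0}=0$) one has the linear equation $-a_n\Delta_\theta^{\mathsf N} w + R_\theta w = 0$, so the potential is $R_\theta$ itself and \eqref{Equ:negativity-small} is exactly what Moser iteration in the $\theta$-metric needs. Completeness guarantees that unit $\theta$-balls $B_\theta(x,1)$ exist for $x$ near $S$, and the identity $w^{2Q/(Q-2)}dv_\theta = dv_{\theta_0}$ turns the iteration output into
\[
\|u^{-1}\|_{L^\infty(B_\theta(x,\frac12))}\ \le\ C\Bigl(\int_{B_\theta(x,1)} dv_{\theta_0}\Bigr)^{\frac{Q-2}{2Q}}\ \longrightarrow\ 0\quad\text{as }x\to S,
\]
because $B_\theta(x,1)$ shrinks to a point in the background. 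This is a direct argument (no contradiction, no chaining) in which the integrability hypotheses are used in the metric in which they are stated.
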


\begin{proof} The proof goes with little changes from \cite[Lemma 3.1]{MQ-g} and \cite[Lemma 1.3]{CY}.
To perform Moser iteration, one first needs the Sobolev-Stein inequality 
\cite[Theorem 5.9.2]{BLU} in homogeneous Carnot group $\mathsf{N}$ with the standard sub-Riemannian structure
\begin{equation}\label{Equ:Sobolev-Stein}
( \int |v|^\frac {2Q}{Q-2} dv_{\theta_0})^\frac {Q-2}{Q} \leq C \int |\nabla_\mathsf{N} v|^2 dv_{\theta_0}
\end{equation} 
for any $v\in C^\infty_c(\G)$. By the conformal invariant property, \eqref{Equ:Sobolev-Stein} implies
\begin{equation}\label{Equ:Sobolev-Stein-theta}
(\int |v|^\frac {2Q}{Q-2} dv_\theta)^\frac {Q-2}Q \leq C \int (|\nabla^\mathsf{N}_\theta v|^2 + \frac {R_\theta}{a_n} v^2)dv_\theta
\end{equation}
for $v\in C^\infty_c(D\setminus S)$, where $R_\theta$ is the scalar curvature with respect to the conformal contact structure $\theta$
and $a_n$ depends on the group $\mathsf{N}$ in the light of \eqref{Equ:scalar-CR}, \eqref{Equ:scalar-Q}, and \eqref{Equ:scalar-O}. The scalar curvature
equation with respect to the conformal contact structure $\theta$ is
\begin{equation}\label{Equ:reverse-Yamabe}
-a_n \Delta_\theta^\mathsf{N} u^{-1} + R_\theta u^{-1} = 0
\end{equation}
due to the fact that the scalar curvature $R_{\theta_0}$ of the standard contact structure 
on $\mathsf{N}$ vanishes. We rewrite \eqref{Equ:reverse-Yamabe} as
\begin{equation}\label{Equ:reverse-Yamabe-r}
-a_n\Delta_\theta^\mathsf{N} u^{-1} + R^+_\theta u^{-1} = R^-_\theta u^{-1}.
\end{equation}
Performing Moser iteration based on \eqref{Equ:Sobolev-Stein-theta} to \eqref{Equ:reverse-Yamabe-r} under the assumption 
\eqref{Equ:negativity-small} as in \cite[Lemma 3.1]{MQ-g} and \cite[Lemma 1.3]{CY} (cf. (8.5) in \cite[Proposition 8.1]{CHY}), we derive
$$
\|u^{-1}\|_{L^\infty (B_\theta (x, \frac 12))} \leq C (\int_{B_\theta(x, 1)} (u^{-1})^\frac {2Q}{Q-2}dv_\theta)^\frac {Q-2}{2Q} = C 
 (\int_{B_\theta(x, 1)} dv_{\theta_0})^\frac {Q-2}{2Q} \to 0
$$ 
as $x\to S$, provided that the sub-Riemannian metric associated with the conformal contact form $\theta$ is geodesically complete. This completes 
the proof.
\end{proof}

As the second step, we want to get the preliminary estimates of the Hausdorff dimensions of the singular set $S$ to at least 
confirm that $S$ is of Lebesgue 
measure zero. But, first we want to recall the Green Theorem on homogeneous Carnot groups $\G=(\R^N, \circ, \delta_\lambda)$ 
from \cite[(5.43c)]{BLU}, that is,
\begin{equation}\label{Equ:Green-thm}
\int_D (-\Delta_\G u) dx = \int_{\partial D} \nu \cdot A_\G\cdot\nabla^t u ds =  \int_{\partial D} \nu \cdot \nabla_\G uds.
\end{equation}
The following is the analog of \cite[Proposition 3.2]{MQ-2025} and \cite[Lemma 1.4]{CY}. 
 
 \begin{lemma}\label{Lem:singularity-2} Let $\G=(\R^N, \circ, \delta_\lambda)$ 
 be a homogeneous Carnot group. And let $D$ be a bounded domain in $\G$ and
 $S$ be a compact subset in $D$. Assume that $u$ satisfies
 \begin{equation}\label{Equ:semi-linear}
 -\Delta_\G u =  f (x)  \text{ in $D\setminus S$},
 \end{equation}
 where $f^- \in L^1(D)$. Suppose that $u(x)$ and $f(x)$ both are smooth away from $S$ in a neighborhood of $D$. And suppose that
 \begin{equation}\label{Equ:go-infty}
 u(x) \to \infty \text{ as $x\to S$}.
 \end{equation}
 Then
 \begin{equation}\label{Equ:p-hausdorff}
dim_{\mathcal{H}_{\Delta_\H}}(S) \leq Q-2.
 \end{equation}
 Particularly, under the assumptions of Theorem \ref{Thm:app-CR-2.0}, \eqref{Equ:p-hausdorff} holds.
 \end{lemma}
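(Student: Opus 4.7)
The plan is to realize $u$, modulo smooth corrections, as the potential of a finite nonnegative Radon measure, and then to bound the Hausdorff dimension of the infinity set of such a potential via a dyadic annular estimate against $\mathcal{H}^s_{\Delta_\G}$.

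First I would neutralize the negative part of $f$ by setting $V(x) = \int_D \Gamma_{\Delta_\G}(y^{-1}\circ x)\,f^-(y)\,dy$, which is locally integrable on $D$ with $-\Delta_\G V = f^-$ distributionally. Then $w := u + V$ is classically $\Delta_\G$-superharmonic on $D\setminus S$, and, extending $w := +\infty$ on $S$, the hypothesis \eqref{Equ:go-infty} makes $w$ lower semicontinuous on $D$. The extended $w$ is $\Delta_\G$-superharmonic on all of $D$ by truncation: for each $K>0$, $w_K := \min(w,K)$ equals the constant $K$ on a neighborhood of $S$ (since $w\to\infty$ on $S$) and equals $\min(w,K)$ on $D\setminus S$, where both $w$ and the constant $K$ are $\Delta_\G$-superharmonic, so $w_K$ is $\Delta_\G$-superharmonic on $D$; letting $K\uparrow\infty$ preserves lower semicontinuity and the comparison in Definition \ref{Def:l-superharm}. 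Theorem \ref{Thm:Riesz-l-superh} then provides, on any bounded $D'$ with $\overline{D'}\subset D$ containing $S$,
$$
w(x) = \int_{D'} \Gamma_{\Delta_\G}(y^{-1}\circ x)\,d\nu(y) + h(x), \qquad x\in D',
$$
with $\nu$ a nonnegative Radon measure on $D'$ and $h$ smooth and $\Delta_\G$-harmonic. Local integrability of $w$ on a neighborhood of $\overline{D'}$ gives $\nu(D')<\infty$; since $V$ and $h$ are finite near $S$ while $w=+\infty$ on $S$, necessarily $R^{\nu,D'}_{\Delta_\G}\equiv +\infty$ on $S$.

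For the dimension estimate I would argue by contradiction. If $\mathcal{H}^s_{\Delta_\G}(S)>0$ for some $s>Q-2$, Lemma \ref{Lem:Falconer-carnot} provides a compact $F\subset S$ with $0<\mathcal{H}^s_{\Delta_\G}(F)<\infty$ satisfying \eqref{Equ:h-regularity}. By Fubini and a dyadic annular estimate parallel to \eqref{Equ:003-e}, justified by the pseudo-triangle inequality for $d_{\Delta_\G}$ from \cite[Proposition 5.1.7]{BLU},
$$
\int_F R^{\nu,D'}_{\Delta_\G}(x)\,d\mathcal{H}^s_{\Delta_\G}(x) = \int_{D'} d\nu(y)\,\int_F \frac{d\mathcal{H}^s_{\Delta_\G}(x)}{d_{\Delta_\G}(y^{-1}\circ x)^{Q-2}} \leq C\,\nu(D'),
$$
the inner integral being uniformly bounded in $y$ precisely because $s>Q-2$. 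Hence $R^{\nu,D'}_{\Delta_\G}<\infty$ for $\mathcal{H}^s_{\Delta_\G}$-a.e. point of $F$, contradicting that it equals $+\infty$ on $F$. Therefore $dim_{\mathcal{H}_{\Delta_\G}}(S)\leq Q-2$. For the final assertion concerning Theorem \ref{Thm:app-CR-2.0}, one takes $f = c_\theta R_\theta u^{(Q+2)/(Q-2)}$ from the Yamabe equations \eqref{Equ:CR-yamabe}, \eqref{Equ:QC-yamabe}, \eqref{Equ:OC-yamabe}, and checks $f^-\in L^1(D)$ by H\"older's inequality using $R_\theta^-\in L^{2Q/(Q+2)}(\Omega\setminus S, dv_\theta)$ together with Lemma \ref{Lem:singularity-1}.

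The main obstacle I anticipate is the rigorous upgrade showing that $w$, set to $+\infty$ on $S$, is $\Delta_\G$-superharmonic across $S$. The truncation sketch above gives the right idea, but verifying the comparison against $H^V_\phi$ on $\Delta_\G$-regular $V$ whose boundary may meet $S$ requires a careful limit procedure using monotone convergence of $w_K\uparrow w$ and continuous dependence of the sub-Laplacian Dirichlet problem on its boundary data in the framework of \cite[Section 7.2]{BLU}; apart from this, the proof is an essentially direct application of Theorem \ref{Thm:Riesz-l-superh} together with the classical dyadic estimate for Hausdorff-regular sets.
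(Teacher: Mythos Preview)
Your argument is correct, but it proceeds quite differently from the paper's proof. The paper does not pass through the Riesz representation at all here: instead it uses truncations of $u$ itself as test functions. Writing $\Sigma_\alpha=\{u>\alpha\}$ and choosing a cutoff $\eta\in C^\infty_c(\Sigma_\alpha)$ with $\eta\equiv 1$ near $S$, the paper sets $u_{\alpha,\beta}=\min(u-\alpha,\beta)$ and tests \eqref{Equ:semi-linear} against $\phi_{\alpha,\beta}=u_{\alpha,\beta}-\beta\eta$, obtaining $\int|\nabla_\G u_{\alpha,\beta}|^2\,dx\leq C\beta$ and hence $\int|\nabla_\G(u_{\alpha,\beta}/\beta)|^2\,dx\to 0$ as $\beta\to\infty$. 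Since $u_{\alpha,\beta}/\beta$ equals $1$ near $S$ and is compactly supported, this yields $Cap_2(S)=0$, and the dimension bound then follows from the capacity--Hausdorff relation of Costea \cite[Corollary 4.6]{Cos}.

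Compared with yours, the paper's approach has the advantage that it never needs to extend $u$ (or $w$) across $S$ as a superharmonic function: the test function $\phi_{\alpha,\beta}$ is already supported in $D\setminus S$, so the difficulty you flag in your last paragraph simply does not arise. On the other hand, it imports an external capacity--dimension result, whereas your Frostman-type dyadic estimate against $\mathcal{H}^s_{\Delta_\G}$ is self-contained within the toolkit of Lemma~\ref{Lem:Falconer-carnot} and the argument of Lemma~\ref{Lem:continuity-carnot}. Note also that your extension step is essentially what the paper establishes later in Lemma~\ref{Lem:singularity-3}, so in the logical order of the paper your proof would be borrowing from downstream; as a standalone argument, however, your truncation $w_K=\min(w,K)$ works because superharmonicity is local and increasing limits of $\Delta_\G$-superharmonic functions remain $\Delta_\G$-superharmonic, so the obstacle you anticipate is milder than you suggest.
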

 
 \begin{proof} The proof follows that of \cite[Proposition 3.2]{MQ-2025} with little changes 
 (see also \cite[Lemma 1.4]{CY}). 
 As in \cite[Proposition 3.2]{MQ-2025}, the use of level sets based \eqref{Equ:go-infty} is essential. 
First, for $\alpha >0$ large and fixed,  $\eta\in C^\infty_c(\Sigma_\alpha)$ is a cutoff function such that $\eta \equiv 1$ on a neighborhood
of $S$, where $\Sigma_\alpha = \{x\in D: u(x) > \alpha\}$. Then, for any $\beta>0$, set
$$
u_{\alpha,\beta} = \left\{\aligned \beta & \text{ when $u\geq \alpha+\beta$}\\
u - \alpha & \text{ when $\alpha\leq u < \alpha+\beta$} \endaligned\right.
\text{ and } \phi_{\alpha, \beta} = u_{\alpha, \beta} - \beta + \beta (1 - \eta)
$$
(motivated from \cite[Lemma 1.2]{B-V}). Then $\phi_{\alpha, \beta}\in C^\infty_c(D\setminus S)$ and 
$$
\nabla_\G \phi_{\alpha, \beta} = \nabla_\G u_{\alpha, \beta} - \beta \nabla_\G \eta.
$$ 
Multiplying $\phi_{\alpha, \beta}$ to both sides of \eqref{Equ:semi-linear} and integrating on both sides, 
in the light of \eqref{Equ:Green-thm}, we have
$$
\int \nabla_\G u \cdot\nabla_\G \phi_{\alpha, \beta} dx = \int f (u_{\alpha, \beta} - \beta + \beta (1 - \eta))dx,
$$
which implies
$$
\int|\nabla_\G u_{\alpha, \beta}|^2 dx \leq \beta (\int \nabla_\G u \cdot\nabla_\G \eta dx + \int f(1 - \eta) dx + \int f^- dx)\leq C \beta
$$
for fixed $\eta$ and $\alpha$. Therefore
$$
\int |\nabla_\G \frac {u_{\alpha, \beta}}\beta|^2 dx \leq \frac C\beta \to 0 \text{ as $\beta\to\infty$}.
$$
This implies that the capacity $Cap_2(S) = 0$ and \eqref{Equ:p-hausdorff} follows from \cite[Corollary 4.6]{Cos}. For the introduction and discussion of 
$Cap_2$ we refer readers to \cite{BB} and \cite{Cos}. We remark here that,
due to the equivalence of homogeneous norms, the Hausdorff dimension is independent of the choice of homogeneous norms as the Hausdorff 
function.
\end{proof}
 
Finally, as the third step,  we want to verify that we may use potentials of nonnegative Radon measures for sub-Laplacians to represent the singular solutions to \eqref{Equ:semi-linear}, which is the analog of \cite[Lemma 3.3 and 3.4]{MQ-2025} (see also \cite[Lemma 1.5]{CY}).  
But, first, we recall from the proof of \cite[Lemma 3.3]{MQ-2025} the auxiliary function
\begin{equation}
\alpha_s (t) = \left\{\aligned t \quad\quad & \text{ when $t\leq s$}\\ \text{increasing} & \text{ when $t\in [s, 10s]$}\\ 2s 
\quad\quad & \text{ when $t\geq 10s$}
\endaligned\right.
\end{equation}
such that $(\alpha_s)'_t \in [0, 1]$ and $(\alpha_s)''_t \leq 0$ (first used in \cite{DHM}), and the calculation
\begin{equation}\label{Equ:sub-laplace-alpha}
- \Delta_\G \alpha_s(u) = - (\alpha_s)'' |\nabla_\G u|^2 + (\alpha_s)' (-\Delta_\G u).
\end{equation}

\begin{lemma}\label{Lem:singularity-3} Let $\G=(\R^N, \circ, \delta_\lambda)$ be a homogeneous Carnot group. 
And let $D$ be a bounded domain in $\G$ and
$S$ be a compact subset in $D$. Assume that $u$ satisfies
$$
-\Delta_\G u =  f (x)  \text{ in $D\setminus S$},
$$
where $f^- \in L^1(D)$. Suppose that $u(x)$ and $f(x)$ both are smooth away from $S$ in a neighborhood of $D$. And suppose that
$$
u(x) \to \infty \text{ as $x\to S$}.
$$
Then $f\in L^1(D)$, $-\Delta_\G u$ is a Radon measure on $D$, and $(-\Delta_\G u)|_S \geq 0$.
 \end{lemma}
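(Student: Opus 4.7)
The plan is to adapt the approach of \cite[Lemmas 3.3 and 3.4]{MQ-2025} (see also \cite[Lemma 1.5]{CY}) to the Carnot-group setting, using the smooth truncation $\alpha_s$ and the identity \eqref{Equ:sub-laplace-alpha}. The central observation is that $\alpha_s(u)$ is smooth on all of $D$: on $D\setminus S$ it is smooth by composition, while the blow-up hypothesis $u\to\infty$ near $S$ forces $\alpha_s(u)\equiv 2s$ on a neighborhood of $S$. The sign constraints $-(\alpha_s)''\geq 0$ and $(\alpha_s)'\in[0,1]$ together with \eqref{Equ:sub-laplace-alpha} then yield the pointwise bound
$$
-\Delta_\G\alpha_s(u) \;\geq\; (\alpha_s)'(u)\,f \;\geq\; -f^-\qquad\text{on } D,
$$
along with the equality $-\Delta_\G\alpha_s(u)=f$ on $A_s:=\{u\leq s\}$. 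These two facts drive everything.

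For $f\in L^1(D)$, fix a smooth $D'\Subset D$ with $S\subset D'$ and $\partial D'\subset D\setminus S$. For $s$ exceeding $\sup_{\partial D'}u$, $\alpha_s(u)=u$ near $\partial D'$, so the Green identity \eqref{Equ:Green-thm} gives
$$
C_0 \;:=\; \int_{D'}(-\Delta_\G\alpha_s(u))\,dx \;=\; -\int_{\partial D'}\nu\cdot\nabla_\G u\,dS,
$$
a finite constant independent of $s$. Splitting into $A_s\cap D'$ (integrand $=f$) and $D'\setminus A_s$ (integrand $\geq -f^-$) produces
$$
\int_{A_s\cap D'} f^+\,dx \;\leq\; C_0+2\|f^-\|_{L^1(D')}\qquad\text{uniformly in } s.
$$
Since $A_s\cap D'\nearrow D'\setminus S$ as $s\to\infty$ and $S$ has Lebesgue measure zero by Lemma \ref{Lem:singularity-2}, monotone convergence gives $f^+\in L^1(D')$; the smoothness of $f$ in $D\setminus S$ near $\partial D$ then upgrades this to $f\in L^1(D)$.

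For the remaining claims I first verify $u\in L^1_{loc}(D)$ so that $-\Delta_\G u$ is a bona fide distribution on $D$. Set $\psi:=\Gamma_{\Delta_\G}*(f^-\mathbf{1}_D)$, a locally integrable lower semicontinuous function satisfying $-\Delta_\G\psi=f^-$ distributionally. The pointwise bound above then forces $\alpha_s(u)+\psi$ to be $\Delta_\G$-superharmonic in the distributional sense on $D$, and as $s\to\infty$ this family increases pointwise to $u+\psi$, which is lower semicontinuous and finite almost everywhere (both summands are finite on the full-measure set $D\setminus S$). The standard monotone-limit principle for $\Delta_\G$-superharmonic functions (cf. Theorem \ref{Thm:Riesz-l-superh} and \cite[Chapter 9]{BLU}) then yields $u+\psi\in L^1_{loc}(D)$, whence $u\in L^1_{loc}(D)$.

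With $u\in L^1_{loc}(D)$, dominated convergence (dominated by $|u|+M$ locally for $s\geq M$) lets me pass to the limit in
$$
\int\alpha_s(u)(-\Delta_\G\phi)\,dx \;=\; \int\bigl[-(\alpha_s)''|\nabla_\G u|^2+(\alpha_s)'f\bigr]\phi\,dx \;\geq\; \int(\alpha_s)'f\phi\,dx
$$
for any $\phi\in C_c^\infty(D)$ with $\phi\geq 0$; a second application of dominated convergence on the right (dominated by $|f|\,\|\phi\|_\infty$) produces $\langle-\Delta_\G u,\phi\rangle\geq\int f\phi\,dx$. Hence $-\Delta_\G u-f\,dx$ is a nonnegative distribution on $D$, and therefore a nonnegative Radon measure; coinciding classically with zero on $D\setminus S$, it is supported in $S$. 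This simultaneously establishes that $-\Delta_\G u$ is a Radon measure on $D$ and that $(-\Delta_\G u)|_S\geq 0$. The main obstacle is the $L^1_{loc}$ step, without which $-\Delta_\G u$ has no distributional meaning and the dominated-convergence argument collapses; its resolution hinges on the monotone-limit principle for distributional $\Delta_\G$-superharmonic functions in the Carnot-group framework.
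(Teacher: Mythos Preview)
Your proof is correct and follows the same overall scheme as the paper: both arguments hinge on the smooth truncation $\alpha_s(u)$, the identity \eqref{Equ:sub-laplace-alpha}, Green's formula \eqref{Equ:Green-thm}, and passage to the limit $s\to\infty$. The one substantive difference is in the $L^1_{loc}$ step. The paper isolates this as a separate Lemma~\ref{Lem:l-1}, writing the Riesz decomposition $\alpha_s(u)=\Gamma_{\Delta_\G}*f_s+h_s$, bounding the potential part via Fubini and the harmonic part via mean-value and maximum principles, and then applying Fatou. You instead observe that $\alpha_s(u)+\psi$ (with $\psi=\Gamma_{\Delta_\G}*(f^-\mathbf{1}_D)$) is $\Delta_\G$-superharmonic and invoke the monotone-limit principle to conclude that $u+\psi$ is superharmonic and hence locally integrable. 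Both routes are sound; yours is a bit more conceptual while the paper's is more hands-on. Your final step, deducing that $-\Delta_\G u-f\,dx$ is a nonnegative distribution supported in $S$, is slightly cleaner than the paper's direct estimate of $|\langle-\Delta_\G u,\phi\rangle|$. One minor point: your domination ``$|u|+M$'' is imprecise; the clean bound is $|\alpha_s(u)|\leq 2|u|$, which suffices once $u\in L^1_{loc}$.
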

 
\begin{proof} The proof follows the proofs of \cite[Lemma 3.3 and 3.4]{MQ-2025} (see also \cite[Lemma 1.5]{CY}).
The important point here is that the function $\alpha_s(u)$ is constant near the polar set $S$ since $u(x)\to\infty$ as $x$ approaches $S$. 
$s$ is going to be large, for example, $s > \max \{u(x): x\in \partial D\}$. After integrating both sides and using \eqref{Equ:Green-thm}, 
we have
$$
\int_{\partial D} \nu \cdot \nabla_\G u ds = \int_D (-\Delta_\G \alpha_s (u)) dx = 
\int_D (-(\alpha_s)'' |\nabla_\G u|^2 + (\alpha_s)' f) dx,
$$
which implies
\begin{equation}\label{Equ:integral-by-part}
\aligned
\int_D (-(\alpha_s)'' |\nabla_\G u|^2 & + (\alpha_s)' f^+) dx = \int_{\partial D} \nu\cdot\nabla_\G u ds + \int_D f^- (\alpha_s)' dx\\
&  \leq \int_{\partial D} \nu\cdot\nabla_\G u ds + \int_{D} f^-dx.
\endaligned
\end{equation}
Then, using Fatou's Lemma, from \eqref{Equ:integral-by-part}, we have
$$
\int_D f^+ dx = \int_D \lim_{s\to\infty} (\alpha_s)' f^+ dx \leq \int_{\partial D} \nu\cdot\nabla_\G u ds + \int_{D} f^-dx \leq C < \infty.
$$
Therefore $f\in L^1(D)$. In fact, from \eqref{Equ:integral-by-part}, we also have 
\begin{equation}\label{Equ:Delta-G}
\int_D |\Delta_\G \alpha_s(u)|dx  
\leq 2\int_{\partial D} \nu\cdot\nabla_\G u ds +3 \int_{D} f^-dx.
\end{equation}
By the definition, we want to consider, for
$\phi\in C^\infty_c(D)$,
\begin{equation}\label{Equ:passing-limit}
\aligned
\int & (-\Delta_\G u)\phi dx  = \int u (-\Delta_\G \phi) dx = \int \lim_{s\to\infty} \alpha_s(u) (- \Delta_\G\phi) dx \\
& = \lim_{s\to\infty}  \int \alpha_s(u) (-\Delta_\G\phi ) dx = \lim_{s\to\infty}\int (-\Delta_\G \alpha_s(u)) \phi dx,
\endaligned
\end{equation}
where one applies the dominant convergence theorem, using $u\in L^1(D)$ with the help from Lemma \ref{Lem:l-1} below. 
Hence, from \eqref{Equ:Delta-G} and \eqref{Equ:passing-limit}, we have
$$
|\int (-\Delta_\G u)\phi dx| \leq (2\int_{\partial D} \nu\cdot\nabla_\G u ds + 3\int_{D} f^-dx) \|\phi\|_{C^0},
$$
which implies $-\Delta_\G u$ is a Radon measure. Finally, when $\phi\geq 0$, from \eqref{Equ:passing-limit}, we have
$$
\aligned
\int (-\Delta_\G u)\phi dx & = \lim_{s\to\infty} \int (-\Delta_\G\alpha_s(u)) \, \phi dx
=  \lim_{s\to\infty}  \int (-(\alpha_s)'' |\nabla_\G u|^2 + (\alpha_s)' f)\, \phi dx\\
& \geq  \lim_{s\to\infty}  \int (\alpha_s)' f \, \phi dx\geq \int f\phi dx \to 0
\endaligned
$$
as $m(\text{supp}(\phi)\setminus S)\to 0$ with $\|\phi\|_{C^0} = 1$. This exactly means $(-\Delta_\G u)|_S \geq 0$.
\end{proof}

To complete the proof of Lemma \ref{Lem:singularity-3} we need 
\begin{lemma}\label{Lem:l-1} Under the assumption of Lemma \ref{Lem:singularity-3} we have $u\in L^1(D)$.
\end{lemma}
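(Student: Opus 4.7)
The plan is to dualize the uniform $L^1$-bound \eqref{Equ:Delta-G} on $\Delta_\G\alpha_s(u)$ (just derived in the proof of Lemma \ref{Lem:singularity-3} before this lemma is invoked) against the Green's function for $-\Delta_\G$ on a suitable regular subdomain containing $S$.

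First I would reduce to showing $u\in L^1(D'')$ on a single subdomain $D''$ containing $S$: indeed, since $u$ is continuous on $D\setminus S$ with $u(x)\to\infty$ near the compact set $S$, it is bounded below on $D$, so after adding a constant we may assume $u\geq 0$; and $u$ is bounded on $D\setminus D''$ whenever $D''\Subset D$ contains a neighborhood of $S$. I would choose $D''$ to be a bounded $\Delta_\G$-regular open set with $S\subset D''\Subset D$, whose existence is guaranteed by \cite[Chapter 8]{BLU}; let $G_{D''}(x,y)$ denote its Green's function for $-\Delta_\G$, and set
\begin{equation*}
\psi(y):=\int_{D''}G_{D''}(x,y)\,dx,
\end{equation*}
which satisfies $-\Delta_\G\psi=1$ in $D''$, $\psi|_{\partial D''}=0$, and $0\leq\psi\leq C$ by the maximum principle.

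For $s>\sup_{\partial D''}u$, the truncation $\alpha_s(u)$ is smooth on all of $D''$ (being constant $=2s$ in a neighborhood of $S$ by the blow-up hypothesis $u\to\infty$) and equals $u$ on $\partial D''$. Its Green's representation reads
\begin{equation*}
\alpha_s(u)(x)=h(x)+\int_{D''}G_{D''}(x,y)(-\Delta_\G\alpha_s(u))(y)\,dy,
\end{equation*}
where $h$ is the $\Delta_\G$-harmonic function on $D''$ with boundary data $u|_{\partial D''}$, independent of $s$ since $\alpha_s(u)|_{\partial D''}=u|_{\partial D''}$ for $s$ large. Integrating over $D''$ and applying Fubini gives
\begin{equation*}
\int_{D''}\alpha_s(u)\,dx=\int_{D''}h\,dx+\int_{D''}\psi(y)(-\Delta_\G\alpha_s(u))(y)\,dy,
\end{equation*}
and the right-hand side is bounded uniformly in $s$ by $\|h\|_\infty|D''|+\|\psi\|_\infty\int_{D''}|\Delta_\G\alpha_s(u)|\,dx$, which is finite by \eqref{Equ:Delta-G} together with the boundedness of $h$ and $\psi$. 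Since $\alpha_s(u)\to u$ pointwise on $D''\setminus S$ as $s\to\infty$ and $S$ has Lebesgue measure zero by Lemma \ref{Lem:singularity-2}, Fatou's lemma yields $\int_{D''}u\,dx\leq C<\infty$, and hence $u\in L^1(D)$.

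The main technical obstacle will be the construction of the $\Delta_\G$-regular subdomain $D''$ together with the existence, boundedness, and Green's representation for $G_{D''}$ and $\psi$: in the sub-Riemannian setting boundary regularity may fail at characteristic points, but the existence of sufficiently many $\Delta_\G$-regular bounded open sets with the requisite Green-function properties is provided by the classical potential theory for sub-Laplacians in \cite[Chapters 7--9]{BLU}. The blow-up condition $u(x)\to\infty$ near $S$ is used precisely once, to guarantee that $\alpha_s(u)$ is smooth across $S$ and so admits the Green's representation on the whole of $D''$ without needing to interpret $-\Delta_\G u$ as a measure — which is the conclusion of Lemma \ref{Lem:singularity-3} itself.
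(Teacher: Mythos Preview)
Your proof is correct and follows the same high-level strategy as the paper: use the uniform $L^1$ bound \eqref{Equ:Delta-G} on $\Delta_\G\alpha_s(u)$, represent $\alpha_s(u)$ as a potential plus a $\Delta_\G$-harmonic function, bound both pieces uniformly in $s$, and conclude by Fatou. The difference is in the choice of potential kernel. The paper works directly on $D$ with the global fundamental solution $\Gamma_{\Delta_\G}$, so the harmonic remainder $h_s$ genuinely depends on $s$; it then controls $h_s$ uniformly by observing that $\alpha_s(u)=u$ on a fixed set $D\setminus D_{s_0}$, which forces $\int_{D\setminus D_{s_0}}|h_s|$ to be bounded, and then invokes the mean value theorem \cite[Theorem 5.6.1]{BLU} and weak maximum principle \cite[Theorem 5.13.4]{BLU} to propagate this to a sup-bound on $h_s$. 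You instead pass to a $\Delta_\G$-regular subdomain $D''\Supset S$ and use its Green's function, so the harmonic part carries the $s$-independent boundary data $u|_{\partial D''}$ and the uniformity in $s$ is automatic. Your route is a bit more direct and avoids the mean-value/maximum-principle step, at the cost of importing the existence of a regular $D''$ and the Green's representation on it; the paper's route stays with the explicit kernel $\Gamma_{\Delta_\G}$ throughout, consistent with the rest of Section \ref{Sect:homo-carnot}. Either way, the appeal to Lemma \ref{Lem:singularity-2} to ensure $m(S)=0$ in the Fatou step is correct but in fact dispensable, since $\alpha_s(u)\to u$ pointwise everywhere (including on $S$, where both sides are $+\infty$).
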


\begin{proof}  Let
$$
-\Delta_\G \alpha_s(u) = f_s.
$$
Then, from \eqref{Equ:Delta-G}, we know $\|f_s\|_{L^1(D)}$ is bounded uniformly with respect to $s$. 
Thanks to Theorem \ref{Thm:Riesz-l-superh} (\cite[Theorem 9.4.4]{BLU}), we may write
$$
\alpha_s(u) = \int_D f_s(y) \Gamma_{\Delta_\G} (y^{-1}\circ x) dy + h_s(x)
$$
for a function $h_s(x)$ that is $\Delta_\G$-harmonic in $D$, considering $f_s = f_s^+ - f_s^-$. First, using Fubini Theorem, we have
\begin{equation}\label{Equ:potential-l-1}
\int_D |\int_D f_s(y)\Gamma_{\Delta_\G}(y^{-1}\circ x) dy|dx \leq \int_Ddy |f_s(y)| \int_D dx \Gamma_{\Delta_\G}(y^{-1}\circ x) \leq C
\end{equation}
for some $C>0$ that is independent of $s$. Meanwhile, $\alpha_s(u) = u$ in $D\setminus D_{s_0}$, where 
$D_s=\{x\in D: u(x) > s\}\subset D$ when $s> s_0$ and $s_0$ is sufficiently large and fixed.
$$
\int_{D\setminus D_{s_0}} |h_s(x)|dx \leq \int_{D\setminus D_{s_0}} |u(x)| dx + C
$$
by \eqref{Equ:potential-l-1}, which implies $h_s$ is bounded in $D$ uniformly in $s$
in the light of the mean value theorem \cite[Theorem 5.6.1]{BLU} and 
the weak maximum principle \cite[Theorem 5.13.4]{BLU}.

Therefore, for a constant $C>0$, independeny of $s$, we got $\int_D |\alpha_s(u)|dx \leq C$.
Thus, by Fatou's Lemma, the proof of this lemma is complete.
\end{proof}

We remark here that the Radon measure $-\Delta_\G u$ in Lemma \ref{Lem:singularity-3} has the singular part
and therefore is not absolutely continuous with respect to Lebesgue measure on $\G$. Because 
$-\Delta_\G u = \mu$ in $D$ in distributional sense, while $-\Delta_\G u = f$ in $D\setminus S$, even though 
$f\in L^1(D)$. Now we are ready to present our proof of Theorem \ref{Thm:app-CR-2.0}.

\begin{proof}[Proof of Theorem \ref{Thm:app-CR-2.0}]
To summarize what we have from Lemma \ref{Lem:singularity-1} - \ref{Lem:singularity-3}, for $u$ in Theorem \ref{Thm:app-CR-2.0}, we may assume
 $$
 -\Delta_\mathsf{N} u = \mu \text{ in $D$}
 $$
 for some Radon measure $\mu$. Therefore 
 \begin{equation}\label{Equ:represented-potential}
 u (x) \leq  C\int_\Omega \frac {d\mu^+(y)}{d_{\Delta_\mathsf{N}}(y^{-1}\circ x)^{Q-2}} + h(x)
 \end{equation}
 where $\mu^+$ is nonnegative part of $\mu$ and $h(x)$ is $\Delta_\mathsf{N}$-harmonic in 
 $D$. Thus, Theorem \ref{Thm:app-CR-2.0} follows from Theorem \ref{Thm:potential-carnot}.
 \end{proof}
 
 \subsection{On quotients $\Omega(\Gamma)/\Gamma$ for convex cocompact subgroups $\Gamma$}\label{Subsect:subgroup}
 
 In this subsection we want to apply Theorem \ref{Thm:app-CR} to the quotient $\Omega(\Gamma)/\Gamma$ for 
 convex cocampact subgroups of real semisimple groups $\mathfrak{G}$ of rank one. 
 
 
 \subsubsection{Hyperbolic spaces and rank one groups}\label{Subsubsect:hyperbolic}
 Recall that rank one symmetric spaces of non-compact type, besides real hyperbolic spaces $\mathbb{H}^{n+1}$, are
 \begin{itemize}
 \item compex hyperbolic spaces $\H^{n+1}_\bC$;
 \item quaternionic hyperbolic spaces $\H^{n+1}_\mathbb{Q}$;
 \item the octonionic hyperbolic space $\H^2_\mathbb{O}$.
 \end{itemize}
Their isometry groups $\mathfrak{G}$ are the rank one groups $SU(n+1, 1)$, $Sp(n+1, 1)$, and $\mathbb{F}_{4(-20)}$ respectively. 
Let $\mathbb{F}$ be either the complex numbers $\bC$ or the quaternions $\H$ or the octonions $\mathbb{O}$ for convenience. Recall that
each hyperbolic space $\H_\mathbb{F}$ has a ball model and the boundary at infinity $\partial_\infty\H_\mathbb{F}$ is a sphere. 
When considering $\partial_\infty\H_\mathbb{F}$ as the Martin boundary of hyperbolic space $\H_\mathbb{F}$, 
$\Gamma$ naturally acts on $\partial_\infty\H_\mathbb{F}$ too, which is particularly simple and clear for rank one cases.
Then, for a discrete subgroup $\Gamma\subset\mathfrak{G}$ that acts discontinuously on $\H_\mathbb{F}$, 
for any $q\in \H_\mathbb{F}$, we consider the set of limit points of the orbit $\Gamma q$
$$
L(\Gamma) = \overline{\Gamma q}\cap \partial_\infty\H_\mathbb{F}.
$$
$L(\Gamma)$ turns out to be a closed invariant subset that is independent of $q$. Let
$$
\Omega(\Gamma) = \partial_\infty\H_\mathbb{F}\setminus L(\Gamma).
$$
Then, for a discontinuous discrete subgroup $\Gamma\subset \mathfrak{G}$, 
$\Omega(\Gamma)$ turns out to be the maximum open invariant subset in $\partial_\infty\H_\mathbb{F}$. $\Gamma$ is said to
be convex cocompact if the quotient $\Omega(\Gamma)/\Gamma$ is a compact manifold. Clearly $\Omega(\Gamma)/\Gamma$ inherits the 
standard CR structure on $\partial_\infty\H_\mathbb{F}$, which is usually referred to as being spherical or locally conformally flat. 

  
 \subsubsection{Cayley transforms}\label{Subsect:cayley}
To apply Theorem \ref{Thm:app-CR} in these cases, we need the Cayley transforms, which are the general stereographic projections 
(or the inverses) from the round spheres with a point removed to Euclidean spaces.
The full Cayley transform between two models of rank one groups $\mathfrak{G}$: 
Siegel domain model and unit ball model, is well defined, for example, in \cite[Section 3]{CDKR-98}. 
The contact structure changes conformally and therefore the CR structure remains the same under Cayley transform
$$
\C: \mathsf{N} =  \mathbb{F}^n\times \text{Im}\,\mathbb{F}\to \partial_\infty\H_\mathbb{F}\subset \mathbb{F}^n\times\mathbb{F}
$$
We refer readers to \cite[Appendix A]{FL}, \cite[(2.2)]{CLZ-Q}, and \cite[(2.1)]{CLZ-O} for the explicit Cayley transform $\C$ in each case.  
They are all invertible from the contact sphere with the south pole $S=(0, -1)$ removed. Assume, without loss of generality,
the south pole is in $\Omega(\Gamma)$. Therefore we have
$$
\C^{-1} (\Omega(\Gamma)\setminus\{(0, -1)\}) \subset \mathsf{N} \text{ and } \C^{-1}(L(\Gamma))\subset \mathsf{N}, 
$$
where $\C^{-1}(L(\Gamma))$ is compact in $\mathsf{N}$. 

\subsubsection{Conculsion}

In the light of Lemma \ref{Lem:aubin} and Theorem \ref{Thm:app-CR}, we have 
\begin{corollary}\label{Cor:subgroup} Let $\mathsf{N}$  be the H-type group that is the nilpotent part of the Iwasawa 
decomposition of a rank one group $\mathfrak{G}$ as in Theorem \ref{Thm:app-CR}. And let 
$\Gamma$ be a convex cocompact subgroup of $\mathfrak{G}$. 
Suppose that the Yamabe type constant $\lambda(\Omega(\Gamma)/\Gamma) \geq 0$ in \eqref{Equ:yamabe-type}. Then
\begin{equation}
dim_{\mathcal{H}}(L(\Gamma)) \leq \frac {Q-2}2,
\end{equation}
where $Q$ is the homogeneous dimension of the group $\mathsf{N}$.
\end{corollary}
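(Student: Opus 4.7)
The plan is to combine the Aubin-type existence result (Lemma \ref{Lem:aubin}) with Theorem \ref{Thm:app-CR} by transporting the geometry from the quotient $\Omega(\Gamma)/\Gamma$ to a bounded open subset of $\mathsf{N}$ via the Cayley transform $\C$ set up in Section \ref{Subsect:cayley}. Since $\lambda(\Omega(\Gamma)/\Gamma,[\theta_{\mathrm{sph}}],\mathbb{I}) \geq 0$, Lemma \ref{Lem:aubin} produces a conformal contact representative $\hat\theta$ in the class with $R_{\hat\theta} \geq 0$. Because $\Omega(\Gamma)/\Gamma$ is compact, $\hat\theta$ lifts to a $\Gamma$-invariant contact form on $\Omega(\Gamma)$, which we still denote $\hat\theta = w^{4/(Q-2)}\theta_{\mathrm{sph}}$ for a positive $\Gamma$-invariant function $w$ on $\Omega(\Gamma)$.

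Next I would pull everything back by the Cayley transform. Without loss of generality the south pole $S=(0,-1)$ lies in $\Omega(\Gamma)$, so $\C^{-1}(L(\Gamma))$ is a compact subset of a bounded open set $\Omega \subset \mathsf{N}$, and $\C^{-1}(\Omega(\Gamma)\setminus\{S\})\setminus \C^{-1}(L(\Gamma))$ is the domain of the pulled-back structure. Since the Cayley transform is CR (it is conformal at the level of contact structures), the pullback takes the form $\theta = u^{4/(Q-2)}\theta_0$ on $\Omega\setminus \C^{-1}(L(\Gamma))$ for some positive smooth $u$, with the same nonnegative scalar curvature $R_\theta \geq 0$ (scalar curvature is a pointwise invariant of the pseudo-hermitian/quaternionic/octonionic contact structure).

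The remaining hypothesis in Theorem \ref{Thm:app-CR} to verify is geodesic completeness of the sub-Riemannian metric associated with $\theta$ near $\C^{-1}(L(\Gamma))$. This follows from two observations: first, because the quotient $\Omega(\Gamma)/\Gamma$ is a compact CR manifold, the sub-Riemannian metric induced by $\hat\theta$ on it is complete, and by lifting, the metric induced by $\hat\theta$ on $\Omega(\Gamma)$ is complete away from $L(\Gamma)$; second, the Cayley transform is a smooth diffeomorphism between $\partial_\infty \H_\mathbb{F}\setminus\{S\}$ and $\mathsf{N}$ that is conformal on contact structures, so completeness is preserved near $\C^{-1}(L(\Gamma))$. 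With this in hand, Theorem \ref{Thm:app-CR} (or its stronger Theorem \ref{Thm:app-CR-2.0}) immediately yields
\[
dim_{\mathcal{H}_{\Delta_\mathsf{N}}}\bigl(\C^{-1}(L(\Gamma))\bigr) \leq \frac{Q-2}{2}.
\]

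Finally I would translate the dimension estimate back to $L(\Gamma)\subset \partial_\infty\H_\mathbb{F}$. On compact subsets away from the south pole, $\C$ is a smooth diffeomorphism compatible with the contact structures, so it is bilipschitz between the $\Delta_\mathsf{N}$-gauge on $\mathsf{N}$ and the intrinsic CR gauge on $\partial_\infty \H_\mathbb{F}\setminus\{S\}$; hence Hausdorff dimensions of $\C^{-1}(L(\Gamma))$ and $L(\Gamma)$ with respect to the respective intrinsic gauges agree. The step I expect to be most delicate is the verification that the Cayley pullback of the lifted $\hat\theta$ really produces a form of the shape $u^{4/(Q-2)}\theta_0$ with completeness near $\C^{-1}(L(\Gamma))$ in the strict sense demanded by Theorem \ref{Thm:app-CR}, since one must carefully separate the behavior near $\C^{-1}(L(\Gamma))$ (where completeness is inherited from cocompactness of the $\Gamma$-action on $\Omega(\Gamma)$) from the behavior near the puncture at $\C^{-1}(S)$, which lies in the regular part and plays no role in the completeness hypothesis.
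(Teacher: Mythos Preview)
Your proposal is correct and follows essentially the same route as the paper: invoke Lemma~\ref{Lem:aubin} to obtain a representative with nonnegative scalar curvature, lift to $\Omega(\Gamma)$, pull back through the Cayley transform of Section~\ref{Subsect:cayley}, verify geodesic completeness near $\C^{-1}(L(\Gamma))$ from the cocompactness of the $\Gamma$-action, and then apply Theorem~\ref{Thm:app-CR}. The paper's proof is terser but identical in substance; its only additional remark is that $dim_{\mathcal{H}}$ with respect to the control distance coincides with $dim_{\mathcal{H}_{\Delta_{\mathsf{N}}}}$ because all homogeneous norms on $\mathsf{N}$ are equivalent, which is the same point you make via bilipschitzness of $\C$ on compacta.
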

\begin{proof}
 For the proof, we only need to point out that the conformal sub-Riemannian metric on $\C^{-1}(\Omega(\Gamma)\setminus\{(0, -1)\})$ 
 induced from that on
$\Omega(\Gamma)/\Gamma$, which is the one with nonnegative scalar curvature and conformal to the standard one, 
is geodesically complete when approaching to $\C^{-1}(L(\Gamma))$, because
$\Gamma$ makes infinitely many copies of compact $\Omega(\Gamma)/\Gamma$ in $\Omega(\Gamma)$. 
We also want to remark that the Hausdorff dimensions $dim_{\mathcal{H}}$ with respect to the control distance of $\mathsf{N}$ is the 
same as the Hausdorff dimensions $dim_{\mathcal{H}_{\Delta_\mathsf{N}}}$ with respect to the $\Delta_\mathsf{N}$-gauge on $\mathsf{N}$.
\end{proof}

Corollary \ref{Cor:subgroup} is the extension to all rank one groups $\mathfrak{G}$ from \cite[Proposition 2.4]{SY} in conformal groups. 
\cite{SY} is a seminal paper on the study of locally conformally flat manifolds through scalar curvature equations and
Yamabe problems.  
\cite{CCY} has studied the uniformization of spherical CR manifolds, using \eqref{Equ:scalar-CR}, in the spirit of \cite{SY}. See also works on Nayatani
metrics in \cite{Nay-c, Nay-h, WW, SW-n, SW}.


\noindent$\mbox{}^\dag$School of Mathematical Science and LPMC, Nankai University, Tianjin, China; \\e-mail: 
msgdyx8741@nankai.edu.cn 
\vspace{0.2cm}

\noindent $\mbox{}^\ddag$ Department of Mathematics, University of California, Santa Cruz, CA 95064; \\
e-mail: qing@ucsc.edu

\end{document}